\newtheorem{theorem}{Theorem}[section]
\newtheorem{corollary}[theorem]{Corollary}
\newtheorem{definition}[theorem]{Definition}
\newtheorem{remark}[theorem]{Remark}
\newtheorem{lemma}[theorem]{Lemma}
\newtheorem{proposition}[theorem]{Proposition}
\newenvironment{proof}[1][Proof]{\textbf{#1.} }{\ \rule{0.5em}{0.5em}}
\newcommand{\diff}{\mathrm{Diff}}
\newcommand{\di}{\mathrm{div}}
\newcommand{\fp}{\mathbb{F}_p}
\newcommand{\fq}{\mathbb{F}_q}
\newcommand{\fqa}{\mathbb{F}_{q^a}}
\newcommand{\fqj}{\mathbb{F}_{q^j}}
\newcommand{\fqk}{\mathbb{F}_{q^k}}
\newcommand{\fl}{\mathbb{F}_\ell}
\newcommand{\flc}{\overline{\mathbb{F}}_\ell}
\newcommand{\lcm}{\mathrm{lcm}}
\newcommand{\tr}{\mathrm{Tr}}
\newcommand{\z}{\mathbb{Z}}
\begin{document}
\title{Towers of Function Fields over Non-prime Finite Fields}
\author{Alp Bassa\footnote{Alp Bassa is supported by T\"{u}bitak Proj. No. 112T233. Part of this work was done while Alp Bassa was at CWI, Amsterdam (Netherlands).}, Peter Beelen\footnote{Peter Beelen gratefully acknowledges the support from the Danish National Research Foundation and the National Science Foundation of China (Grant No.11061130539) for the Danish-Chinese Center for Applications of Algebraic Geometry in Coding Theory and Cryptography.}, Arnaldo Garcia\footnote{Arnaldo Garcia was partially supported by CNPq (Brazil), Sabanc\i \  University and T\"{u}bitak (Turkey).}, and Henning Stichtenoth\footnote{Henning Stichtenoth was partially supported by T\"{u}bitak Proj. No. 111T234.}}
\date{\empty}

\maketitle

\begin{abstract} Over all non-prime finite fields, we construct some recursive towers of function fields with many rational places. Thus we obtain a substantial improvement on all known lower bounds for Ihara's quantity $A(\ell)$, for $\ell = p^n$ with $p$ prime and  $n>3$ odd. We relate the explicit equations to Drinfeld modular varieties.
\end{abstract}

\section{Introduction}\label{sec:one}

Investigating the number of points on an algebraic curve over a finite field is a classical
subject in Number Theory and Algebraic Geometry. The origins  go back to Fermat, Euler and
Gauss, among many others. The basic result is A. Weil's theorem which is equivalent
to the validity of Riemann's Hypothesis in this context. New impulses  came from Goppa's
construction of good codes from curves with many rational points, and also from applications
to cryptography. For information, we refer to \cite{gee,niexin}.

\vspace{.2cm} \noindent
One of the main open problems in this area of research is the determination of Ihara's quantity $A(\ell)$
for non-square finite fields; i.e., for cardinalities $\ell = p^n$ with $p$
prime and $n$ odd.  This quantity controls the asymptotic behaviour of the number of $\fl$-rational points
(places) on algebraic curves (function fields) as the genus increases. This is the topic of our paper.

\vspace{.2cm} \noindent
Let $F$ be an algebraic function field of one variable over $\fl$, with the field $\fl$  being algebraically
closed in $F$. We denote
$$N(F)=\text{number of }\mathbb F_\ell \text{-rational places of }F, \ \makebox{ and } \
g(F)=\text{genus of }F.$$
The  Hasse--Weil upper bound states that
$$N(F)\leq 1+\ell+2\sqrt{\ell} \cdot g(F).$$
This upper bound was improved by Serre \cite{ser1} who showed that the factor $2\sqrt{\ell}$ can
be replaced above by its integer part $\lfloor 2\sqrt{\ell} \rfloor$.

\vspace{.2cm} \noindent
Ihara \cite{iha2} was the first to realize that the Hasse--Weil upper bound becomes weak when the genus $g(F)$ is large
with respect to the size $\ell$ of the ground field $\fl$. He introduced the  quantity

\begin{equation} \label{al} A(\ell)=\limsup_{g(F)\to \infty} \frac{N(F)}{g(F)}, \end{equation}
where the limit is taken over all function fields $F/\fl$ of genus $g(F)>0$. By Hasse--Weil it holds that $A(\ell) \le
2 \, \sqrt{\ell}$, and Ihara showed that $A(\ell) <  \sqrt{2\ell}$.

\vspace{.2cm} \noindent
The best upper bound known is due to Drinfeld--Vl\u{a}du\c{t} \cite{drivla}. It states that
\begin{equation}\label{dv}
A(\ell)\leq \sqrt{\ell}-1, \text{ for any prime power } \ell .\end{equation}
If $\ell$ is a square, the opposite inequality $A(\ell) \ge \sqrt{\ell} -1$ had been shown earlier by
Ihara using the theory of modular curves (see \cite{iha1}). Hence
\begin{equation}\label{dvequality}
A(\ell) = \sqrt{\ell} -1 \, , \ \makebox{ when $\ell$ is a square.}
\end{equation}
\noindent
For all other cases when the cardinality $\ell$ is a non-square, the exact value of the quantity $A(\ell)$ is not known.
Tsfasman--Vl\u{a}du\c{t}--Zink \cite{tvz} used Equation\,(\ref{dvequality}) to prove the existence of long linear codes with relative parameters above the Gilbert--Varshamov bound, for finite fields of square cardinality $\ell = q^2 $ with $q \ge 7$. They also gave a proof of Equation\,(\ref{dvequality}) in the cases $\ell=p^2$ or $\ell=p^4$ with $p$ a prime number.

\vspace{.2cm} \noindent
To investigate  $A(\ell)$ one introduces the notion of (infinite) towers of $\mathbb F_\ell$-function fields:
$$\mathcal F=(F_1 \subseteq F_2 \subseteq F_3\subseteq \ldots \subseteq F_i\subseteq \ldots),$$
where all $F_i$ are function fields over $\fl$, with $\fl$ algebraically closed in $F_i$, and $g(F_i) \to \infty$
as $i \to \infty$. Without loss of generality one can assume that all extensions $F_{i+1}/F_i$ are  separable. As follows from Hurwitz' genus formula, the limit below exists (see \cite{jnt}) and it is called the limit
of the tower $\mathcal F$:
$$\lambda(\mathcal F):=\lim_{i\to \infty} \frac{N(F_i)}{g(F_i)}.$$

\noindent Clearly, the limit of any tower $\mathcal F$ over $\fl$ provides a lower bound for $A(\ell)$; i.e.,
$$ 0 \le \lambda (\mathcal F) \le A(\ell), \ \makebox{ for any $\fl$-tower } \mathcal F .
$$
So one looks for towers with big limits in order to get good lower bounds for Ihara's quantity.
Serre \cite{ser1} used Hilbert classfield towers to show that for all prime powers $\ell$,
\begin{equation} A(\ell) > c \cdot \log_2(\ell) \ , \ \makebox{ with an absolute constant $c > 0$ }.
\end{equation}
One can take $c = 1/96$, see \cite[Theorem 5.2.9]{niexin}. When $\ell = q^3$ is a cubic power, one has
the lower bound
\begin{equation}\label{aq3} A(q^3)\geq \frac{2(q^2-1)}{q+2}, \text{ for any prime power }q.\end{equation}
When $q=p$ is a prime number, this  bound was obtained by Zink \cite{zin}  using degenerations of modular surfaces.
The proof of Equation\,(\ref{aq3}) for general $q$ was given by Bezerra, Garcia and Stichtenoth \cite{bezgarsti}
using recursive towers of function fields; i.e.,  towers which are given in a recursive way by explicit
 polynomial equations. The concept of recursive towers turned out to be very fruitful for constructing towers with a large limit.

\vspace{.2cm} \noindent
An $\fl$-tower $\mathcal F = (F_1 \subseteq F_2 \subseteq F_3 \subseteq \ldots )$
 is recursively defined by $f(X,Y)\in \mathbb F_\ell[X,Y]$ when
\begin{itemize}
\item [(i)]$F_1=\mathbb F_\ell (x_1)$ is the rational function field, and
\item [(ii)] $F_{i+1}=F_i(x_{i+1})$ with $f(x_i,x_{i+1})=0$, for all $i\geq 1$.
\end{itemize}

\noindent
For instance, when $\ell=q^2$ is a square, the polynomial (see \cite{jnt})
$$f(X,Y)=(1+X^{q-1})(Y^q+Y)-X^q \in \mathbb F_{q^2}[X,Y]$$
defines a recursive tower over $\mathbb F_{q^2}$ whose limit $\lambda (\mathcal F )=q-1$ attains
 the Drinfeld--Vl\u{a}du\c{t} bound.

\vspace{.2cm} \noindent
When $\ell=q^3$ is a cubic power one can choose the polynomial (see \cite{bezgarsti})
\begin{equation}\label{bgs} f(X,Y)=Y^q(X^q+X-1)-X(1-Y)\in \mathbb F_{q^3}[X,Y] \end{equation}
to obtain a recursive tower over $\mathbb F_{q^3}$ with limit
$
\lambda (\mathcal F) \ge {2(q^2-1)}/({q+2}) \, ;
$
this is the proof of Equation\,(\ref{aq3}) above. The  case $q=2$
of Equation\,(\ref{bgs}) is due to van der Geer--van der Vlugt \cite{geevlu}.

\vspace{.2cm} \noindent
In the particular case of a prime field, no explicit or modular tower with positive
limit is known; only variations of Serre's classfield tower method have been successful in this case, see
\cite{duumak,niexin}.

\vspace{.2cm} \noindent
All known lower bounds for $A(p^n)$, with a prime number $p$ and an odd exponent $n>3$, are rather weak, see
\cite{li,niexin}. For example, one has for $q$ odd and $n \ge 3$ prime (see \cite{lm})
\begin{equation}
A(q^n) \ge \frac{4q+4}{\lfloor \frac{3+\lfloor 2\, \sqrt{2q+2}\rfloor}{n-2}\rfloor +\lfloor 2\, \sqrt{2q+3}\rfloor} \, .
\end{equation}

\vspace{.2cm} \noindent
The main contribution of this paper is a new lower bound on $A(p^n)$ that gives a substantial improvement over all
known lower bounds, for any prime $p$ and any odd $n>3$. For large $n$ and small $p$, this new bound is rather
close to the Drinfeld--Vl\u{a}du\c{t} upper bound for $A(p^n)$. Moreover, it is obtained through a
recursive tower with an explicit polynomial $f(X,Y) \in \fp[X,Y]$.

\vspace{.2cm} \noindent
Our lower bound is:

\begin{theorem}\label{thm1}
Let $p$ be a prime number and $n=2m+1\ge 3$ an odd integer. Then
$$
A(p^n) \ge \frac{2(p^{m+1}-1)}{p+1+\epsilon} \ \makebox{ with } \ \epsilon = \frac{p-1}{p^m-1} \, .
$$
\end{theorem}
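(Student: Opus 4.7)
The strategy is to construct, for each odd $n=2m+1$, an explicit recursive tower $\mathcal F = (F_1\subseteq F_2 \subseteq \cdots)$ of function fields over $\mathbb F_{p^n}$, defined by a suitable polynomial $f(X,Y)\in \mathbb F_p[X,Y]$, and to show that its limit satisfies
$$
\lambda(\mathcal F) \;\ge\; \frac{2(p^{m+1}-1)}{\,p+1+\epsilon\,}.
$$
Since $\lambda(\mathcal F)\le A(p^n)$, this gives the theorem. The shape of the bound, together with the known case $n=3$ (where $m=1$ and $\epsilon=1$, reproducing (\ref{aq3})), suggests searching for $f$ such that the induced extensions $F_{i+1}/F_i$ have degree a power of $p$ and are governed, at the splitting locus, by a norm/trace map $\mathbb F_{p^n}\to \mathbb F_{p^m}$ (or $\mathbb F_{p^{m+1}}$). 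Once $f$ is found, the proof is the standard three-step scheme for recursive towers: (i) verify the tower property, (ii) exhibit a large completely splitting locus in $F_1$, (iii) bound the genus of each $F_i$ via Hurwitz.

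For step (i), one needs $\mathbb F_{p^n}$ to remain algebraically closed in every $F_i$ and $g(F_i)\to\infty$; the usual trick is to identify a single place of $F_1$ which is totally ramified throughout the tower, typically a pole or zero of $x_1$. For step (ii), one must show that a set $S\subset \mathbb P^1(\mathbb F_{p^n})$ of size $|S|=p^{m+1}-1$ satisfies: for every $\alpha\in S$, the roots $\beta$ of $f(\alpha,Y)=0$ lie in $\mathbb F_{p^n}$ and belong again to $S$. Then by induction each place of $F_1$ above $S$ splits completely in every $F_{i+1}/F_i$, giving
$$
N(F_i) \;\ge\; (p^{m+1}-1)\,[F_i:F_1].
$$
The expected defining condition for $S$ is a vanishing relation coming from the norm or trace of $\mathbb F_{p^n}/\mathbb F_{p^m}$, so that $|S|=p^{m+1}-1$ appears as the number of nonzero elements in a suitable $\mathbb F_p$-subspace of $\mathbb F_{p^n}$.

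The main obstacle is step (iii): producing the ramification bound
$$
\limsup_{i\to\infty}\frac{2g(F_i)-2}{[F_i:F_1]} \;\le\; p+1+\frac{p-1}{p^m-1}.
$$
Since $p$ is the characteristic, the ramification up the tower is wild, and one cannot simply count branch points. One must isolate a (finite) ramification locus $R\subset F_1$ outside $S$, follow each $P\in R$ through the tower, and control the different exponents by analysis of the Artin--Schreier (or more generally $p$-power) structure of $f$. The fractional term $\epsilon = (p-1)/(p^m-1) = (p-1)\sum_{k\ge 1}p^{-mk}$ is the clear geometric fingerprint of wild ramification stabilising under iteration: a single bad place is expected to produce, at level $i$, a different exponent of the form $a(p^m)^i + b(p^m)^{i-1}+\cdots$, whose contribution to $(2g_i-2)/[F_i:F_1]$ is a geometric series summing exactly to $1+\epsilon$ on top of a tame contribution of $p$. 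Making this precise requires determining the ramification groups at each step, which will be the technical heart of the argument and almost certainly occupies the bulk of the paper; the tower and splitting verifications in steps (i) and (ii) are expected to be routine once the polynomial $f$ and the set $S$ are correctly identified. Combining the three steps via $\lambda(\mathcal F)\ge 2|S|/r$ with $|S|=p^{m+1}-1$ and $r=p+1+\epsilon$ then yields the claim.
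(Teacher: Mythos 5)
Your high-level scheme (build a recursive tower, exhibit a splitting locus, bound the genus, divide) is indeed the paper's scheme, and the paper does proceed by constructing an explicit recursive tower over $\mathbb F_{p^n}$ and applying exactly that template. But your accounting of how the final bound $2(p^{m+1}-1)/(p+1+\epsilon)$ decomposes into ``splitting count over genus constant'' is wrong, and the misreading hides the actual difficulty. The numerator $p^{m+1}-1$ is an artifact of algebraically simplifying a harmonic mean; it is not the size of the splitting set. In the paper the tower is defined by the symmetric trace equation
\[
\mathrm{Tr}_j\!\left(\frac{Y}{X^{q^k}}\right)+\mathrm{Tr}_k\!\left(\frac{Y^{q^j}}{X}\right)=1
\]
(with $q=p$, $j=m$, $k=m+1$), the extensions $F_{i+1}/F_i$ have degree $p^{n-1}$, \emph{all} of $\mathbb F_{p^n}^\times$ splits completely (so $N(F_i)\ge (p^n-1)[F_i:F_1]$, a set of size $p^{2m+1}-1$, not $p^{m+1}-1$), and the genus estimate produced is
\[
\frac{2(g(F_i)-1)}{[F_i:F_1]}\;\le\;\frac{p^n-1}{p^{m+1}-1}+\frac{p^n-1}{p^m-1},
\]
which is of order $p^m+p^{m+1}$, not $p+1+\epsilon$. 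The quotient of these two large quantities is what collapses to the stated bound. With your $|S|=p^{m+1}-1$ and a genus constant of $p+1+\epsilon$, you would need a degree-per-step of roughly $p$, which is incompatible with a tower of step-degree $p^{n-1}$ and cannot reproduce the result.

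Beyond the numerics, the genuinely hard part you flag as ``determine the ramification groups'' is handled in the paper by a structural device you do not anticipate: one introduces an auxiliary element $u_i$ with $K(x_i,x_{i+1})=K(x_i,u_i)=K(u_i,x_{i+1})$, giving a cyclic Kummer extension of degree $q^n-1$ over $K(u_i)$, and then studies the induced $u$-subtower. The wild ramification over $[x_1=0]$ and $[x_1=\infty]$ is then controlled by showing (via Hensel's lemma and the notion of \emph{weakly ramified} extensions, $d=2(e-1)$) that the relevant places are $b$-bounded, with $b_0=(q^n-1)/(q^k-1)+1$ and $b_\infty=(q^n-1)/(q^j-1)+1$. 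Your picture of $\epsilon$ as a geometric series $\sum p^{-mk}$ coming from a stabilizing different-exponent recursion does not correspond to this analysis; it is an after-the-fact reading of the simplified formula, not a mechanism one can run to obtain the bound. Also, the paper does not use a totally ramified place; it exhibits a place with ramification $q^j>1$ at each step, which suffices for the tower property but is not total ramification. As it stands, your proposal is a reasonable outline of the standard strategy, but the defining polynomial, the splitting set, and the entire ramification argument --- the actual content --- are absent, and the one concrete prediction you make about the splitting-set size is incorrect.
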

\noindent
In particular $A(p^{2m+1})>p^m-1$, which shows that a conjecture by Manin \cite{geelin,man}
is false for all odd integers $n=2m+1\ge 3$.

\vspace{.2cm} \noindent
The bound of Drinfeld--Vl\u{a}du\c{}t can be written as
$$
A(p^{2m+1}) \le p^m \cdot \sqrt{p} -1\, .
$$
Fixing the prime number $p$, we get
$$
\frac{\makebox{our lower bound}}{\makebox{Drinfeld--Vl\u{a}du\c{t} bound}} \to \frac{2 \, \sqrt{p}}{p+1}\, , \
\makebox{ as } \ m \to \infty \, .
$$
For $p=2$ we have $2\, \sqrt{p}/(p+1) \approx 0.9428\ldots$, hence our lower bound is only around
$6 \, \%$ below the Drinfeld--Vl\u{a}du\c{t} upper bound, for large odd-degree extensions of the binary
field $\mathbb{F}_2$.

%

\vspace{.2cm} \noindent
Now we give the defining equations for the several recursive towers $\mathcal F$ that we consider in this paper. Let
$\fl$ be a non-prime field and write $\ell = q^n$ with $n \ge 2$. Here the integer $n$ can be even or odd. For every partition of $n$ in relatively prime parts,
\begin{equation}\label{partition}
n=j+k \ \makebox{ with } \ j \ge 1, \, k \ge 1 \ \makebox{ and } \ \gcd (j,k)=1 \,,
\end{equation}
we consider the recursive tower $\mathcal F$ over $\fl$ that is given by the equation
\begin{equation}\label{definingeq}
{\rm Tr}_j\biggl(\dfrac{Y}{X^{q^k}}\biggr) +{\rm Tr}_k\biggl(\dfrac{Y^{q^j}}{X}\biggr) = 1 \, ,
\end{equation}
where
$$
{\rm Tr}_a(T) := T + T^q + T^{q^2} + \cdots +T^{q^{a-1}} \ \makebox{ for any }\ a \in \mathbb {N} \ .
$$

\begin{theorem}\label{thm3} \  Equation\,{\rm ({\rm \ref{definingeq}})} defines a recursive tower $\mathcal F$
over $\fl$ whose limit satisfies
$$
\lambda (\mathcal F) \, \ge \, 2\, \biggl(\frac{1}{q^j-1}+ \frac{1}{q^k-1} \biggr)^{-1} ;
$$
i.e., the harmonic mean of $q^j-1$ and $q^k-1$ is a lower bound for $\lambda (\mathcal F)$.
\end{theorem}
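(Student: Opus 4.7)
The plan is to establish the lower bound on $\lambda(\mathcal F)$ via the standard decomposition $\lambda(\mathcal F) = \nu(\mathcal F)/\gamma(\mathcal F)$, where $\nu(\mathcal F) = \lim_i N(F_i)/[F_i:F_1]$ is the splitting rate and $\gamma(\mathcal F) = \lim_i g(F_i)/[F_i:F_1]$ is the genus growth. Establishing $\nu(\mathcal F) \ge q^n - 1$ together with $\gamma(\mathcal F) \le (q^n-1)(q^j+q^k-2)/(2(q^j-1)(q^k-1))$ would yield the claimed bound, since the quotient is precisely the harmonic mean of $q^j-1$ and $q^k-1$.

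The concrete first step is the change of variable $Z = Y/X^{q^k}$. A direct computation (using $X^{q^{j+k}} = X^{q^n}$) rewrites the defining equation as
$$
P(X,Z) := \sum_{l=0}^{j-1} Z^{q^l} + \sum_{l=0}^{k-1} X^{(q^n-1)q^l} Z^{q^{j+l}} = 1,
$$
which is $\fq$-linear in $Z$ of $Z$-degree $q^{n-1}$. After verifying that $P(x_i,Z) - 1$ is irreducible over $F_i$ (so that $[F_{i+1}:F_i]=q^{n-1}$ at every level), I would use this linearized form as the working model for both the splitting and the ramification analyses. The splitting is transparent in this form: for $x_i$ specialized to any $\alpha \in \fl^*$ we have $\alpha^{q^n-1}=1$, so all "variable" coefficients collapse to $1$ and $P(\alpha,Z) = {\rm Tr}_j(Z) + {\rm Tr}_k(Z^{q^j}) = {\rm Tr}_n(Z)$, the absolute trace from $\mathbb F_{q^n} = \fl$ to $\fq$. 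The equation ${\rm Tr}_n(Z)=1$ has $q^{n-1}$ solutions in $\fl$, all nonzero, so every rational place $x_1 = \alpha \in \fl^*$ splits completely throughout the tower and $\nu(\mathcal F) \ge q^n - 1$.

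The heart of the proof is the genus bound. I would identify the ramification locus of the tower in $F_1$; from the shape of $P(X,Z)$ it should reduce to the places $x_1 = 0$ and $x_1 = \infty$ and their orbits higher in the tower. At each such place I would analyze the Newton polygon of $P(x_i,Z) - 1$ and extract from it the different exponent of $F_{i+1}/F_i$ using the Hilbert formula for wildly ramified extensions defined by $\fq$-linearized polynomials. Summing these contributions through the tower with Hurwitz' formula, and exploiting the symmetry $j \leftrightarrow k$ of the defining equation, I would aim for a recursive estimate of $g(F_{i+1})$ in terms of $g(F_i)$ from which the bound on $\gamma(\mathcal F)$ follows. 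The harmonic-mean shape of the target strongly suggests that the total ramification contribution splits into a "pole" part scaling like $1/(q^j-1)$ and a "zero" part scaling like $1/(q^k-1)$, combining through the reciprocal sum.

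The main obstacle is precisely this ramification computation. The tower is wildly ramified and the coefficients of $P(x_i,Z)$ vary with $i$, so the ramification filtrations in consecutive steps do not propagate in an obvious way. The key technical maneuver I would look for is a local change of uniformizer at each ramified place that reduces $P(x_i,Z)$ to a normal form---a pure Artin--Schreier extension or a short composite of such---at which point the different can be read off directly. The coprimality hypothesis $\gcd(j,k)=1$ should enter here to prevent unwanted intermediate inseparability and to ensure that the two trace summands interact in the expected symmetric way.
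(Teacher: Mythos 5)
Your overall strategy is sound, and the two easy ingredients are correct: the change of variable $Z=Y/X^{q^k}$ gives the $\fq$-linearized model you describe, the degree $[F_{i+1}:F_i]=q^{n-1}$ is right (the paper obtains it from a cyclic Kummer description of $F/K(u)$ rather than by direct irreducibility of the linearized polynomial, but either works), and the complete-splitting argument over $x_1=\beta\in\fl^\times$ via specialization to $\tr_n(Z)=1$ is essentially identical to the paper's Proposition~\ref{prop:splitting}. Your guess that the ramification contribution should decompose into a pole part $\sim 1/(q^j-1)$ and a zero part $\sim 1/(q^k-1)$ is also exactly how it comes out (the paper's $b_\infty$ and $b_0$ in Equations~(\ref{binfty}) and (\ref{bzero})).

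The genuine gap is in the genus bound, which you reduce to ``analyze the Newton polygon of $P(x_i,Z)-1$'' and ``look for a local change of uniformizer to a normal form.'' This is where the paper spends all of Section~\ref{sec:three}, and the difficulty is structural, not computational: a place of $F_{i+1}$ over $[x_1=0]$ may be a zero of $x_1,\dots,x_m$ and a pole of $x_{m+1},\dots,x_{i+1}$ for some intermediate $m$, and at the crossover point both elementary ramifications are wild, so Abhyankar's lemma fails and a step-by-step Newton-polygon computation in the $x$-variables does not propagate (this is exactly the ``?'' in Figure~\ref{fig:seven}). The paper circumvents this by not working in the $x$-tower at all at the crucial step. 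It passes to a subtower $\mathcal E=(E_1\subseteq E_2\subseteq\cdots)$ in new variables $u_i$ with $K(u_i)=K(R_i,S_i)$ (Proposition~\ref{prop:rs}), and proves that at the relevant completions the extensions $\widehat{E}_s/\widehat{E}_1$ are \emph{weakly ramified}, i.e.\ $d=2(e-1)$, the minimal wild different. That in turn rests on two nontrivial facts proved with Hensel's lemma: the existence of a tame intermediate field $H(t)$ with $t^{q^j-1}=z^{-1}$ inside $\widehat{E}_1$ such that $\widehat{E}_1/H(t)$ is a Galois $p$-extension (Proposition~\ref{prop:EtH}), and the construction of explicit automorphisms $v\mapsto v+w_0\xi$ making each $G_{s+1}/G_s$ Galois of degree $q^j$ (Proposition~\ref{prop:G s+1/G s}). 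None of this is forced by a Newton-polygon normal form; the Galois structure is what makes the $d=2(e-1)$ bound hold uniformly across the tower, and it is the step your proposal does not supply. So the plan is plausible in outline but missing the key ideas (the $u$-subtower and the weak-ramification/Galois-descent mechanism) that make the genus estimate go through; as written, it would stall exactly at the crossover place $[x_m=0]\wedge[x_{m+1}=\infty]$.
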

\noindent
The very special case $n=2$ and $k=j=1$ of Equation\,(\ref{definingeq}) gives a recursive representation of the
first explicit tower attaining the Drinfeld--Vl\u{a}du\c{t} bound, see \cite{inv}. This particular case was
our inspiration to consider Equation\,(\ref{definingeq}).

\vspace{.2cm} \noindent
For a fixed finite field $\fl$ with non-prime $\ell$, Theorem\,\ref{thm3} may give several towers over $\fl$ with
distinct limits; this comes from two sources:
the chosen representation $\ell=q^n$ with $n\geq2$ (i.e., the choice of $q$), and
the chosen partition $n=j+k$. For a cardinality $\ell$ that is neither a prime nor a square, the best lower
bound comes from
\begin{itemize}
\item[] representing $\ell $ as $\ell = p^n$ (i.e., choose $q=p$);
\item[] writing $n \ge 3$ as $n = 2m+1$, choose the partition with $j=m$ and $k=m+1$.
\end{itemize}
The lower bound in Theorem\,\ref{thm3} in this case reads as
$$
\lambda (\mathcal F) \ge  2 \, \biggl( \frac{1}{p^m-1}+ \frac{1}{p^{m+1}-1} \biggr)^{-1} = \frac{2(p^{m+1}-1)}{p+1+\epsilon}\, ,
\ \makebox{ with } \ \epsilon =\frac{p-1}{p^m-1} \, .
$$
This is the tower that proves Theorem\,\ref{thm1}, our main result.

\vspace{.2cm}\noindent
Furthermore, we show that the curves in the tower are related to one-dimensional varieties parametrizing certain $\mathbb{F}_q[T]$-Drinfeld modules of characteristic $T-1$ and rank $n\ge 2$ together with some additional varying structure.

\vspace{.2cm} \noindent
This paper is organized as follows. In Section \ref{sec:two} we investigate the \textquoteleft basic function
field\textquoteright  \
of the tower $\mathcal F$. This is defined as $F = \fl (x,y)$ where $x,y$ satisfy Equation\,(\ref{definingeq}). In
particular, the ramification structure of the extensions $F/\fl (x)$ and $F/\fl (y)$ is discussed in detail. Section
\ref{sec:three}
is the core of our paper. Here we study the tower $\mathcal F = (F_1 \subseteq F_2 \subseteq \cdots)$ and prove Theorem\,\ref{thm3}. The principal difficulty is to show that the genus $g(F_i)$ grows \textquoteleft rather slowly\textquoteright \ as $i \to \infty$. Finally, in Section \ref{sec:four} we show that our tower $\mathcal F$ occurs quite naturally when studying Drinfeld modules of rank $n$, thus providing a modular motivation of the tower.

\vspace{0.5cm}
\noindent We hope that this paper will lead to further developments in the theories of explicit towers and of modular towers, and their relations.

\section{The basic function field}\label{sec:two}

First we introduce some notation.

\vspace{.2cm}

\begin{tabular}{l}
$p$ is a prime number, $q$ is a power of $p$, and $\ell=q^n$ with $n\ge 2$,  \\
$\fl$ is the finite field of cardinality $\ell$, and $\flc$ is the algebraic closure of $\fl$.\\
For simplicity we also denote $K=\fl$ or $\flc$.\\
For an integer $a\ge 1$, we set $\tr_a(T):=T+T^q+\cdots+T^{q^{a-1}}\in K[T].$
\end{tabular}

\begin{remark}\label{rem:traces}
\begin{enumerate}
\item[{\rm (i)}] Let $a,b\ge 1$. Then $(\tr_a\circ\tr_b)(T)=(\tr_b\circ\tr_a)(T)$.
\item[{\rm (ii)}]Let $\Omega \supseteq \fq$ be a field. The evaluation map $\tr_a:\Omega \to \Omega$ is $\fq$-linear; its kernel is contained in the subfield $\fqa \cap \Omega \subseteq \Omega$.
\item[{\rm (iii)}] Let $a,b\ge 1$ and $\gcd(a,b)=1$. Then $K(s)=K(\tr_a(s),\tr_b(s))$ for any $s \in \Omega$.
\end{enumerate}
\end{remark}
\begin{proof}
Item (ii) is clear and the proof of (i) is straightforward. Item (iii) follows by induction from the equation $$\tr_c(T)=\tr_r(T)+(\tr_d(T))^{q^r},$$ which holds whenever $c=d+r>d$.
\end{proof}

\vspace{.2cm}
\noindent We also fix a partition of $n$ into relatively prime integers; i.e., we write
\begin{equation}\label{eq:ij}
n=j+k, \ \makebox{ with integers } j,k \ge 1 \ \makebox{ and } \  \gcd(j,k)=1.
\end{equation}
Without loss of generality we can assume that
\begin{equation}\label{eq:relprime}
\makebox{ $p$ does not divide $j$.}
\end{equation}

\noindent In this section we study the function field $F=K(x,y)$, where $x,y$ satisfy the equation
\begin{equation}\label{eq:basiceq}
\tr_j\left(\frac{y}{x^{q^k}}\right)+\tr_k\left(\frac{y^{q^j}}{x}\right)=1.
\end{equation}
This \textquoteleft basic function field\textquoteright \ $F$ is the first step in
the tower $\mathcal F$ that will be considered in Section\,\ref{sec:three}. We abbreviate
\begin{equation}\label{eq:rs}
R:=\frac{y}{x^{q^k}},\  S:=\frac{y^{q^j}}{x} \ \makebox{ and } \ \alpha:=j^{-1} \in \fp.
\end{equation}
\begin{proposition}\label{prop:rs}
There exists a unique element $u\in F$ such that
\begin{equation}\label{eq:rsu}
R=\tr_k(u)+\alpha \  \makebox{ and } \ S=-\tr_j(u).
\end{equation}
Moreover it holds that
 $K(u)=K(R,S)$   and    $F=K(x,y)=K(x,u)=K(u,y).$
\end{proposition}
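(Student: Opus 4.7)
The plan is to handle uniqueness, existence, and the field equalities separately, using a Bezout identity in the commutative ring $\mathbb{F}_p[\phi]$ (with $\phi$ denoting the $q$-power Frobenius) as the key algebraic tool.

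For uniqueness, if $u_1,u_2\in F$ both satisfy \eqref{eq:rsu}, then $\tr_k(u_1-u_2)=0=\tr_j(u_1-u_2)$. Remark \ref{rem:traces}(ii) forces $u_1-u_2\in\fqj\cap\fqk$, which equals $\fq$ since $\gcd(j,k)=1$. For $c\in\fq$ one has $\tr_j(c)=jc$, so $j(u_1-u_2)=0$; combined with the assumption $p\nmid j$ from \eqref{eq:relprime}, this gives $u_1=u_2$.

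For existence, note first that $\alpha\in\fp\subseteq\fq$ implies $\tr_j(\alpha)=j\alpha=1$, so Equation \eqref{eq:basiceq} is equivalent to the compatibility condition
\[
\tr_j(R-\alpha)+\tr_k(S)=0.
\]
In the commutative ring $\mathbb{F}_p[\phi]$ one has $\tr_j=(\phi^j-1)/(\phi-1)$ and $\tr_k=(\phi^k-1)/(\phi-1)$, and $\gcd(\phi^j-1,\phi^k-1)=\phi^{\gcd(j,k)}-1=\phi-1$. Bezout thus yields $A(\phi),B(\phi)\in\mathbb{F}_p[\phi]$ with $A(\phi)\tr_j+B(\phi)\tr_k=1$ as operators. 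I would then define
\[
u:=B(\phi)(R-\alpha)-A(\phi)(S)\in F,
\]
and, using that $A(\phi)$ and $B(\phi)$ commute with both $\tr_j$ and $\tr_k$ together with the compatibility above, verify by direct computation that $\tr_k(u)=R-\alpha$ and $\tr_j(u)=-S$.

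The identity $K(u)=K(R,S)$ then follows immediately from Remark \ref{rem:traces}(iii): since $\gcd(j,k)=1$,
\[
K(u)=K(\tr_j(u),\tr_k(u))=K(-S,\,R-\alpha)=K(R,S).
\]
The equalities $F=K(x,u)=K(u,y)$ reduce to showing $F=K(x,R)$ and $F=K(y,S)$, which are clear from $R=y/x^{q^k}$ (so $y=Rx^{q^k}$) and $S=y^{q^j}/x$ (so $x=y^{q^j}/S$); the resulting inclusions $K(x,R)\subseteq K(x,u)\subseteq F$ and $K(y,S)\subseteq K(y,u)\subseteq F$ close the argument. The main (mild) obstacle is organizing the Bezout step so that the resulting $u$ visibly lies in $F$, but since $A(\phi)$ and $B(\phi)$ act as $\fq$-linearized additive polynomials on the elements $R-\alpha,S\in F$, this poses no genuine difficulty.
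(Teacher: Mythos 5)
Your proof is correct, but it takes a genuinely different route for the existence part. The paper works in an algebraically closed overfield $\Omega \supseteq F$, picks an arbitrary $u_0$ with $\tr_k(u_0)=R-\alpha$, and then studies the finite affine set $\mathcal M=\{\mu : \tr_k(\mu)=0\}$ of translations: it shows that $\mu\mapsto S+\tr_j(u_0+\mu)$ is an injective self-map of $\mathcal M$, hence surjective, and takes a preimage of $0$ to get $u$. The fact that $u$ lies in $F$ (rather than just $\Omega$) is then deduced \emph{a posteriori} from $K(u)=K(R,S)\subseteq F$ via Remark~\ref{rem:traces}(iii). Your approach instead computes in the commutative polynomial ring $\mathbb{F}_p[\phi]$: using $\gcd(\phi^j-1,\phi^k-1)=\phi-1$ you get $\gcd(\tr_j,\tr_k)=1$ as polynomials, a Bezout identity $A(\phi)\tr_j+B(\phi)\tr_k=1$, and then the explicit formula $u=B(\phi)(R-\alpha)-A(\phi)(S)$. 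This makes $u\in F$ \emph{manifest} from the start (it is a $\mathbb{F}_p$-linear combination of Frobenius powers applied to $R-\alpha,S\in F$), and the verification $\tr_k(u)=R-\alpha$, $\tr_j(u)=-S$ is a one-line computation using the compatibility $\tr_j(R-\alpha)+\tr_k(S)=0$. What you lose relative to the paper is brevity; what you gain is an explicit $\mathbb F_q$-linearized formula for $u$, which also renders the uniqueness argument (which you give identically to the paper) nearly redundant. The final field-equality portion of your argument is essentially the paper's, routed through $K(x,R)$ and $K(y,S)$ rather than directly through $y=Rx^{q^k}$, but this is a cosmetic difference.
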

\begin{proof}
Let $\Omega \supseteq F$ be an algebraically closed field. Choose $u_0 \in \Omega$ such
that $\tr_k(u_0)=R-\alpha$. Set
$$\mathcal M :=\{\mu \in \Omega \, | \, \tr_k(\mu)=0\}\ \makebox{ and } \ u_\mu:=u_0+\mu, \makebox{ for  } \mu \in \mathcal M.$$
Then $\tr_k(u_\mu)=R-\alpha$ for all $\mu \in \mathcal M$, and by Equation\,(\ref{eq:basiceq})
\begin{eqnarray*}
1 & = & \tr_j(R)+\tr_k(S)=\tr_j(\tr_k(u_\mu)+\alpha)+\tr_k(S)\\
  & = & \tr_k(\tr_j(u_\mu))+j\alpha+\tr_k(S)=\tr_k(S+\tr_j(u_\mu))+1.
\end{eqnarray*}
Hence $S+\tr_j(u_\mu)\in \mathcal M$. The map  $\mu \mapsto S+\tr_j(u_\mu)$ from $\mathcal M$ to itself is injective. To see this, assume that $S+\tr_j(u_0+\mu)=S+\tr_j(u_0+\mu')$ with $\mu,\mu' \in \mathcal M$. Then $\tr_j(\mu-\mu')=0=\tr_k(\mu-\mu')$, hence $\mu-\mu' \in \fqj \cap \fqk=\fq$ and $0=\tr_j(\mu-\mu')=j(\mu-\mu')$. As $j$ is relatively prime to $p$, it follows that $\mu=\mu'$.

\vspace{.2cm} \noindent
Since $\mathcal M$ is a finite set and $0 \in \mathcal M$, there exists some $\mu_0\in \mathcal M$ such that $S+\tr_j(u_{\mu_0})=0$, and then the element $u:=u_{\mu_0}$ satisfies  Equation\,(\ref{eq:rsu}). From item (iii) of Remark\,\ref{rem:traces}, we conclude that $K(u)=K(R,S) \subseteq F$. In particular, the element $u$ belongs to $F$.

\vspace{.2cm} \noindent
To prove uniqueness, assume that $\tilde{u} \in \Omega$ is another element which satisfies Equation\,(\ref{eq:rsu}). Then $\tr_k(\tilde{u})=\tr_k(u)$ and $\tr_j(\tilde{u})=\tr_j(u)$; hence $\tilde{u}-u \in \fqk\cap \fqj=\fq$ and $0=\tr_j(\tilde{u}-u)=j(\tilde{u}-u)$. This implies $\tilde{u}=u$.

\vspace{.2cm} \noindent
The inclusion $K(x,u) \subseteq K(x,y)$ is clear. Conversely, we have $y=Rx^{q^k} \in K(x,u)$ by
Equation\,(\ref{eq:rsu}), hence $K(x,y) \subseteq K(x,u)$. The equality $K(x,y) = K(u,y)$ is shown similarly.
\end{proof}

\begin{proposition}\label{prop:cyclic}
The extension $F/K(u)$ is a cyclic extension of degree $[F:K(u)]=q^n-1$. The elements $x$ and $y$ are Kummer generators for $F/K(u)$, and they satisfy the equations
$$x^{q^n-1}=\frac{-\tr_j(u)}{(\tr_k(u)+\alpha)^{q^j}} \makebox{ and } y^{q^n-1}=\frac{-(\tr_j(u))^{q^k}}{\tr_k(u)+\alpha}.$$
The field $K$ is the full constant field of $F$; i.e., $K$
 is algebraically closed in $F$.\end{proposition}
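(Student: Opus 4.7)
The plan is to combine the relations from Proposition \ref{prop:rs} into Kummer equations for $x$ and $y$ over $K(u)$, and then apply Kummer theory via a divisor computation in the rational function field $K(u)$.

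From Proposition \ref{prop:rs}, $R=y/x^{q^k}=\tr_k(u)+\alpha$ and $S=y^{q^j}/x=-\tr_j(u)$. Substituting $y=R\,x^{q^k}$ into the formula for $S$ yields $S=R^{q^j}\,x^{q^n-1}$, whence
$$x^{q^n-1}=\frac{S}{R^{q^j}}=\frac{-\tr_j(u)}{(\tr_k(u)+\alpha)^{q^j}};$$
the symmetric substitution $x=y^{q^j}/S$ into the formula for $R$ gives $y^{q^n-1}=S^{q^k}/R$, which is the second claimed formula.

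Next I would analyze the divisor of $w:=x^{q^n-1}$ in the rational function field $K(u)$. Since the formal derivatives of $\tr_j(u)$ and $\tr_k(u)$ are both $1$, both $\tr_j(u)$ and $\tr_k(u)+\alpha$ are separable, of degrees $q^{j-1}$ and $q^{k-1}$, with roots in $\fqj$ and $\fqk$ respectively. A common root $a$ would lie in $\fqj\cap\fqk=\fq$ (using $\gcd(j,k)=1$), forcing $\tr_j(a)=ja=0$ and hence $a=0$ since $p\nmid j$; but then $\tr_k(0)+\alpha=\alpha\ne 0$, a contradiction. Thus at any of the $q^{j-1}$ rational places $P$ of $K(u)$ corresponding to a root of $\tr_j(u)$, one has $v_P(w)=1$. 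Since $K=\fl=\mathbb{F}_{q^n}$ contains all $(q^n-1)$-th roots of unity, Kummer theory applies: if $w=h^r$ for some $h\in K(u)^*$ and prime $r\mid q^n-1$, then $v_P(w)=r\,v_P(h)$ would force $r\mid 1$. Hence no such $r$ exists, so $[K(u,x):K(u)]=q^n-1$ and the extension is cyclic. By Proposition \ref{prop:rs} this equals $[F:K(u)]$; a parallel computation with $y^{q^n-1}$ (where roots of $\tr_k(u)+\alpha$ produce places with valuation $-1$) shows that $y$ is likewise a Kummer generator.

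For the constant field, the same place $P$ satisfies $\gcd(v_P(w),q^n-1)=1$ and is therefore totally ramified in $F/K(u)$; since $P$ has degree one, the unique place of $F$ above $P$ has residue field $K$, forcing the full constant field of $F$ to be $K$. The main obstacle is the divisor analysis in the third paragraph: ensuring that numerator and denominator share no common zeros, which depends essentially on the hypothesis $p\nmid j$ from \eqref{eq:relprime} and is what produces the valuation-$1$ place on which the Kummer argument pivots.
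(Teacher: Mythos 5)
Your proposal is correct and follows essentially the same route as the paper: derive the Kummer equations for $x$ and $y$ from the relations in Proposition \ref{prop:rs}, then locate a place of $K(u)$ where the Kummer element has valuation coprime to $q^n-1$ to force total ramification of degree $q^n-1$, from which both the degree count and the constant-field claim follow.

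The only difference is that the paper singles out the place $[u=0]$ directly (noting $\tr_j(u)$ has a simple zero there while $\tr_k(u)+\alpha$ does not vanish since $\alpha\neq 0$), whereas you survey all roots of $\tr_j(u)$ and verify that the numerator and denominator share no common zeros. That extra check is correct but more than is needed. One small imprecision: over $K=\fl$ the $q^{j-1}$ roots of $\tr_j(u)$ do not all give rational (degree-one) places; they lie in $\fqj$, and since $\fqj\cap\fqn=\fq$ and $p\nmid j$, only $u=0$ is rational. This does not undermine your argument, because the Kummer degree count only needs $\gcd(v_P(w),q^n-1)=1$ at some place $P$ (rationality irrelevant), and your constant-field step only needs one degree-one totally ramified place, which $[u=0]$ supplies. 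Tightening the wording to pivot explicitly on $[u=0]$ would align your proof exactly with the paper's.
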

\begin{proof}
By Equation\,(\ref{eq:rs}),
$$Sx=y^{q^j}=(Rx^{q^k})^{q^j}=R^{q^j}x^{q^n}.$$
Hence, using Equation\,(\ref{eq:rsu}), we obtain
$$x^{q^n-1}=\frac{S}{R^{q^j}}=\frac{-\tr_j(u)}{(\tr_k(u)+\alpha)^{q^j}}.$$
The equation for $y^{q^n-1}$ is proved in the same way. The element $$\frac{-\tr_j(u)}{(\tr_k(u)+\alpha)^{q^j}} \in K(u)$$ has a simple zero at $u=0$; this place is therefore totally ramified in $F=K(x,u)$ over $K(u)$, with ramification index $e=q^n-1$. Hence $[F:K(u)]=q^n-1$, and $K$ is algebraically closed in $F$. As the field $K$ contains all $(q^n-1)$-th roots of unity, the extension $F/K(u)$ is cyclic.
\end{proof}

\begin{corollary}\label{cor:cyclic}
Set $w:=-x^{q^n-1}$ and $z:=-y^{q^n-1}$. Then one has a diagram of subfields of $F$ as in Figure\,{\rm \ref{fig:one}}. The extensions $K(x)/K(w)$, $K(y)/K(z)$ and $F/K(u)$ are all cyclic of degree $q^n-1$; the extensions $F/K(x)$, $F/K(y)$, $K(u)/K(w)$ and $K(u)/K(z)$ are all of degree $q^{n-1}$.
\end{corollary}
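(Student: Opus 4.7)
The plan is to leverage Proposition~\ref{prop:cyclic}, which gives $[F:K(u)]=q^n-1$ together with the explicit formulas $x^{q^n-1}=-\tr_j(u)/(\tr_k(u)+\alpha)^{q^j}$ and $y^{q^n-1}=-(\tr_j(u))^{q^k}/(\tr_k(u)+\alpha)$. These show $w,z\in K(u)$, which establishes the inclusion pattern of the diagram. What remains is to pin down the degrees of the six non-trivial edges.

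For the three edges claimed to have degree $q^n-1$: Proposition~\ref{prop:cyclic} already gives this for $F/K(u)$. For $K(x)/K(w)$, I would observe that $x$ is a root of $T^{q^n-1}+w\in K(w)[T]$, which is separable (since $\gcd(q^n-1,p)=1$) and splits in $K(x)$ because $K\supseteq\fl$ contains all $(q^n-1)$-th roots of unity; hence $K(x)/K(w)$ is cyclic Kummer of some degree $d\mid q^n-1$. To force $d=q^n-1$, I would combine this with $F=K(x)\cdot K(u)$: since $K(x)/K(w)$ is Galois and $K(w)\subseteq K(u)$,
$$q^n-1 \;=\; [F:K(u)] \;=\; [K(x)\cdot K(u):K(u)] \;=\; [K(x):K(x)\cap K(u)] \;\le\; [K(x):K(w)] \;\le\; q^n-1.$$
Equality throughout yields both $[K(x):K(w)]=q^n-1$ and the bonus identity $K(x)\cap K(u)=K(w)$. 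The argument for $K(y)/K(z)$ is identical.

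For $[K(u):K(w)]=q^{n-1}$, I would compute this as $\deg(w)_0$ in the rational function field $K(u)$ using the explicit formula for $w$. The polynomials $\tr_j(u)$ and $\tr_k(u)+\alpha$ are separable of degrees $q^{j-1}$ and $q^{k-1}$ in $u$ (each has derivative $1$). The crucial point is that their zero sets are disjoint: a common zero $u_0$ would satisfy $\tr_j(u_0)=0$ and $\tr_k(u_0)=-\alpha$, so by Remark~\ref{rem:traces}(ii) one gets $u_0\in\fqj\cap\fqk=\fq$ (using $\gcd(j,k)=1$), whence $\tr_j(u_0)=ju_0=0$ forces $u_0=0$ because $p\nmid j$, contradicting $\tr_k(0)+\alpha=\alpha\ne 0$. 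Thus $w$ has $q^{j-1}$ simple zeros at finite places of $K(u)$, and a local analysis at $u=\infty$ (numerator of degree $q^{j-1}$, denominator of degree $q^{n-1}$) gives an additional zero there of order $q^{n-1}-q^{j-1}$. Summing, $\deg(w)_0=q^{n-1}$. The argument for $[K(u):K(z)]$ is symmetric.

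The remaining two degrees $[F:K(x)]=[F:K(y)]=q^{n-1}$ then follow from the diamond identity $[F:K(x)]\cdot[K(x):K(w)]=[F:K(u)]\cdot[K(u):K(w)]$. I expect the main obstacle to be the disjointness verification for the zero sets of $\tr_j(u)$ and $\tr_k(u)+\alpha$: this is precisely the step that uses both hypotheses $\gcd(j,k)=1$ and $p\nmid j$, and without them the divisor computation (and hence the whole degree structure of the diagram) would collapse.
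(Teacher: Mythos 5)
Your proof is correct and is in essence the elaboration the paper leaves implicit: the paper's proof just cites Proposition~\ref{prop:cyclic} together with the expressions for $w$ and $z$ in terms of $u$, and your computation that the zero divisor of $w$ in $K(u)$ has degree $q^{n-1}$ (using the disjointness of $\Gamma$ and $\Delta$, which rests on $\gcd(j,k)=1$ and $p\nmid j$), combined with Kummer theory and the tower law, is exactly the standard way to fill in the degree bookkeeping. The one mild variant is your Galois/compositum argument for $[K(x):K(w)]=q^n-1$; one could instead read $[F:K(x)]\le q^{n-1}$ directly off the degree of the defining equation of $y$ over $K(x)$ and let the tower law force all the equalities, but both routes are equivalent in substance.
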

\begin{proof}
This follows directly from Proposition\,\ref{prop:cyclic}, since
\begin{equation}\label{eq:zw}
w=-x^{q^n-1}=\frac{\tr_j(u)}{(\tr_k(u)+\alpha)^{q^j}} \ \makebox{ and } \ z=-y^{q^n-1}=\frac{(\tr_j(u))^{q^k}}{\tr_k(u)+\alpha}.
\end{equation}
\end{proof}

\newpage

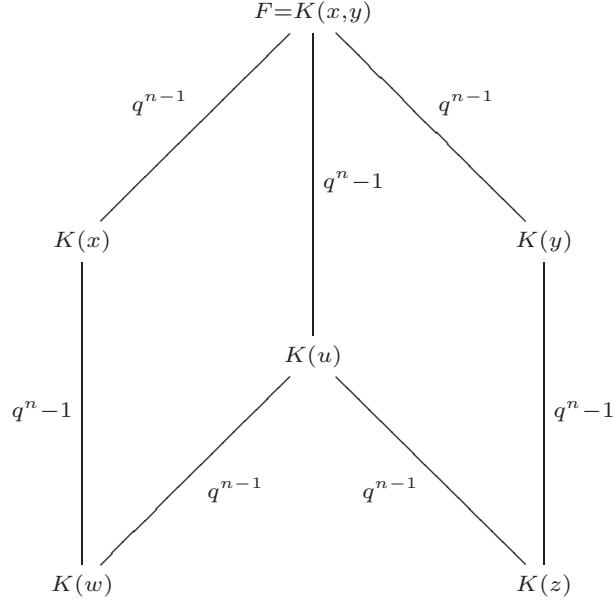
\begin{figure}[h]
\begin{center}
\scalebox{1.2}{
\makebox{
\def\objectstyle{\scriptstyle}
\xymatrix@!=1pc{
&&F=K(x,y) \ar@{-}^{q^{n-1}}[ddrr] \ar@{-}_{q^{n-1}}[ddll] \ar@{-}^{q^n-1}[ddd] && \\
&&&&\\
K(x) \ar@{-}_{q^n-1}[ddd] &&&&  K(y) \ar@{-}^{q^n-1}[ddd] \\
&&K(u) \ar@{-}_{q^{n-1}}[ddrr] \ar@{-}^{q^{n-1}}[ddll]& \\
&&&&\\
K(w) &&&& K(z)\\
}
}
}
\end{center}
\caption{Subextensions of $F$ and their degrees\label{fig:one}}
\end{figure}

\noindent
Our next goal is to describe ramification and splitting of places in the various field extensions in
Figure\,\ref{fig:one}. Denote by $\fl^{\times}$ the multiplicative group of $\fl$. We need some more notation:
\begin{enumerate}
\item[{\rm(i)}] Let $E$ be a function field over $K$ and $0 \neq t \in E$. Then the divisors $$\di(t), \ \di_0(t) \makebox{ and } \di_\infty(t)$$ are the principal divisor, zero divisor and pole divisor of $t$ in $E$. Similarly, the divisor of a nonzero differential $\omega$ of $E$ is denoted by $\di(\omega)$.
\item[{\rm (ii)}] Let $K(t)$ be a rational function field over $K$. Then the places $$[t=\infty] \ \makebox{ and } \  [t=\beta]$$ are the pole of $t$ and the zero of $t-\beta$ in $K(t)$, for any $\beta \in K$.
\item[{\rm (iii)}]Let $E/H$ be a finite separable extension of function fields over $K$. Let $P$ be a place of $H$ and $P'$ a place of $E$ lying above $P$. Then $e(P'|P)$ is the ramification index, and $d(P'|P)$ is the different exponent of $P'$ over $P$.
\end{enumerate}

\begin{proposition}\label{prop:splitting}
For all $\beta \in \fl^{\times}$, the place $[x=\beta]$ splits completely in $F$; i.e., there are $q^{n-1}$ distinct places of $F$ above $[x=\beta]$, all of degree one. For all places $P$ of $F$ above $[x=\beta]$, the restriction of $P$ to $K(y)$ is a place $[y=\beta']$ with some $\beta' \in \fl^{\times}$.
\end{proposition}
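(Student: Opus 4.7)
The plan is to reduce the splitting problem to an elementary trace equation on $\fl$. By Corollary\,\ref{cor:cyclic} we have $[F:K(x)] = q^{n-1}$, so the equation
$$f(X,Y) := \tr_j(Y/X^{q^k}) + \tr_k(Y^{q^j}/X) - 1 = 0,$$
viewed as a polynomial of degree $q^{n-1}$ in $Y$ over $K(X)$, is (up to a unit in $K(X)$) the minimal polynomial of $y$ over $K(x)$. By Kummer's theorem, the rational places of $F$ lying above $[x=\beta]$ correspond bijectively to the roots in $\fl$ of $f(\beta, Y)$, provided these roots are simple. So it suffices to exhibit $q^{n-1}$ distinct roots of $f(\beta, Y)$ in $\fl^{\times}$.

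The decisive step is the substitution $Z := Y/\beta^{q^k}$. Because $\beta \in \fl^{\times}$ implies $\beta^{q^n - 1} = 1$, and since $q^{j+k} = q^n$, we get
$$\frac{Y^{q^j}}{\beta} \, = \, \frac{(\beta^{q^k}Z)^{q^j}}{\beta} \, = \, Z^{q^j}\cdot\beta^{q^{j+k} - 1} \, = \, Z^{q^j},$$
so the equation $f(\beta, Y) = 0$ becomes $\tr_j(Z) + \tr_k(Z^{q^j}) = 1$. Writing out both traces, the index ranges $\{0,\ldots,j-1\}$ and $\{j,\ldots,j+k-1\}$ tile $\{0,\ldots,n-1\}$, so the left-hand side telescopes to the absolute trace
$$\tr_j(Z) + \tr_k(Z^{q^j}) \, = \, \sum_{i=0}^{n-1} Z^{q^i} \, = \, \tr_n(Z),$$
and the whole equation collapses to $\tr_n(Z) = 1$.

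Since $\tr_n$ is the surjective $\fq$-linear absolute trace $\fl \to \fq$, its fiber over $1$ is a coset of the hyperplane $\ker\tr_n \subset \fl$ and has cardinality exactly $q^{n-1}$. As $\tr_n(Z) - 1$ is a separable polynomial of degree $q^{n-1}$, these exhaust all of its roots in $\flc$. Pulling back via $Y = \beta^{q^k} Z$ (an $\fl$-bijection, since $\beta^{q^k}\in\fl^{\times}$) yields $q^{n-1}$ distinct $\beta' \in \fl$ with $f(\beta, \beta') = 0$, and none of them is zero because $Z = 0$ gives $\tr_n(0) = 0 \ne 1$. We thus obtain $q^{n-1}$ distinct rational places of $F$ above $[x=\beta]$, and the fundamental equality $\sum e_i f_i = q^{n-1}$ forces each to be unramified of degree one, giving complete splitting. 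The restriction of any such place to $K(y)$ is $[y=\beta']$ with $\beta' \in \fl^{\times}$.

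The one non-routine ingredient is spotting the telescoping identity $\tr_j(Z) + \tr_k(Z^{q^j}) = \tr_n(Z)$; once this is combined with the relation $\beta^{\ell-1}=1$ to eliminate $\beta$ from the equation, the proof reduces to the surjectivity of the absolute trace, which is standard.
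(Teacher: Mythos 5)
Your proof is correct and follows essentially the same route as the paper: substitute $x=\beta$, use $\beta^{q^n}=\beta$ to turn $\tr_j(Y/\beta^{q^k})+\tr_k(Y^{q^j}/\beta)=1$ into $\tr_n(Y/\beta^{q^k})=1$, and count the $q^{n-1}$ simple roots, all in $\fl^\times$. You merely spell out the telescoping of $\tr_j$ and $\tr_k$ into $\tr_n$ and the appeal to Kummer's theorem, which the paper leaves implicit.
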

\begin{proof}
Upon multiplication by $x^{q^{k-1}}$, Equation\,(\ref{eq:basiceq}) is the minimal polynomial of $y$ over $K(x)$. Substituting $x=\beta$ into this equation we obtain $$\tr_n\left(\frac{y}{\beta^{q^k}}\right)=1,$$ which has $q^{n-1}$ simple roots, all belonging to $\fl^{\times}$.
\end{proof}

\vspace{.2cm}
\noindent
Next we describe ramification in subextensions of $F$. As ramification indices and different exponents do not change under separable constant field extensions, we will assume until the end of Section\,\ref{sec:two} that $K=\flc$. Hence all places of $F$ will have degree one. Recall that $\alpha \in \fp$ and $j\alpha=1$. The following sets will be important:
\begin{equation}\label{eq:gamma set}
\Gamma:=\left\{\gamma \in K \, | \,  \tr_j(\gamma)=0\right\}
\end{equation}
and
\begin{equation}\label{eq:delta set}
\Delta:=\left\{\delta \in K \, | \,  \tr_k(\delta)+\alpha=0\right\}.
\end{equation}
Clearly $\#\Gamma=q^{j-1}$, $\#\Delta=q^{k-1}$ and $\Gamma \cap \Delta = \emptyset$.

\begin{proposition}\label{prop:ramification}
Ramification in $F/K(u)$ is as follows:
\begin{enumerate}
\item[{\rm (i)}] The places $[u=\gamma]$ with $\gamma \in \Gamma$ and $[u=\delta]$ with $\delta \in \Delta$ are totally ramified in $F/K(u)$. We denote by $P_\gamma$ (resp.~$Q_\delta$) the unique place of $F$ lying above $[u=\gamma]$ (resp. above $[u=\delta]$).
\item[{\rm (ii)}] There are exactly $q-1$ places of $F$ above $[u=\infty]$; we denote them by $V_1,\dots,V_{q-1}$. Their ramification indices are $e(V_i|[u=\infty])=(q^n-1)/(q-1)$.
\item[{\rm (iii)}] All other places of $K(u)$ are unramified in $F$.
\end{enumerate}
\end{proposition}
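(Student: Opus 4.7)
The plan is to treat $F/K(u)$ as a Kummer extension of degree $N:=q^n-1$, generated by $x$ satisfying $x^N=-w$ with
$$
w=\frac{\tr_j(u)}{(\tr_k(u)+\alpha)^{q^j}}
$$
(Corollary\,\ref{cor:cyclic}). Since $\gcd(N,p)=1$ and $K$ contains all $N$-th roots of unity, the standard Kummer formula applies: for every place $P$ of $K(u)$ and every place $P'$ of $F$ above $P$,
$$
e(P'|P)=\frac{N}{\gcd(N,\,v_P(w))},
$$
and the number of places of $F$ above $P$ equals $N/e(P'|P)$ (recall that, with $K=\flc$, residue degrees are all $1$). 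The entire proof therefore reduces to computing $v_P(w)$ at each place $P$ of the rational function field $K(u)$.

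For the finite places, the key observation is that, as polynomials in $u$, both $\tr_j(u)$ and $\tr_k(u)+\alpha$ have derivative $1$ in characteristic $p$ and are therefore separable. Hence $|\Gamma|=q^{j-1}$, $|\Delta|=q^{k-1}$, every $\gamma\in\Gamma$ is a simple zero of $\tr_j(u)$, and every $\delta\in\Delta$ is a simple zero of $\tr_k(u)+\alpha$. Combined with $\Gamma\cap\Delta=\emptyset$, this gives $v_{[u=\gamma]}(w)=1$, $v_{[u=\delta]}(w)=-q^j$, and $v_{[u=\beta]}(w)=0$ for any other $\beta\in K$. Since $\gcd(N,1)=\gcd(N,q^j)=1$, the Kummer formula forces $e=N$ at each $[u=\gamma]$ and at each $[u=\delta]$---so each of them is totally ramified, producing a unique place $P_\gamma$ (resp.\ $Q_\delta$) above it---and $e=1$ at every other finite place. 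This yields (i) together with the finite-place half of (iii).

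The place $[u=\infty]$ needs a bit more care. Reading off leading terms gives $v_{[u=\infty]}(\tr_j(u))=-q^{j-1}$ and $v_{[u=\infty]}((\tr_k(u)+\alpha)^{q^j})=-q^{n-1}$, so $v_{[u=\infty]}(w)=q^{n-1}-q^{j-1}=q^{j-1}(q^k-1)$. Using $\gcd(N,q)=1$ together with the classical identity $\gcd(q^a-1,q^b-1)=q^{\gcd(a,b)}-1$,
$$
\gcd\bigl(N,\,q^{j-1}(q^k-1)\bigr)=\gcd(q^n-1,\,q^k-1)=q^{\gcd(n,k)}-1=q^{\gcd(j,k)}-1=q-1,
$$
where the last equality crucially invokes the hypothesis $\gcd(j,k)=1$ from Equation\,(\ref{eq:ij}). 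Therefore $e(P'|[u=\infty])=(q^n-1)/(q-1)$, and the number of places of $F$ above $[u=\infty]$ equals $N/e=q-1$, establishing (ii) and completing (iii).

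I expect the only truly delicate point to be this last $\gcd$ computation, where the coprimality of $j$ and $k$ has to be used at just the right moment to collapse $q^{\gcd(j,k)}-1$ down to $q-1$; everything else is a mechanical application of Kummer theory once the separability of the two trace polynomials has been noted.
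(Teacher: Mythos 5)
Your proof is correct and follows exactly the route the paper has in mind: the paper's proof simply cites Hasse's theory of Kummer extensions (Stichtenoth, Prop.\,3.7.3), and your argument spells out that application in full, computing $v_P(w)$ from the separability of $\tr_j(u)$ and $\tr_k(u)+\alpha$ and then invoking $e(P'|P)=N/\gcd(N,v_P(w))$ together with the identity $\gcd(q^n-1,q^k-1)=q^{\gcd(n,k)}-1$ and $\gcd(n,k)=\gcd(j,k)=1$. No gaps.
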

\begin{proof}
Follows from Hasse's theory of Kummer extensions, see \cite[Proposition 3.7.3]{stich}
\end{proof}

\begin{corollary}\label{cor:genus F}
The genus of $F$ is $$g(F)=\frac12 \left((q^n-2)(q^{j-1}+q^{k-1}-2)+(q^n-q)\right).$$
\end{corollary}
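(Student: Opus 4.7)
The plan is to apply Hurwitz's genus formula to the Kummer extension $F/K(u)$, whose ramification data is completely described by Proposition \ref{prop:ramification}. Since $K(u)$ is rational, $g(K(u))=0$, so Hurwitz gives
\begin{equation*}
2g(F)-2 = -2(q^n-1) + \deg\,\mathrm{Diff}(F/K(u)).
\end{equation*}
Solving for $g(F)$ reduces everything to computing the degree of the different.

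First I would observe that all ramification in $F/K(u)$ is tame. Indeed, each ramification index $e(P'|P)$ is either $q^n-1$ or $(q^n-1)/(q-1)$, both of which are coprime to $p$ (since $p\mid q$ implies $p\nmid q^n-1$). Therefore Dedekind's different theorem yields $d(P'|P) = e(P'|P)-1$ for every ramified place, which is the only ingredient that makes the computation essentially mechanical.

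Next I would sum the contributions from each of the three classes of ramified places listed in Proposition \ref{prop:ramification}. The $q^{j-1}$ places $[u=\gamma]$ with $\gamma\in\Gamma$ and the $q^{k-1}$ places $[u=\delta]$ with $\delta\in\Delta$ are each totally ramified with a single place $P'$ above, contributing $d(P'|[u=\cdot])=q^n-2$ apiece, for a total of $(q^{j-1}+q^{k-1})(q^n-2)$. The pole $[u=\infty]$ splits into $q-1$ places $V_1,\dots,V_{q-1}$, each with ramification index $(q^n-1)/(q-1)$ and different exponent $(q^n-1)/(q-1)-1 = (q^n-q)/(q-1)$, contributing $(q-1)\cdot(q^n-q)/(q-1) = q^n-q$ in total. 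Hence
\begin{equation*}
\deg\,\mathrm{Diff}(F/K(u)) = (q^{j-1}+q^{k-1})(q^n-2) + (q^n-q).
\end{equation*}

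Substituting back into Hurwitz and rearranging,
\begin{equation*}
2g(F) = (q^{j-1}+q^{k-1})(q^n-2) + (q^n-q) - 2(q^n-1) + 2 = (q^n-2)(q^{j-1}+q^{k-1}-2) + (q^n-q),
\end{equation*}
which is the claimed formula. There is no genuine obstacle here; the only thing worth double-checking is that no additional places of $K(u)$ ramify (guaranteed by part (iii) of Proposition \ref{prop:ramification}) and that the tame-ramification formula applies uniformly — both follow once one notes $\gcd(q^n-1,p)=1$.
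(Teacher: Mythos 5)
Your proposal is correct and follows exactly the same route as the paper: apply Hurwitz' genus formula to the Kummer extension $F/K(u)$ and observe that all ramification is tame, so that each different exponent equals $e-1$. The paper simply states these two steps without writing out the bookkeeping, which you have done accurately.
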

\begin{proof}
Apply Hurwitz' genus formula \cite[Theorem 3.4.13]{stich} to the extension $F/K(u)$. Observe that all ramifications in this extension are tame.
\end{proof}

\vspace{.2cm}
\noindent
The next proposition will play an essential role in Section\,\ref{sec:three}. For abbreviation we set
\begin{equation}\label{eq:Nr}
N_r:=\frac{q^r-1}{q-1}, \makebox{ for every integer } r \ge 1.
\end{equation}

\begin{proposition}\label{prop:ram and diff}
Ramification indices and different exponents of the places $P_\gamma$, $Q_\delta$ and $V_i$ in the various subextensions of $F$ are as shown in Figures {\rm \ref{fig:two}, \ref{fig:three}} and {\rm \ref{fig:four}}. All other places in these subextensions are unramified.
\end{proposition}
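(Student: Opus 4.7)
The plan is to assemble the ramification data from three pieces: (a) the tame Kummer extension $F/K(u)$ with its data from Proposition~\ref{prop:ramification}, (b) the tame Kummer extensions $K(x)/K(w)$ and $K(y)/K(z)$ of degree $q^n-1$ from Corollary~\ref{cor:cyclic}, and (c) the degree-$q^{n-1}$ extensions $K(u)/K(w)$ and $K(u)/K(z)$, which carry all the wild ramification. Because $\gcd(q^n-1,q^{n-1})=1$ and $K(x)\cdot K(u)=F$ (and symmetrically $K(y)\cdot K(u)=F$), Abhyankar's lemma will glue (b) and (c) together once (c) is understood.

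To work out (c), I would begin by computing the principal divisor of $w$ in $K(u)$ from the formula in Corollary~\ref{cor:cyclic}. Since $\tr_j$ is separable and $\fq$-linearized of degree $q^{j-1}$ with zero set exactly $\Gamma$, and $\tr_k(u)+\alpha$ has $\Delta$ as its simple zero set, a direct degree count gives
\begin{equation*}
\di_0(w)=\sum_{\gamma\in\Gamma}[u=\gamma]+(q^{n-1}-q^{j-1})[u=\infty]\quad\text{and}\quad \di_\infty(w)=q^j\sum_{\delta\in\Delta}[u=\delta],
\end{equation*}
with an analogous description of the divisor of $z$. From this one reads off all ramification indices in $K(u)/K(w)$: trivial at each $[u=\gamma]$, $e=q^j$ at each $[u=\delta]$, and $e=q^{n-1}-q^{j-1}=q^{j-1}(q^k-1)$ at $[u=\infty]$.

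Next I would compute the different exponents in $K(u)/K(w)$. The monic minimal polynomial of $u$ over $K(w)$ is $g(T)=T^{q^{n-1}}+T^{q^{n-2}}+\dots+T^{q^j}+\alpha-(1/w)\,\tr_j(T)$, whose derivative in characteristic $p$ is simply $g'(T)=-1/w$. At each $[u=\delta]$, where $u$ is a local generator over $K(w)$, this yields $d([u=\delta]|[w=\infty])=v_{[u=\delta]}(-1/w)=q^j$. At $[u=\infty]$, where $u$ is not integral, the derivative formula fails; I would instead invoke Hurwitz' genus formula on the rational cover $K(u)/K(w)$: the total degree of the different is $2q^{n-1}-2$, and the $[u=\delta]$ contributions account for $q^{k-1}\cdot q^j=q^{n-1}$, which forces $d([u=\infty]|[w=0])=q^{n-1}-2$.

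With these pieces in hand, Abhyankar's lemma yields each ramification index in the diagram of Figure~\ref{fig:one} as the lcm of the indices along the two legs meeting at the relevant place, while the transitivity formula $d(Q|p)=d(Q|P)+e(Q|P)\,d(P|p)$ applied in the two towers $F\supset K(x)\supset K(w)$ and $F\supset K(u)\supset K(w)$ (and their $y$-side analogues) computes every different exponent; the two chains provide a cross-check for the wild different exponents at $Q_\delta$ and $V_i$ in $F/K(x)$ and $F/K(y)$. Unramifiedness elsewhere follows from the supports of $\di(w)$ and $\di(z)$. The main technical hurdle is the wild different at $[u=\infty]\,|\,[w=0]$: there the non-integrality of $u$ blocks the clean derivative formula, and routing through Hurwitz on the genus-zero cover is what sidesteps the obstruction.
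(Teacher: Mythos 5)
Your proposal is correct and follows essentially the same architecture as the paper's proof: compute $\di(w)$ in $K(u)$ to read off ramification indices in $K(u)/K(w)$, determine the wild different exponent at $[u=\infty]\mid[w=0]$, and then propagate to the rest of the diagram by Abhyankar's lemma for indices and transitivity for different exponents. The only real difference is how you obtain $d([u=\infty]\mid[w=0])=q^{n-1}-2$: the paper computes the divisor of the differential $dw$ directly via $du=(\tr_k(u)+\alpha)^{q^j}\,dw$ and the identity $\di(dw)=-2\,\di_\infty(w)+\diff(K(u)/K(w))$, whereas you first use the derivative $g'(u)=-1/w$ of the minimal polynomial to get $d([u=\delta]\mid[w=\infty])=q^j$ (which is fine, since the other roots of $g$ at $[w=\infty]$ have residues $\delta'\neq\delta$, so the factor from the global versus local minimal polynomial is a unit) and then deduce the remaining exponent from Hurwitz on the genus-zero cover; these are interchangeable standard computations that yield the same divisor of the different. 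Both routes are valid, and the rest of the argument is the same as the paper's.
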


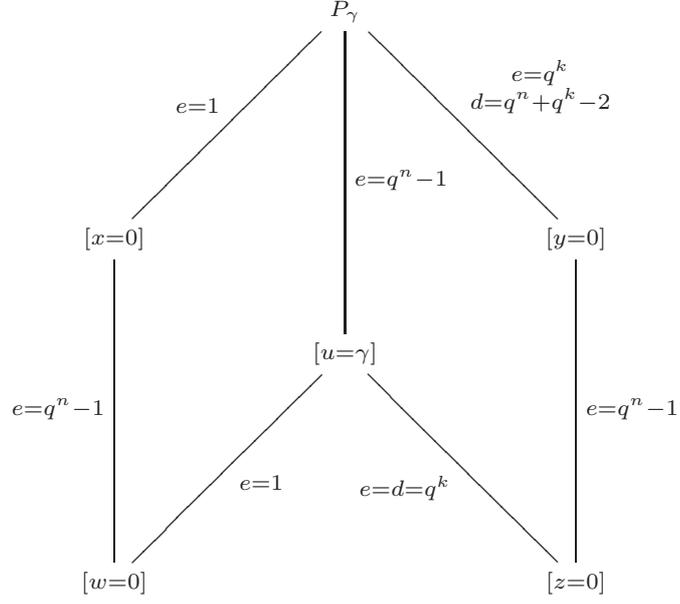
\begin{figure}
\begin{center}
\scalebox{1.2}{
\makebox{
\def\objectstyle{\scriptstyle}
\xymatrix@!=1pc{
&&P_\gamma \ar@{-}^{\substack{e=q^k\\d=q^n+q^k-2}}[ddrr] \ar@{-}_{e=1}[ddll] \ar@{-}^{e=q^n-1}[ddd] && \\
&&&&\\
[x=0] \ar@{-}_{e=q^n-1}[ddd] &&&&  [y=0] \ar@{-}^{e=q^n-1}[ddd] \\
&&[u=\gamma] \ar@{-}_{e=d=q^k}[ddrr] \ar@{-}^{e=1}[ddll]& \\
&&&&\\
[w=0] &&&& [z=0]\\
}
}
}
\end{center}
\caption{Ramification and different exponents for $P_\gamma, \ \gamma \in \Gamma$.\label{fig:two}}
\end{figure}

\begin{figure}
\begin{center}
\scalebox{1.2}{
\makebox{
\def\objectstyle{\scriptstyle}
\xymatrix@!=1pc{
&&Q_\delta \ar@{-}^{e=1}[ddrr] \ar@{-}_{\substack{e=q^j\\d=q^n+q^j-2}}[ddll] \ar@{-}^{e=q^n-1}[ddd] && \\
&&&&\\
[x=\infty] \ar@{-}_{e=q^n-1}[ddd] &&&&  [y=\infty] \ar@{-}^{e=q^n-1}[ddd] \\
&&[u=\delta] \ar@{-}^{e=d=q^j}[ddll] \ar@{-}_{e=1}[ddrr]& \\
&&&&\\
[w=\infty] &&&& [z=\infty]\\
}
}
}
\end{center}
\caption{Ramification and different exponents for $Q_\delta, \ \delta \in \Delta$.\label{fig:three}}
\end{figure}
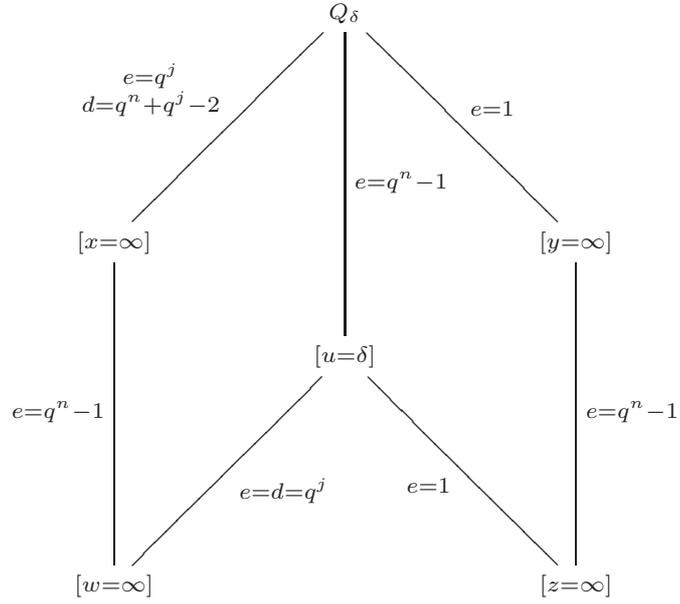

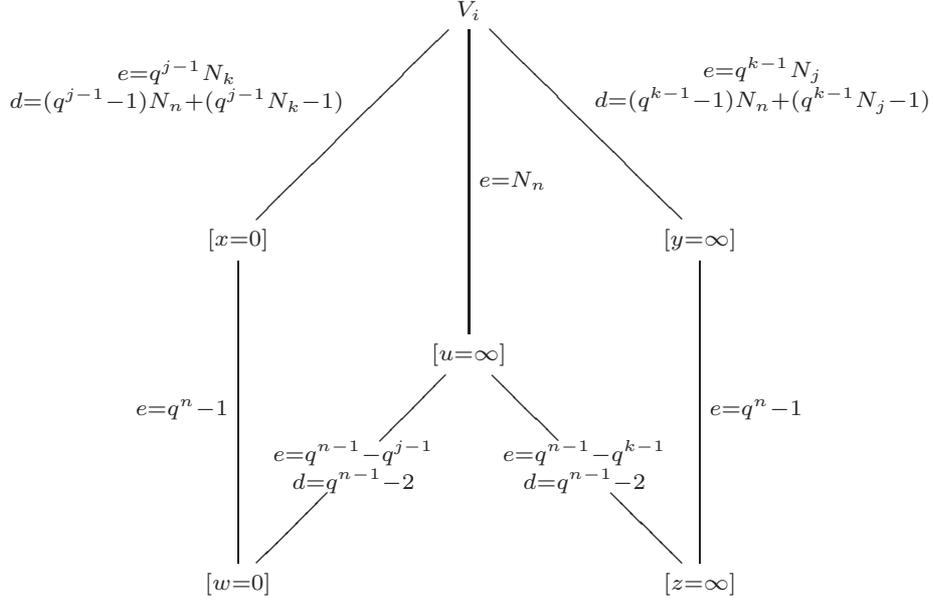
\begin{figure}
\begin{center}
\scalebox{1.2}{
\makebox{
\def\objectstyle{\scriptstyle}
\xymatrix@!=1pc{
&&V_i \ar@{-}^{\substack{e=q^{k-1}N_j\\d=(q^{k-1}-1)N_n+(q^{k-1}N_j-1)}}[ddrr] \ar@{-}_{\substack{e=q^{j-1}N_k\\d=(q^{j-1}-1)N_n+(q^{j-1}N_k-1)}}[ddll] \ar@{-}^{e=N_n}[ddd] && \\
&&&&\\
[x=0] \ar@{-}_{e=q^n-1}[ddd] &&&&  [y=\infty] \ar@{-}^{e=q^n-1}[ddd] \\
&&[u=\infty] \ar@{-}|{\substack{e=q^{n-1}-q^{j-1}\\d=q^{n-1}-2}}[ddll] \ar@{-}|{\substack{e=q^{n-1}-q^{k-1}\\d=q^{n-1}-2}}[ddrr]& \\
&&&&\\
[w=0] &&&& [z=\infty]\\
}
}
}
\end{center}
\caption{Ramification and different exponents for $V_i, \  1\leq i \leq q$.\label{fig:four}}
\end{figure}

\noindent
\begin{proof}
We work out the behaviour of the places $V_i$ in Figure \ref{fig:four}; the other cases are done in a similar way. So we consider a place $V=V_i$ of $F$ lying above the place $[u=\infty]$. It follows from Equation\,(\ref{eq:zw}) that $V$ is a zero of $x$ and $w$, and a pole of $y$ and $z$. Hence the restrictions of $V$ to the subfields $K(x)$, $K(w)$, $K(y)$ and $K(z)$ are the places $[x=0]$, $[w=0]$, $[y=\infty]$ and $[z=\infty]$.

\vspace{.2cm} \noindent
Next we investigate ramification of $[u=\infty]$ over $[w=0]$. From Equation\,(\ref{eq:zw}), the zero and pole divisor of $w$ in $K(u)$ are as follows:
\begin{equation}\label{eq:zero divisor w}
\di_0(w) =\sum_{\gamma \in \Gamma}\,[u=\gamma]+(q^{n-1}-q^{j-1})\,[u=\infty]
\end{equation}
and
\begin{equation}\label{eq:pole divisor w}
\di_\infty(w) =q^j\sum_{\delta \in \Delta}\,[u=\delta].
\end{equation}
Equation\,(\ref{eq:zero divisor w}) shows that $e([u=\infty]|[w=0])=q^{n-1}-q^{j-1}=q^{j-1}(q^k-1).$ The divisor of the differential $dw$ in $K(u)$ is
\begin{equation}\label{eq:divisor dw}
\di(dw) =-2 \, \di_\infty(w)+\diff(K(u)/K(w)),
\end{equation}
where $\diff(K(u)/K(w))$ is the different of the extension $K(u)/K(w)$, see \cite[p.178, Equation\,(4.37)]{stich}. Differentiating the equation
$$\tr_j(u)=(\tr_k(u)+\alpha)^{q^j}\cdot w$$
gives
$$du=(\tr_k(u)+\alpha)^{q^j}\cdot dw$$
and hence
\begin{eqnarray}
\di(dw) & = & \di(du)-q^j\cdot \di(\tr_k(u)+\alpha) \notag\\
        & = & -2\,[u=\infty]-q^j\sum_{\delta \in \Delta}\,[u=\delta]+q^j\cdot q^{k-1}\,[u=\infty]\notag \\
        & = & (q^{n-1}-2)\,[u=\infty]-q^j\sum_{\delta \in \Delta}\,[u=\delta].\label{eq:diveq dw2}
\end{eqnarray}
We substitute (\ref{eq:diveq dw2}) and (\ref{eq:pole divisor w}) into Equation\,(\ref{eq:divisor dw}) and obtain
$$\diff(K(u)/K(w))=(q^{n-1}-2)\,[u=\infty]+q^j\sum_{\delta \in \Delta}\,[u=\delta],$$
hence the different exponent of the place $[u=\infty]$ over $[w=0]$ is
$$d([u=\infty]|[w=0])=q^{n-1}-2.$$
The place extensions $V|[u=\infty]$ and $[x=0]|[w=0]$ are tamely ramified, with ramification indices given by $e(V|[u=\infty])=N_n$ and $e([x=0]|[w=0])=q^n-1$.
We see easily that $$e(V|[x=0])=q^{j-1}N_k,$$ and transitivity of different exponents gives (see \cite[Corollary\,3.4.12]{stich}) $$d(V|[x=0])=(q^{j-1}-1)N_n+(q^{j-1}N_k-1).$$  We have thus proved the left hand side of Figure\,\ref{fig:four}. The proof of the right hand side is analogous.
\end{proof}

\vspace{.2cm}
\noindent
We will also need the following lemma.
\begin{lemma}\label{lem:uzw}
We have $K(u)=K(z,w)$.
\end{lemma}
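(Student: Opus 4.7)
The plan is to establish $K(u) \subseteq K(z,w)$; the reverse inclusion is immediate from the formulas in Equation (\ref{eq:zw}). I would set $A := \tr_j(u)$ and $B := \tr_k(u) + \alpha$, so that Equation (\ref{eq:zw}) reads $w = A/B^{q^j}$ and $z = A^{q^k}/B$. A short direct manipulation (substitute $A = wB^{q^j}$ into the expression for $z$, and symmetrically use $B = A^{q^k}/z$ in the expression for $w$) yields
\begin{equation*}
A^{q^n-1} = \frac{z^{q^j}}{w} \quad \text{and} \quad B^{q^n-1} = \frac{z}{w^{q^k}} ,
\end{equation*}
which places both $A^{q^n-1}$ and $B^{q^n-1}$ in $K(z,w)$.

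The heart of the argument is then a coprimality/Kummer collapse. By Corollary \ref{cor:cyclic} we have $[K(u):K(w)] = q^{n-1}$, a $p$-power; the tower $K(w) \subseteq K(z,w) \subseteq K(u)$ therefore forces $[K(u):K(z,w)]$ to divide $q^{n-1}$. On the other hand, setting $L := K(z,w)$ and $E := L(A) \subseteq K(u)$, the field $L$ already contains all $(q^n-1)$-th roots of unity (since $K = \flc \subseteq L$), and $A \ne 0$ is a root of the separable polynomial $X^{q^n-1} - A^{q^n-1} \in L[X]$. Hence $E/L$ is a Kummer extension of some degree $d$ dividing $q^n-1$. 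But the inclusion $E \subseteq K(u)$ also gives $d \mid [K(u):L] \mid q^{n-1}$, and $\gcd(q^n-1, q^{n-1}) = 1$, so $d = 1$ and $A \in K(z,w)$. The identical argument applied to $B$ shows $B \in K(z,w)$, hence both $\tr_j(u) = A$ and $\tr_k(u) = B - \alpha$ lie in $K(z,w)$.

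To conclude, I would invoke Remark \ref{rem:traces}(iii), which applies since $\gcd(j,k) = 1$, to get
\begin{equation*}
K(u) = K(\tr_j(u),\tr_k(u)) \subseteq K(z,w),
\end{equation*}
completing the proof.

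The only substantive step is the coprimality collapse in the middle paragraph: the wild $p$-power degree $q^{n-1}$ available from the tower above $K(w)$ is coprime to the tame degree $q^n-1$ available from the Kummer relations, and this mismatch is what forces $A, B \in K(z,w)$. Everything else is either a routine manipulation of the defining relations (\ref{eq:zw}) or a citation of results already proved in Section \ref{sec:two}.
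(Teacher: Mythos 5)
Your proof is correct, and it runs on the same engine as the paper's argument — the coprimality of the tame degree $q^n-1$ with the wild degree $q^{n-1} = [K(u):K(w)]$ — but the way you harness that coprimality is genuinely different. The paper goes upstairs: from $F = L(x,y)$ and $x^{q^n-1} = -w,\; y^{q^n-1} = -z \in L$, it reads off $[F:L] \mid (q^n-1)^2$, hence $p \nmid [K(u):L]$, and combines this with $[K(u):L] \mid q^{n-1}$ to conclude $[K(u):L]=1$ in two lines. You instead stay inside $K(u)$: you solve the pair of relations in Equation\,(\ref{eq:zw}) to produce the new identities $\tr_j(u)^{q^n-1} = z^{q^j}/w$ and $(\tr_k(u)+\alpha)^{q^n-1} = z/w^{q^k}$, apply Kummer theory over $L = K(z,w) \supseteq K$ (which contains all $(q^n-1)$-th roots of unity) to see each generator lies in an extension of $L$ of degree dividing both $q^n-1$ and $q^{n-1}$, hence is already in $L$, and then finish with Remark\,\ref{rem:traces}\,(iii). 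Your route is longer and needs the auxiliary identities, but it pays off by exhibiting $\tr_j(u)$ and $\tr_k(u)+\alpha$ explicitly as elements of $K(z,w)$ rather than just establishing equality of fields; the paper's route is shorter and avoids any computation, at the cost of detouring through the bigger field $F$. Both are valid.
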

\begin{proof}
The field $L:=K(z,w)$  is clearly contained in $K(u)$ (see Figure\,\ref{fig:one}). As $F =L(x,y)$ and $x^{q^n-1}, y^{q^n-1} \in L$, it follows that $[F:L]$  divides $(q^n-1)^2$, and therefore $[K(u):L]$ is relatively prime to $p$. But $[K(u):L]$ divides the degree $[K(u):K(w)]=q^{n-1}$, hence $[K(u):L]=1$.
\end{proof}

\vspace{.2cm}
\noindent
For the convenience of the reader, we state Abhyankar's lemma and Hensel's lemma; they will be used frequently in Section \ref{sec:three}.

\begin{proposition}[Abhyankar's lemma] {\rm \cite[Theorem 3.9.1]{stich} } \label{prop:abhyankar}
Let $H$ be a field with a discrete valuation  $\nu :H\to \z\cup\{\infty\}$ having a perfect residue class field. Let $H'/H$ be a finite separable field extension of $H$ and suppose that $H'=H_1 \cdot H_2$ is the composite of two intermediate fields $H\subseteq H_1,H_2 \subseteq H'$. Let $\nu'$ be an extension of $\nu$ to $H'$ and $\nu_i$ the restriction of $\nu'$ to $H_i$, for $i=1,2$. Assume that at least one of the extensions $\nu_1|\nu$ or $\nu_2|\nu$ is tame (i.e., the ramification index $e(\nu_i|\nu)$ is relatively prime to the characteristic of the residue class field of $\nu$). Then one has $e(\nu'|\nu)=\lcm\{e(\nu_1|\nu),e(\nu_2|\nu)\},$ where $\lcm$ means the least common multiple.
\end{proposition}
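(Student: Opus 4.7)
I write $e_i := e(\nu_i|\nu)$ and $e' := e(\nu'|\nu)$, and split the desired equality into the two divisibilities $\lcm(e_1,e_2)\mid e'$ and $e'\mid\lcm(e_1,e_2)$. The first is formal: because $\nu'$ restricts to $\nu_i$ on $H_i$, multiplicativity of ramification indices in the tower $\nu\le\nu_i\le\nu'$ gives $e' = e(\nu'|\nu_i)\cdot e_i$, so $e_i\mid e'$ for $i=1,2$, and hence $\lcm(e_1,e_2)\mid e'$.

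For the reverse divisibility, assume without loss of generality that $\nu_1|\nu$ is tame. Ramification indices are insensitive to completion, so I would first replace $H,H_1,H_2,H'$ by their completions at the given valuations; the compositum relation $H' = H_1\cdot H_2$ is preserved. By the classical structure theorem for tame extensions of a complete discretely valued field with perfect residue field (and after harmlessly enlarging the base by the maximal unramified subextension of $H_1/H$, which does not alter any ramification index in sight), we may assume $H_1 = H(\theta)$ with $\theta^{e_1} = \pi$ for some uniformizer $\pi$ of $H$, where $\gcd(e_1,p)=1$. Now write $\pi = u\,\pi_2^{e_2}$ for a uniformizer $\pi_2$ and a unit $u$ of $H_2$, and set $d := \gcd(e_1,e_2)$, $e_1 = d\alpha$, $e_2 = d\beta$; then $\gcd(\alpha,\beta)=1$ and $\lcm(e_1,e_2) = d\alpha\beta$. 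The element
$$
\eta \; := \; \frac{\theta^{\alpha}}{\pi_2^{\beta}} \; \in \; H'
$$
satisfies $\eta^d = \theta^{e_1}/\pi_2^{e_2} = u$, so $\eta$ is a $\nu'$-unit and $H_2(\eta)/H_2$ is unramified (the polynomial $X^d - u$ is separable modulo the maximal ideal since $\gcd(d,p)=1$). Over $H_2(\eta)$, the element $\theta$ is a root of $T^\alpha - \pi_2^\beta \eta$; since $\gcd(\alpha,\beta)=1$, the Newton polygon of this polynomial consists of a single segment of slope $\beta/\alpha$ in lowest terms, forcing it to be irreducible and to define a totally ramified extension of index $\alpha$. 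Hence $e(\nu'|\nu_2) = \alpha = e_1/d$, and consequently
$$
e' \; = \; e_2 \cdot e(\nu'|\nu_2) \; = \; \frac{e_1 e_2}{d} \; = \; \lcm(e_1,e_2),
$$
as desired.

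The main obstacle is the reduction step: one must justify that passing to completions and absorbing the unramified part of $H_1/H$ into the base is harmless for all four ramification indices, and then invoke the structure theorem that a tame totally ramified extension of a complete DVR with perfect residue field is generated by an $e_1$-th root of a uniformizer. Once this setup is in place, the identification of $\eta$ and the Newton-polygon analysis of $T^\alpha - \pi_2^\beta\eta$ are a short calculation, and they exactly realize the expected denominator $d\alpha\beta = \lcm(e_1,e_2)$ in the value group of $\nu'$.
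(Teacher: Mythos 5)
The paper does not prove this proposition; it is quoted verbatim from Stichtenoth's textbook (Theorem~3.9.1) and cited without proof, so there is no in-paper argument to compare against. Your proof is correct and fills that gap along what I would call the standard route: the easy divisibility $\lcm(e_1,e_2)\mid e'$ by multiplicativity, completion and absorption of the unramified part of $H_1/H$ into the base (together with replacing $H_2$ by $H_2\cdot H_0$, which leaves all four ramification indices intact because unramified extensions are stable under base change), the structure theorem that a complete totally tamely ramified extension is generated by an $e_1$-th root of some uniformizer $\pi$ of the base, and then the clean Kummer/Newton-polygon computation with $\eta=\theta^\alpha/\pi_2^\beta$: since $\nu'(\eta)=\alpha/e_1-\beta/e_2=0$, $\eta$ is a unit, $X^d-u$ is separable mod the maximal ideal (its derivative $dX^{d-1}$ is a unit at any root because $p\nmid d$), so $H_2(\eta)/H_2$ is unramified, and $T^\alpha-\pi_2^\beta\eta$ has a one-segment Newton polygon of slope $\beta/\alpha$ with $\gcd(\alpha,\beta)=1$, hence is irreducible and totally ramified of index $\alpha=e_1/d$, giving $e'=e_2\cdot\alpha=\lcm(e_1,e_2)$. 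You correctly flag the reduction step as the only place requiring care, and the justification you sketch (completion preserves ramification, the compositum of the completions inside $\widehat{H'}$ is all of $\widehat{H'}$ by density and completeness, and the base-change by $H_0$ alters no index) is sound. This is essentially the textbook proof.
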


\begin{proposition}[Hensel's lemma] {\rm \cite[p. 230]{jac}} \label{prop:hensel}
Let $H$ be a field which is complete with respect to a discrete valuation $\nu:H\to \z\cup\{\infty\}$. Let $\mathcal O$ be the valuation ring of $\nu$ and $\mathfrak{m}$ its maximal ideal. Denote by $H^*=\mathcal{O}/\mathfrak{m}$ the residue class field of $\nu$ and by $a \mapsto a^*$ the canonical homomorpism of $\mathcal O$ onto $H^*$. Suppose that the polynomial $\varphi(T)\in \mathcal O[T]$ has the following property:
its reduction $\varphi^*(T)\in H^*[T]$ factorizes as $\varphi^*(T)=\eta_1(T)\cdot \eta_2(T)$ with
$$\eta_1(T),\eta_2(T) \in H^*[T], \,
 \gcd(\eta_1(T),\eta_2(T))=1, \makebox{ and } \eta_1(T) \makebox{ is monic.}$$
Then there are polynomials $\varphi_1(T), \varphi_2(T) \in \mathcal O[T]$ such that $\varphi(T)=\varphi_1(T)\cdot\varphi_2(T)$ with
 $$  \varphi_1(T) \makebox{ is monic, }
 \deg \varphi_1(T)=\deg \eta_1(T),  \ \varphi_1^*(T)=\eta_1(T) \makebox{ and } \varphi_2^*(T)=\eta_2(T).
$$
\end{proposition}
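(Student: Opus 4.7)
The plan is to prove Hensel's lemma by a successive approximation argument, constructing factorizations of $\varphi$ modulo higher and higher powers of $\mathfrak{m}$, and then passing to the limit using that $H$ is complete. Let $\pi \in \mathcal{O}$ be a uniformizer, so $\mathfrak{m} = (\pi)$; completeness of $H$ with respect to $\nu$ means that any sequence in $\mathcal{O}$ which is Cauchy for the $\pi$-adic topology converges, and coefficient-wise this carries over to $\mathcal{O}[T]_{\le d}$ for any fixed degree bound $d$.

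First I would fix arbitrary lifts: choose a monic $g_1 \in \mathcal{O}[T]$ with $g_1^* = \eta_1$ and $\deg g_1 = \deg \eta_1$, and a lift $g_2 \in \mathcal{O}[T]$ with $g_2^* = \eta_2$ and $\deg g_2 \le \deg \varphi - \deg \eta_1$ (adjusting $g_2$ if necessary so that $\varphi - g_1 g_2$ lies in $\pi \mathcal{O}[T]$ and has degree strictly less than $\deg \varphi$; this uses that $g_1$ is monic). Because $\gcd(\eta_1, \eta_2) = 1$ in $H^*[T]$, B\'ezout gives $\overline{a}, \overline{b} \in H^*[T]$ with $\overline{a}\eta_1 + \overline{b}\eta_2 = 1$; lift these to $a, b \in \mathcal{O}[T]$, so that $a g_1 + b g_2 \equiv 1 \pmod{\pi}$.

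Next, by induction on $n \ge 1$, I would construct $g_1^{(n)}, g_2^{(n)} \in \mathcal{O}[T]$ with $g_1^{(n)}$ monic of degree $\deg \eta_1$, $g_1^{(n)} \equiv g_1 \pmod{\pi}$, $g_2^{(n)} \equiv g_2 \pmod{\pi}$, and $\varphi \equiv g_1^{(n)} g_2^{(n)} \pmod{\pi^n}$. For the induction step, write $\varphi - g_1^{(n)} g_2^{(n)} = \pi^n h$ with $h \in \mathcal{O}[T]$. Using the approximate B\'ezout identity, decompose $h = a h \cdot g_1 + b h \cdot g_2 \pmod{\pi}$, then apply the Euclidean division (available because $g_1$ is monic) to write $b h = q g_1 + r$ with $\deg r < \deg g_1$. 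Setting
$$g_1^{(n+1)} := g_1^{(n)} + \pi^n r, \qquad g_2^{(n+1)} := g_2^{(n)} + \pi^n (a h + q g_2^{(n)}),$$
a direct computation shows $\varphi \equiv g_1^{(n+1)} g_2^{(n+1)} \pmod{\pi^{n+1}}$, while the degree bounds are preserved.

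Finally, the sequences $(g_1^{(n)})$ and $(g_2^{(n)})$ are Cauchy coefficient-wise and their degrees are bounded, so by completeness they converge to polynomials $\varphi_1, \varphi_2 \in \mathcal{O}[T]$ with $\varphi = \varphi_1 \varphi_2$, $\varphi_1$ monic, $\deg \varphi_1 = \deg \eta_1$, and the required reductions $\varphi_1^* = \eta_1$, $\varphi_2^* = \eta_2$. The main obstacle I expect is the bookkeeping in the induction step: one must ensure simultaneously that $g_1^{(n+1)}$ remains monic of the prescribed degree (which forces using the remainder $r$ rather than $bh$ itself) and that the degree of $g_2^{(n+1)}$ does not grow beyond $\deg \varphi - \deg \eta_1$, so that the coefficient-wise limit truly lives in a finite-dimensional $\mathcal{O}$-module where completeness applies.
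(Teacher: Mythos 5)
The paper does not prove this proposition; it is cited directly from Jacobson (\cite[p.~230]{jac}) as a standard tool, so there is no internal proof to compare against. Your argument is the standard successive-approximation proof of Hensel's lemma for factorizations, and its main structure is sound: fix lifts $g_1, g_2$, use the B\'ezout relation $ag_1+bg_2\equiv 1\pmod{\pi}$ to absorb the error term $\pi^n h$ at each stage, and pass to the limit by completeness.

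The one place where your write-up is not yet airtight is exactly the point you flag: with $g_2^{(n+1)} := g_2^{(n)} + \pi^n(ah + qg_2^{(n)})$, the correction term $s := ah + qg_2^{(n)}$ can have degree larger than $\deg\varphi - \deg\eta_1$, so the sequence $(g_2^{(n)})$ need not live in a fixed finite-rank $\mathcal{O}$-module, and coefficient-wise convergence is not automatic. The fix is short: reducing mod $\pi$, one has $\eta_1\bar s + \bar r\eta_2 = \bar h$ with $\deg\bar r < \deg\eta_1$ and $\deg\bar h \le \deg\varphi$, which forces $\deg\bar s \le \deg\varphi - \deg\eta_1$ (since $\deg(\bar r\eta_2) < \deg\eta_1 + \deg\eta_2 \le \deg\varphi$). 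So the terms of $s$ of degree exceeding $\deg\varphi - \deg\eta_1$ lie in $\pi\mathcal{O}[T]$; replace $s$ by any lift of $\bar s$ of degree $\le \deg\varphi - \deg\eta_1$. This changes $g_2^{(n+1)}$ by a multiple of $\pi^{n+1}$, so the congruence $\varphi \equiv g_1^{(n+1)}g_2^{(n+1)} \pmod{\pi^{n+1}}$ is preserved and the degree bound is maintained at every step. With that adjustment your induction closes and the argument is complete.
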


\section{The tower}\label{sec:three}

We keep all notation as before. In this section  we consider a sequence of function fields, \[\mathcal F=(F_1 \subseteq F_2 \subseteq F_3 \subseteq \dots),\]
where $F_1=K(x_1)$ is the rational function field, and for all $i\ge 1$, $F_{i+1}=F_i(x_{i+1})$ with
\begin{equation}\label{eq:defining eq F}
\tr_j\left(\frac{x_{i+1}}{x_{i}^{q^k}}\right)+\tr_k\left(\frac{x_{i+1}^{q^j}}{x_{i}}\right)=1\, .
\end{equation}
 A convenient way to investigate such a sequence is to consider the corresponding `pyramid' of field extensions as shown in Figure \ref{fig:five}. Note that the fields $K(x_i,x_{i+1})$ are isomorphic to the `basic function field' $F=K(x,y)$ that was studied in Section\,\ref{sec:two}.

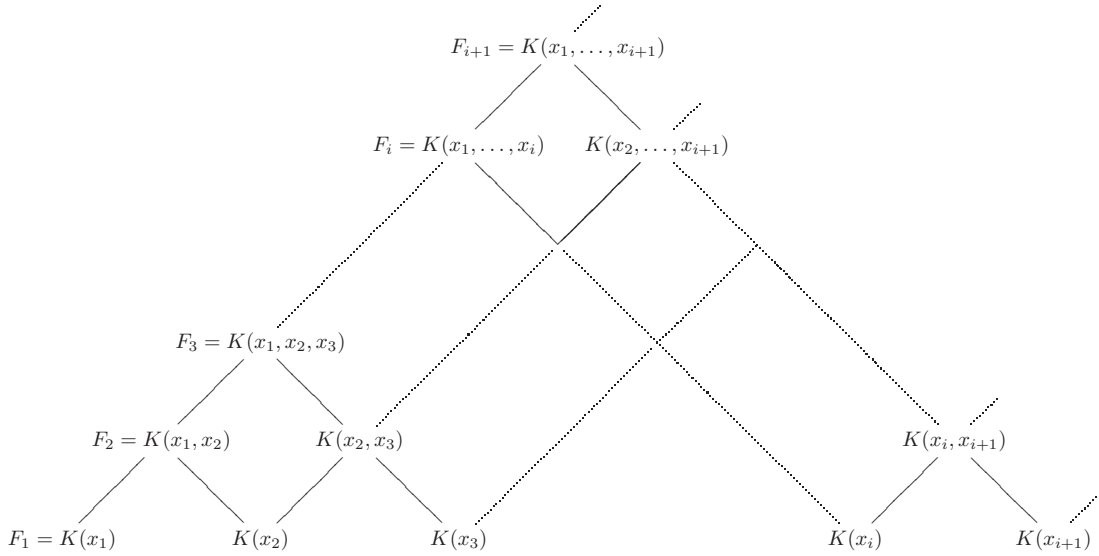
\begin{figure}[h!]
\begin{center}
\resizebox{\textwidth}{!}{
\makebox[\width][c]{
\xymatrix@!=2.5pc@dr{
F_{i+1}=K(x_1,\ldots, x_{i+1}) \ar@{-}[r] \ar@{.}[];[]+/ur 1cm/ & K(x_2,\ldots, x_{i+1})  \ar@{.}[rrr] \ar@{-}[];[d]+0 \ar@{.}[];[]+/ur 1cm/ &  \ar@{.}[]+0;[d]+0 \ar@{.}[rr]&&K(x_i,x_{i+1})\ar@{-}[r]\ar@{.}[];[]+/ur 1cm/& K(x_{i+1}) \ar@{.}[];[]+/ur 1cm/ \\
F_{i}=K(x_1,\ldots, x_{i}) \ar@{-}[];[r]+0 \ar@{-}[u]&\ar@{-}[u] \ar@{.}[rrr]& && K(x_i) \ar@{-}[u]\\
&&&&\\
F_3=K(x_1,x_2,x_3) \ar@{-}[r] \ar@{.}[uu]& K(x_2,x_3) \ar@{.}[uu] \ar@{-}[r]&K(x_3) \ar@{.}[uu]\\
F_2=K(x_1,x_2) \ar@{-}[u] \ar@{-}[r] & K(x_2)\ar@{-}[u]\\
F_1=K(x_1) \ar@{-}[u]
}
}
}
\end{center}
\caption{The pyramid corresponding to $\mathcal F$.\label{fig:five}}
\end{figure}

\begin{proposition}\label{prop:F is tower}
The sequence $\mathcal F$ is a tower of function fields over $K$; i.e., for all $i \ge 1$ the following hold:
\begin{enumerate}
\item[{\rm (i)}] $K$ is the full constant field of $F_i$,
\item[{\rm (ii)}] $F_{i+1}/F_i$ is a separable extension of degree $[F_{i+1}:F_i]>1$,
\item[{\rm (iii)}] $g(F_i) \to \infty$ as $i \to \infty$.
\end{enumerate}
\end{proposition}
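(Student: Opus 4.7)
The plan is to prove all three assertions simultaneously by induction on $i$. The base case is handled by Section 2: the isomorphism $F_2 = K(x_1, x_2) \cong F$ via $x_1 \leftrightarrow x$, $x_2 \leftrightarrow y$ transfers Proposition 2.4, Corollary 2.5 and Corollary 2.7 to give (i)--(iii) for $F_1 \subseteq F_2$. To make the induction go through cleanly, I would strengthen the hypothesis at level $i$ as follows: for every $\beta \in \mathbb{F}_\ell^\times$, the rational place $[x_1 = \beta]$ of $F_1$ has exactly $q^{(n-1)(i-1)}$ distinct rational places above it in $F_i$, and each such place restricts to $K(x_j)$ as some $[x_j = \beta_j]$ with $\beta_j \in \mathbb{F}_\ell^\times$, for every $1 \le j \le i$.

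For the induction step I would take such a rational place $P \in F_i$ and let $[x_i = \beta_i]$ be its restriction to $K(x_i)$. The defining polynomial $\Phi(T) \in K(x_i)[T]$ for $x_{i+1}$, obtained from Equation~(3.1) by clearing denominators, has degree $q^{n-1}$; its reduction $\Phi_{\beta_i}(T) \in \mathbb{F}_\ell[T]$ modulo the maximal ideal at $P$ is, up to a nonzero scalar, $\mathrm{tr}_n(T/\beta_i^{q^k}) - 1$, which by the argument in the proof of Proposition 2.8 has $q^{n-1}$ pairwise distinct roots, all lying in $\mathbb{F}_\ell^\times$. Applying Hensel's lemma (Proposition 2.12) in the completion of $F_i$ at $P$, this factorization into distinct linear factors lifts and yields $q^{n-1}$ distinct rational places of $F_{i+1}$ above $P$, each restricting further to some $[x_{i+1} = \beta_{i+1}]$ with $\beta_{i+1} \in \mathbb{F}_\ell^\times$. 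Summing over the $q^{(n-1)(i-1)}$ places of $F_i$ above $[x_1 = \beta]$ gives exactly $q^{(n-1)i}$ rational places of $F_{i+1}$ with the required properties, closing the strengthened induction and in particular establishing~(i).

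Assertion (ii) now falls out by degree counting. On the one hand the place count forces $[F_{i+1} : F_1] \ge q^{(n-1)i}$, while on the other hand the pyramid gives $[F_{i+1}:F_1] = \prod_{j=1}^{i} [F_{j+1}:F_j] \le (q^{n-1})^i$, since $x_{j+1}$ already satisfies the explicit polynomial $\Phi$ of degree $q^{n-1}$ over $K(x_j) \subseteq F_j$. Equality throughout forces $[F_{j+1}:F_j] = q^{n-1} > 1$, and separability is automatic because $\Phi(T)$ is $\mathbb{F}_q$-linear in $T$ with nonzero coefficient in the degree-one term, hence has a nonzero $T$-derivative. For (iii), Figure~4 exhibits nontrivial ramification of $[x_i = 0]$ in $K(x_i, x_{i+1})/K(x_i)$ with tame part $N_k = (q^k-1)/(q-1)$; Abhyankar's lemma (Proposition 2.11) applied to the compositum $F_{i+1} = F_i \cdot K(x_i, x_{i+1})$ over $K(x_i)$ propagates this tame contribution to $F_{i+1}/F_i$ above any place of $F_i$ lying over $[x_i = 0]$, so the different divisor $\mathrm{Diff}(F_{i+1}/F_i)$ is strictly positive. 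Hurwitz's genus formula then yields the geometric growth estimate $g(F_{i+1}) \ge q^{n-1} g(F_i) - (q^{n-1} - 1) + \varepsilon$ for some $\varepsilon > 0$, so $g(F_i) \to \infty$.

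The technical heart of the argument is the Hensel-lifting step in the induction: one must verify that $\Phi_{\beta_i}(T)$ really factors into $q^{n-1}$ pairwise distinct linear factors over the residue field $\mathbb{F}_\ell$ at $P$, so that Hensel produces the maximal number of rational place extensions without any collisions in the compositum $F_{i+1} = F_i \cdot K(x_i, x_{i+1})$. This rests squarely on the $\mathbb{F}_q$-linearity of the defining equation together with the crucial condition $\beta_i \in \mathbb{F}_\ell^\times$, which keeps the reduction both separable and of full degree $q^{n-1}$; it also explains why one cannot afford to let the iterated residues $\beta_j$ hit zero.
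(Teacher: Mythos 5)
There is a genuine gap in the induction step. After applying Hensel's lemma in the completion $\widehat{(F_i)}_P$, you do obtain that the degree-$q^{n-1}$ polynomial $\Phi(T)$ splits into $q^{n-1}$ distinct linear factors over $\widehat{(F_i)}_P$, and consequently so does the minimal polynomial $\mu(T)$ of $x_{i+1}$ over $F_i$ (which is some monic divisor of $\Phi$). But the places of $F_{i+1}$ above $P$ correspond to the linear factors of $\mu$, not of $\Phi$, so the Hensel step only yields \emph{complete splitting}: there are $[F_{i+1}:F_i]$ rational places above $P$, not necessarily $q^{n-1}$ of them. You are implicitly assuming $\Phi$ is irreducible over $F_i$, which is exactly what the subsequent degree count is supposed to derive; the argument is therefore circular. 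In effect your strengthened induction hypothesis claims $[F_{i+1}:F_1]=q^{(n-1)i}$, and the place-counting ``lower bound'' $[F_{i+1}:F_1]\ge q^{(n-1)i}$ is a tautology once you replace the unjustified ``$q^{n-1}$ places above $P$'' by the correct ``$[F_{i+1}:F_i]$ places above $P$.'' The proposition does not actually require $[F_{i+1}:F_i]=q^{n-1}$: the paper establishes $[F_{i+1}:F_i]>1$ directly by tracking the place $[x_1=\infty]$ through the pyramid of Figure~\ref{fig:five} using Figure~\ref{fig:three} and Abhyankar's lemma (one side of each rectangle in the pyramid is unramified, so Abhyankar applies), obtaining $e=q^j$ in every extension $F_{i+1}/F_i$; and it obtains (i) exactly as you do, from the complete splitting of $[x_1=\beta]$ (it cites \cite[Proposition 3.9.6]{stich}, the compositum form of your Hensel computation).

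There is also a smaller soft spot in your treatment of (iii). You invoke Abhyankar's lemma at a place over $[x_i=0]$, where the ramification index in $K(x_i,x_{i+1})/K(x_i)$ is $q^{j-1}N_k$ (Figure~\ref{fig:four}); this is a mixed tame--wild index, and Abhyankar's lemma as stated (Proposition~\ref{prop:abhyankar}) requires one side of the compositum to be tame, so it does not directly apply to the full index. The paper instead uses the place over $[x_1=\infty]$, already established to be ramified with $e=q^j>1$ in each step, to feed Hurwitz' genus formula. If you fix the main gap by dropping the exact-count claim and adding the ramification-at-infinity argument, the remainder of your proof is essentially the paper's.
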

\begin{proof}
Equation\,(\ref{eq:defining eq F}) is a separable equation for $x_{i+1}$ over $F_i$, hence $F_{i+1}/F_i$ is separable. Let $P$ be a place of $F_{i+1}$ which lies above the place $[x_1=\infty]$ of $F_1$. From Figure \ref{fig:three} we have ramification indices and different exponents in the pyramid as shown in Figure \ref{fig:six}.

\begin{figure}[h]
\begin{center}
\resizebox{\textwidth}{!}{
\makebox[\width][c]{
\xymatrix@=3pc@dr{
P \ar@{-}|{e=1}[r]+0 \ar@{-}_{\substack{e=q^j\\d=q^n+q^j-2}}[d]+0 & \ar@{.}[]+0;[rr]+0  \ar@{-}|{e=q^j}[]+0;[d]+0 && \ar@{-}|{e=1}[]+0;[r]+0 & \ar@{-}|{e=1}[]+0;[r] \ar@{-}|{e=q^j}[]+0;[d]&[x_{i+1}=\infty]\\
\ar@{-}|{e=1}[]+0;[r]+0 \ar@{.}[]+0;[dd]+0 &\ar@{.}[]+0;[rr]+0 \ar@{.}[]+0;[dd]+0 && \ar@{-}|{e=1}[]+0;[r]&[x_i=\infty]\\
&&&\\
\ar@{-}|{e=1}[]+0;[r]+0 \ar@{-}_{\substack{e=q^j\\d=q^n+q^j-2}}[]+0;[d]+0& \ar@{-}^{\substack{e=q^j\\d=q^n+q^j-2}}[]+0;[d]& \\
 \ar@{-}|{e=1}[]+0;[r] \ar@{-}_{\substack{e=q^j\\d=q^n+q^j-2}}[]+0;[d] & [x_2=\infty] \\
[x_1=\infty]\\
}
}
}
\end{center}
\caption{Ramification over $[x_1=\infty]$.\label{fig:six}}
\end{figure}
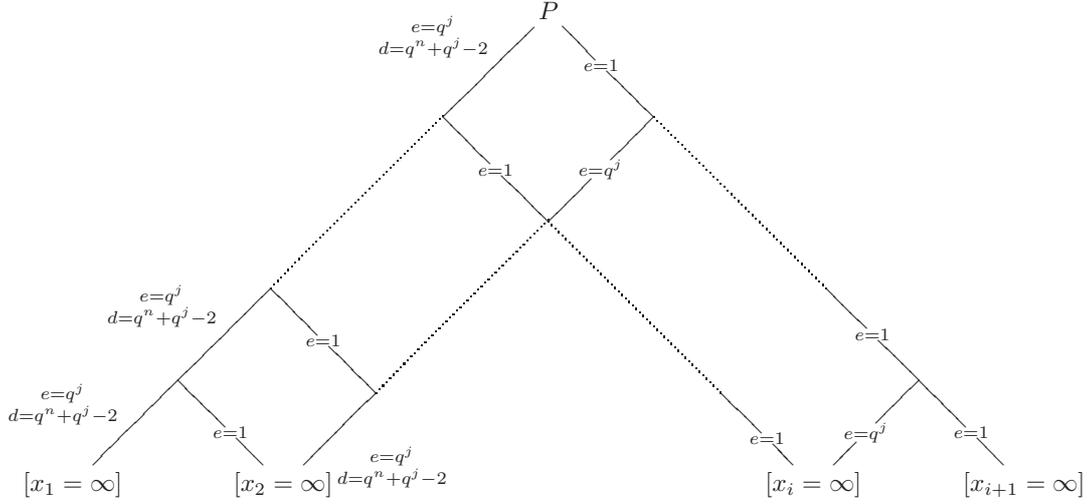

\noindent
As the ramification index of $P$ over $F_i$ is $e=q^j>1$, it follows that $F_i \subsetneqq F_{i+1}$. Let $\beta \in \fl^{\times}$. We show by induction that the rational place $[x_1=\beta]$ splits completely in $F_i/F_1$, for all $i\ge 2$. For $i=2$ this holds by Proposition \ref{prop:splitting}. Assume that the claim holds for $F_i$, and let $P_i$ be a place of $F_i$ lying above $[x_1=\beta]$. Again by Proposition \ref{prop:splitting}, the restriction of $P_i$ to $K(x_i)$ is of the form $[x_i=\beta']$ with some $\beta' \in \fl^{\times}$, and the place $[x_i=\beta']$ splits completely in $K(x_i,x_{i+1})/K(x_i)$. Hence $P_i$ splits completely in $F_{i+1}/F_i$, see \cite[Proposition 3.9.6]{stich}.

\vspace{.2cm} \noindent
We conclude that $F_i$ has places with residue class field $K$, so $K$ is the full constant field of $F_i$. Thus we have shown items (i) and (ii). By Corollary \ref{cor:genus F}, the genus of $F_2=K(x_1,x_2)$ satisfies $g(F_2)\ge 1$.
Since there is some ramified place in every extension $F_{i+1}/F_i$, it follows from  Hurwitz' genus formula that $g(F_i)\to \infty$ as $i\to \infty$.
\end{proof}

\vspace{.1cm}
\noindent
As a consequence of the proof above we get:
\begin{corollary}\label{cor:rational places tower}
All places $[x_1=\beta]$ with $\beta \in \fl^{\times}$ split completely in $F_i/F_1$. In particular in the case $K=\fl$, the number $N(F_i)$ of rational places of $F_i/\fl$ satisfies the inequality\[N(F_i) \ge (\ell-1)\cdot [F_i:F_1].  \]
\end{corollary}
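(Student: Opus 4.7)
The complete splitting assertion is in fact already inside the induction carried out in the proof of Proposition \ref{prop:F is tower}, so my plan is simply to isolate that part of the argument and then translate it into the quantitative statement about $N(F_i)$. Concretely, I would do an induction on $i$: for $i=2$ the claim is exactly Proposition \ref{prop:splitting}; for the inductive step, take a place $P_i$ of $F_i$ lying over $[x_1=\beta]$ with $\beta\in\fl^\times$, observe that by induction $P_i$ has residue degree one and note (again by Proposition \ref{prop:splitting} applied to the basic function field $K(x_i,x_{i+1})$) that the restriction of $P_i$ to $K(x_i)$ is of the form $[x_i=\beta']$ with $\beta'\in\fl^\times$, and that such a place splits completely in the basic extension $K(x_i,x_{i+1})/K(x_i)$. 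Then the standard transfer lemma \cite[Proposition 3.9.6]{stich} propagates complete splitting from $K(x_i)$ up to $F_{i+1}/F_i$, and transitivity of complete splitting finishes the step.

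For the inequality, I would argue as follows. Assume now $K=\fl$. Complete splitting of $[x_1=\beta]$ in $F_i/F_1$ means there are exactly $[F_i:F_1]$ distinct places of $F_i$ lying above $[x_1=\beta]$, each with ramification index $1$ and residue degree $1$ over $[x_1=\beta]$. Since $[x_1=\beta]$ itself is a rational place of $F_1/\fl$, all these places of $F_i$ have residue field $\fl$ and are therefore rational. Letting $\beta$ range over the $\ell-1$ elements of $\fl^\times$ gives $(\ell-1)\cdot[F_i:F_1]$ rational places, and they are pairwise distinct because they lie over pairwise distinct places of $F_1$. This yields $N(F_i)\ge (\ell-1)[F_i:F_1]$.

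There is really no hard step here: the genuine content is the splitting behaviour in the basic function field, which has already been dealt with in Proposition \ref{prop:splitting}, and the rest is bookkeeping through the pyramid of Figure \ref{fig:five}. The only minor point that one has to be slightly careful about is making sure at each inductive stage that the restriction of $P_i$ to $K(x_i)$ is indeed of the form $[x_i=\beta']$ with $\beta'\in\fl^\times$ (and not, say, a zero or pole of $x_i$); but this is guaranteed by the second sentence of Proposition \ref{prop:splitting}, which keeps us inside the set of places indexed by $\fl^\times$ as we go up the tower.
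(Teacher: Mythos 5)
Your argument is exactly the one the paper uses: the paper derives this corollary directly from the induction already carried out inside the proof of Proposition \ref{prop:F is tower}, namely the base case via Proposition \ref{prop:splitting}, the propagation of the value $\beta'\in\fl^\times$ along the pyramid using the second sentence of that proposition, and \cite[Proposition 3.9.6]{stich} to lift complete splitting through the composite. The counting step giving $N(F_i)\ge(\ell-1)[F_i:F_1]$ is the same routine bookkeeping the paper leaves implicit.
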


\noindent
In order to prove Theorem \ref{thm3}, one needs an estimate for the genus $g(F_i)$, as $i\to \infty$. Since ramification indices, different exponents and genera are invariant under separable constant field extensions, we will assume from now on that \[K=\flc \makebox{ \ is algebraically closed.}\] Then all places of $F_i/K$ are rational.

\begin{definition}\label{def:b bounded}
{\rm (i)}  Let $E/H$ be a separable extension of function fields over $K$, $P$ a place of $H$ and $b \in \mathbb{R}^{+}$. We say that $P$ is $b$-bounded in $E$ if for any place $P'$ of $E$ above $P$, the different exponent $d(P'|P)$ satisfies \[d(P'|P) \le b\cdot (e(P'|P)-1).\]
{\rm (ii)} Let $\mathcal H =(H_1, H_2, \dots )$ be a tower of function fields over $K$, $P$ a place of $H_1$ and $b \in \mathbb{R}^{+}$. We say that $P$ is $b$-bounded in $\mathcal H$, if it is $b$-bounded in all extensions $H_{i}/H_1$.

\end{definition}

\begin{proposition}\label{prop:b bounded}
Let $\mathcal H =(H_1, H_2, \dots )$ be a tower of function fields over $K$, with $g(H_1)=0$. Assume that $P_1,\dots,P_r$ are places of $H_1$ and $b_1,\dots,b_r \in \mathbb{R}^{+}$ are positive real numbers such that the following hold:
\begin{enumerate}
\item[{\rm (i)}] $P_s$ is $b_s$-bounded in $\mathcal H$, for $1 \le s \le r$.
\item[{\rm (ii)}] All places of $H_1$, except for $P_1,\dots,P_r$, are unramified in $H_i/H_1$.
\end{enumerate}
Then the genus $g(H_i)$ is bounded by
\[g(H_i)-1 \le \left(-1+\frac12 \sum_{s=1}^r b_s\right)\cdot [H_i:H_1].\]
\end{proposition}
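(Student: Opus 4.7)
The plan is to apply Hurwitz's genus formula directly to the extension $H_i/H_1$ and then control the degree of the different $\diff(H_i/H_1)$ using the $b_s$-boundedness hypothesis. Since $g(H_1)=0$, Hurwitz gives
$$2g(H_i)-2 \;=\; -2\,[H_i:H_1] + \deg \diff(H_i/H_1),$$
so everything reduces to an upper bound on $\deg \diff(H_i/H_1)$.

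By hypothesis (ii), the only places of $H_1$ that contribute to $\diff(H_i/H_1)$ are $P_1,\ldots,P_r$; indeed, a place $P'$ of $H_i$ lies in the support of $\diff(H_i/H_1)$ only if its restriction to $H_1$ ramifies. Hence
$$\deg \diff(H_i/H_1) \;=\; \sum_{s=1}^{r}\; \sum_{P'\mid P_s} d(P'|P_s)\cdot \deg P'.$$
For each fixed $s$, the $b_s$-boundedness of $P_s$ in $\mathcal H$ gives $d(P'|P_s)\le b_s(e(P'|P_s)-1)$ for every place $P'$ of $H_i$ above $P_s$. Writing $e'=e(P'|P_s)$ and $f'=f(P'|P_s)$, so that $\deg P' = f'\deg P_s$, the local contribution is bounded by
$$\sum_{P'\mid P_s} d(P'|P_s)\deg P' \;\le\; b_s\deg P_s\sum_{P'\mid P_s} (e'-1)f'.$$
Now I use the fundamental equality $\sum_{P'\mid P_s} e'f' = [H_i:H_1]$, which yields $\sum_{P'\mid P_s}(e'-1)f' \le [H_i:H_1]$. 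Under the implicit convention $\deg P_s = 1$ (the proposition is applied with $K=\flc$, where all places are rational), this gives
$$\deg \diff(H_i/H_1) \;\le\; [H_i:H_1]\cdot \sum_{s=1}^{r} b_s.$$

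Substituting back into Hurwitz and dividing by $2$ yields the desired inequality
$$g(H_i)-1 \;\le\; \Bigl(-1 + \tfrac{1}{2}\sum_{s=1}^{r} b_s\Bigr)\cdot [H_i:H_1].$$
There is no real obstacle here; the argument is essentially an accounting with Hurwitz's formula. The only step requiring a moment of care is the use of $\sum (e'-1)f' \le \sum e'f' = [H_i:H_1]$, which is the (admittedly crude but sufficient) inequality that makes the bound depend only on $\sum b_s$ and the degree $[H_i:H_1]$, without needing to know the precise splitting behaviour of the $P_s$ in $H_i$.
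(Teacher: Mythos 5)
Your argument is correct and is exactly the accounting the paper intends when it calls the statement ``an immediate consequence of Hurwitz' genus formula'': Hurwitz over $H_1$ of genus $0$, restrict the support of the different to the $P_s$ via hypothesis (ii), bound each local different exponent by hypothesis (i), and close with $\sum_{P'\mid P_s}(e'-1)f'\le\sum_{P'\mid P_s}e'f'=[H_i:H_1]$. Your remark that one needs $\deg P_s=1$ (true in the paper's setting, since the proposition is invoked after passing to $K=\flc$) is a fair caveat about the statement as written, not a gap in your proof.
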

\begin{proof}
This is an immediate consequence of Hurwitz' genus formula, see also \cite{recent}.
\end{proof}

\vspace{.2cm}
\noindent
We want to apply Proposition \ref{prop:b bounded} to the tower $\mathcal F$. By Proposition
 \ref{prop:ramification}, only the places $[x_1=0]$ and $[x_1=\infty]$ of $F_1=K(x_1)$ are ramified in $\mathcal F$ (see Figures \ref{fig:two}, \ref{fig:three} and \ref{fig:four}). From Figure \ref{fig:six} and the transitivity of different exponents, the place $[x_1=\infty]$ is $b_{\infty}$-bounded with
\begin{equation}\label{binfty}
b_\infty:=\frac{q^n-1}{q^j-1}+1.
\end{equation}

\noindent \fbox{Main Claim}  
\ \ \ {\it The place $[x_1=0]$ is $b_{0}$-bounded with}
\begin{equation}\label{bzero}
b_0:=\frac{q^n-1}{q^k-1}+1.
\end{equation}

\noindent
Assuming this claim, Proposition\,\ref{prop:b bounded} yields the estimate
\begin{equation}
g(F_i)-1 \le  \left(-1+\frac12 \bigl(b_0+b_\infty \bigr)\right)[F_i:F_1] \
= \ \frac{[F_i:F_1]}{2} \left( \frac{q^n-1}{q^k-1}+\frac{q^n-1}{q^j-1} \right).\label{eq:genus bound}
\end{equation}
For the tower $\mathcal F$ over the field $K=\fl$, we combine Equation\,(\ref{eq:genus bound}) with
Corollary\,\ref{cor:rational places tower} and then we obtain for all $i\ge 2$,
\[\dfrac{N(F_i)}{g(F_i)-1}\ \ge \  2\,\left(\frac{1}{q^j-1}+\frac{1}{q^k-1}\right)^{-1}.\]
Letting $i \to \infty$, this gives a lower bound for the limit $\lambda(\mathcal F)=\lim_{i\to\infty} N(F_i)/g(F_i)$,
\[\lambda(\mathcal F) \ge  2\,\left(\frac{1}{q^j-1}+\frac{1}{q^k-1}\right)^{-1}, \]
and thus proves Theorem\,\ref{thm3}.

\vspace{.2cm}
\noindent
So it remains to prove the Main Claim, which means: for every $i \ge 1$ and every place $\widetilde{P}$ of  $F_{i+1}$ lying above the place $[x_1=0]$, we have to estimate the different exponent $d(\widetilde{P}|[x_1=0])$.

\vspace{.2cm}
\noindent
The restriction of $\widetilde{P}$ to the rational subfield $K(x_{i+1})$ is either the place $[x_{i+1}=0]$ or $[x_{i+1}=\infty]$, as follows from Figures \ref{fig:two}, \ref{fig:three} and \ref{fig:four}. In the case $[x_{i+1}=0]$, the place $\widetilde{P}$ is a zero of all $x_h$ with $1\le h \le i+1$, and we see from
Figure \ref{fig:two} that $\widetilde{P}$ is unramified over $[x_1=0]$; hence we have
$0 = d(\widetilde{P}|[x_1=0]) \le b \cdot (e(\widetilde{P}|[x_1=0])-1)$  for every $b \in \mathbb{R}^{+}$.

\vspace{.2cm} \noindent
The non-trivial case is when $\widetilde{P}$ is a pole of $x_{i+1}$. Then there exists a unique $m \in \{1,\dots,i\}$ such that $\widetilde{P}$ is a zero of $x_m$ and a pole of $x_{m+1}$. The situation is shown in Figure\,\ref{fig:seven}. The question marks indicate that one cannot read off the ramification index and different exponent from ramification data in the basic function field, since both \textquoteleft lower\textquoteright \ ramifications are wild and therefore Abhyankar's lemma does not apply.

\begin{figure}[h]
\begin{center}
\scalebox{1.2}{
\makebox[\width][c]{
\def\objectstyle{\scriptstyle}
\xymatrix@!=2.3pc@dr{
&&&&&\\
\ar@{.}[r] &\ar@{-}^{?}[r] \ar@{.}[u]& \ar@{-}^{?}[r] \ar@{.}[u]&\ar@{-}|{e=1} [r]\ar@{.}[u]& [x_{m+2}=\infty] \ar@{.}[u]\\
\ar@{.}[r] &\ar@{-}^{?}[r] \ar@{-}^{?}[u]&\ar@{-}^{?}[u] \ar@{-}|{e=q^{k-1}N_j}[r]& [x_{m+1}=\infty] \ar@{-}|{e=q^j}[u]\\
\ar@{.}[r]& \ar@{-}|{e=q^k}[r] \ar@{-}^{?}[u]& [x_m=0] \ar@{-}|{e=q^{j-1}N_k}[u]\\
\ar@{.}[r]& [x_{m-1}=0] \ar@{-}|{e=1}[u] \\
}
}
}
\end{center}
\caption{Ramification of $\tilde P$ over $[x_1=0]$: the non-trivial case.\label{fig:seven}}
\end{figure}
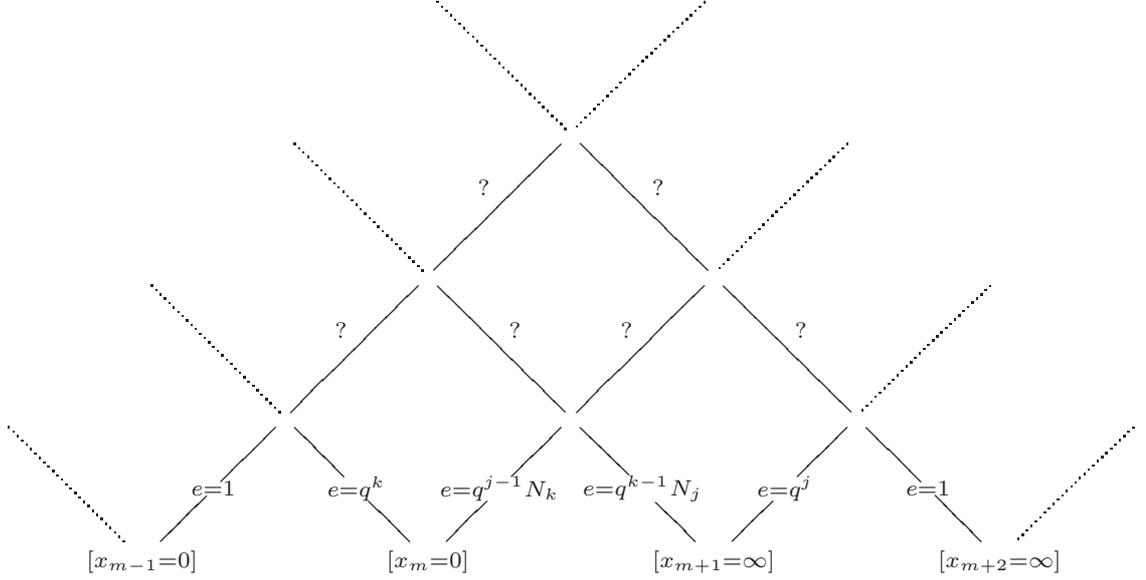

\vspace{.2cm}
\noindent
In order to analyze this situation, we introduce the `$u$-subtower' of $\mathcal F$. Let $u_i \in K(x_i,x_{i+1})$ be the unique element which satisfies the conditions (see
Proposition \ref{prop:rs})
\begin{equation*} \tr_k(u_i)+\alpha=\frac{x_{i+1}}{x_i^{q^k}} \makebox{ \ and \ }
\tr_j(u_i)=-\frac{x_{i+1}^{q^j}}{x_i} \, . \end{equation*}
 We set $z_i:=-x_i^{q^n-1}$ and we then have for all $i\ge 1$ the equations
\begin{equation}\label{eq:subtower E}
z_{i+1}=-x_{i+1}^{q^n-1}=\dfrac{\tr_j(u_{i+1})}{(\tr_k(u_{i+1})+\alpha)^{q^j}}=\dfrac{\tr_j(u_i)^{q^k}}{\tr_k(u_i)+\alpha}\, .
\end{equation}
Equation\,(\ref{eq:subtower E}) defines a subtower $\mathcal E = (E_1 \subseteq E_2 \subseteq \dots)$ of $\mathcal F$ (see Figure\,\ref{fig:eight}), where \[E_i:=K(u_1,u_2,\dots,u_i) \, .\]

\begin{figure}[h]
\begin{center}
\scalebox{1.0}{
\makebox[\width][c]{
\xymatrix@!=2.5pc@dr{
&&&&&\\
F_{4}=E_3(x_1) \ar@{-}[r] \ar@{.}[u]& E_3=K(u_1,u_2, u_3) \ar@{-}[r] \ar@{.}[u]&K(u_2,u_3)\ar@{-}[r]\ar@{.}[u]& K(u_3) \ar@{.}[u] \ar@{-}[r]& \\
F_{3}=E_2(x_1) \ar@{-}[r] \ar@{-}[u]& E_2=K(u_1,u_2) \ar@{-}[u] \ar@{-}[r]&  K(u_2) \ar@{-}[u] \ar@{-}[r] & K(z_3) \ar@{-}[u]\\
F_2=E_1(x_1) \ar@{-}[r] \ar@{-}[u]& E_1=K(u_1) \ar@{-}[u] \ar@{-}[r]&K(z_2) \ar@{-}[u]\\
F_1 \ar@{-}[u]
}
}
}
\end{center}
\caption{The subtower $\mathcal E = (E_1 \subseteq E_2 \subseteq \dots)$.\label{fig:eight}}
\end{figure}
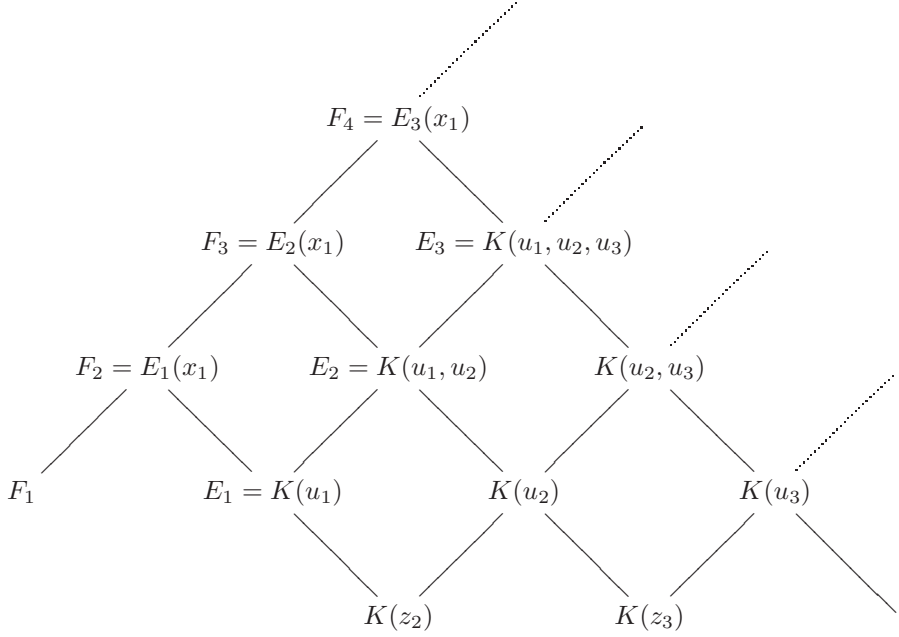

\noindent
By Proposition\,\ref{prop:rs} we know that $F_2=K(x_1,x_2)=K(x_1,u_1)$, and it follows by induction that \[F_{i+1}=E_i(x_1) \makebox{ \ for all \ $i\ge 1$.}\]

\noindent
Let $\widetilde{P}$ be a place of the field $F_{i+1}$ as in Figure\,\ref{fig:seven} (the `non-trivial case'), and let $P$ be the restriction of $\widetilde{P}$ to the subfield $E_i$. The restrictions of $P$ to the subfields $K(u_1),\dots,K(u_i)$ are
\begin{eqnarray}
& &\left[u_s=\gamma_s\right] \makebox{ \ with \ $\gamma_s\in\Gamma$ \ for \ $1\le s \le m-1,$} \notag\\
& &\left[u_m=\infty\right],  \makebox{ and }                                           \label{eq:restriction u} \\
& &\left[u_s=\delta_s\right] \makebox{ \ with\ $\delta_s\in\Delta$ \ for \ $m+1\le s \le i.$} \notag
\end{eqnarray}

\newpage

\noindent
\fbox{The case {\bf $ m=1$.}}

\vspace{.1cm} \noindent We will see that this case already comprises most problems that occur in the general case. The situation is shown in Figure \ref{fig:nine}, where ramifications come from Figures \ref{fig:three} and \ref{fig:four}.

\begin{figure}[h]
\begin{center}
\scalebox{1.3}{
\makebox[\width][c]{
\def\objectstyle{\scriptstyle}
\xymatrix@!@=8pt{
&&&&&\\
[u_1=\infty] \ar@{-}[ur] && [u_2=\delta_2] \ar@{-}[ur] \ar@{-}[ul]&&[u_3=\delta_3] \ar@{.}[ur] \ar@{-}[ul]  \\
&[z_2=\infty] \ar@{-}|{\substack{e=q^{k-1}(q^j-1)\\d=q^{n-1}-2}}[ul] \ar@{-}|{e=d=q^j}[ur] && [z_3=\infty] \ar@{-}|{e=d=q^j}[ur] \ar@{-}|{e=1}[ul]&&[z_4=\infty] \ar@{-}|{e=1}[ul]\\
}
}
}
\end{center}
\caption{The case $m=1$.\label{fig:nine}}
\end{figure}
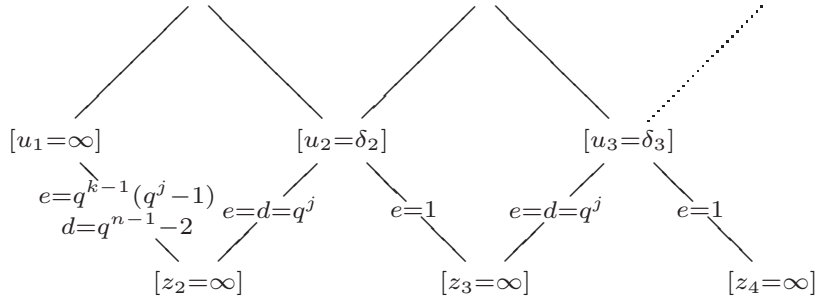

\vspace{.2cm} \noindent
Ramification indices and different exponents do not change under completion; we will therefore  replace the fields $F_s, E_s, K(u_s)$ etc. by their completions $\widehat{F}_s, \widehat{E}_s, \widehat{K(u_s)}$ etc.~(of course, completions are understood at the restrictions of $\widetilde{P}$ to the corresponding fields). As the field $K$ is assumed to be algebraically closed, the ramification indices are then equal to the degrees of the corresponding field extensions.
To simplify notation, we set
\begin{equation}\label{eq:simpler notation}
u:=u_1, \ \  z:=z_2,\ \ H:=\widehat{K(z)}, \makebox{ and } \ E:=\widehat{E}_1=\widehat{K(u)}=H(u).
\end{equation}

\noindent
The next two propositions are of vital importance for the proof of the Main Claim.

\begin{proposition}\label{prop:EtH}
There exists an element $t\in E$ such that
\begin{equation} t^{q^j-1}=z^{-1}. \end{equation}
 The extension $H(t)/H$ is cyclic of degree $[H(t):H]=q^j-1$, and the extension $E/H(t)$ is Galois of degree $[E:H(t)]=q^{k-1}$. The ramification indices and different exponents in the extensions $E \supseteq H(t) \supseteq H$ are as shown in Figure\,{\rm \ref{fig:ten}}.
\end{proposition}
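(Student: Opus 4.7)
The argument splits into three parts: construct $t$, determine the structure of the tower $H \subset H(t) \subset E$, and compute the different exponents.

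For the construction of $t$: Figure~\ref{fig:four} gives $e(E/H) = q^{n-1}-q^{k-1} = q^{k-1}(q^j-1)$, and $z^{-1}$ is a uniformizer of $H$, so $v_E(z^{-1}) = q^{k-1}(q^j-1)$. Writing $z^{-1} = \pi^{q^{k-1}(q^j-1)}\eta$ with $\pi$ a uniformizer of $E$ and $\eta$ a unit, the polynomial $T^{q^j-1}-\eta$ reduces modulo the maximal ideal to $T^{q^j-1}-\bar\eta$, which splits into distinct linear factors over the algebraically closed residue field $K$ (using $\gcd(q^j-1,p)=1$). Hensel's lemma (Proposition~\ref{prop:hensel}) then produces $\xi \in E$ with $\xi^{q^j-1}=\eta$, so $t:=\pi^{q^{k-1}}\xi$ satisfies $t^{q^j-1}=z^{-1}$. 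Since $v_H(z^{-1})=1$ is coprime to $q^j-1$ and $K$ contains all $(q^j-1)$-th roots of unity, Kummer theory yields $[H(t):H]=q^j-1$ with $H(t)/H$ cyclic and totally ramified, having $t$ as a uniformizer. Multiplicativity then forces $[E:H(t)]=q^{k-1}$.

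The principal obstacle is showing that $E/H(t)$ is Galois, since its degree is a pure $p$-power. Using $t^{q^j-1}=z^{-1}$, the defining relation~\eqref{eq:subtower E} for $u$ rewrites as $L(u)=\alpha$, where
\[
L(U) \;:=\; t^{q^j-1}\bigl(\tr_j(U)\bigr)^{q^k} - \tr_k(U) \;\in\; H(t)[U].
\]
A direct differentiation shows $L'(U)=-1$ (since $q^k \equiv 0 \pmod p$), so $L$ is a separable $\mathbb{F}_q$-linearized polynomial of degree $q^{n-1}$; its zero set $V:=\ker L$ is an $\mathbb{F}_q$-vector space of dimension $n-1$ in any algebraic closure, and the complete root set of $L(U)-\alpha$ is $u+V$. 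Any $v \in V\cap E$ furnishes an $H(t)$-automorphism $u \mapsto u+v$ of $E$, so it suffices to exhibit a $(k-1)$-dimensional $\mathbb{F}_q$-subspace of $V$ lying inside $E$. The plan is to construct such $v$ as Laurent series in the uniformizer $u^{-1}$ of $E$, recursively solving the equivalent equation $(\tr_j(v))^{q^k} = z\,\tr_k(v)$ coefficient-by-coefficient; the coprimality hypotheses $\gcd(j,k)=1$ and $p \nmid j$ from~\eqref{eq:relprime} enter precisely to guarantee that this recursion admits exactly $q^{k-1}$ solutions in $E$.

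For the different exponents: $H(t)/H$ is tame of degree $q^j-1$, giving $d(H(t)/H)=q^j-2$, while $d(E/H)=q^{n-1}-2$ is recorded in Figure~\ref{fig:four}. Transitivity of different exponents \cite[Corollary 3.4.12]{stich} applied to $H \subset H(t) \subset E$ yields
\[
d(E/H(t)) \;=\; d(E/H) - e(E/H(t))\cdot d(H(t)/H) \;=\; (q^{n-1}-2) - q^{k-1}(q^j-2) \;=\; 2(q^{k-1}-1),
\]
which matches the data of Figure~\ref{fig:ten}.
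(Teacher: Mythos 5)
Your construction of $t$, your use of Kummer theory for $H(t)/H$, and your computation of the different exponent of $E/H(t)$ via transitivity all follow the paper's route and are correct. The gap lies in the argument that $E/H(t)$ is Galois.

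The claim that an arbitrary $v \in V\cap E$ ``furnishes an $H(t)$-automorphism $u \mapsto u+v$ of $E$'' is not justified. For such a map to define an $H(t)$-embedding of $E$, the element $u+v$ must be a root of the \emph{minimal polynomial} of $u$ over $H(t)$; knowing only that $u+v$ lies in $E$ and is a root of the degree-$q^{n-1}$ polynomial $L(U)-\alpha$ says nothing of the sort. It is consistent with your hypotheses that $u+v$ be a root of a different irreducible factor of $L(U)-\alpha$ over $H(t)$, or even that $u+v\in H(t)$ (e.g.\ if $v=-u+h$ with $h\in H(t)$). Consequently, even if you succeeded in exhibiting a $(k-1)$-dimensional $\mathbb{F}_q$-subspace of $V\cap E$, the Galois conclusion would not follow from that alone. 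Moreover, this crucial subspace is only promised by a ``plan'' (a Laurent-series recursion in $u^{-1}$) rather than supplied by an actual argument.

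The paper repairs exactly these two points. First, it locates the $q^{k-1}$ translations not merely in $E$ but in the smaller field $H$: applying Hensel's lemma to $\psi(T)=z^{-1}\tr_j(T)^{q^k}-\tr_k(T)$ over the complete field $H$ shows that $V\cap H$ already has $q^{k-1}$ elements. Second, to conclude that $u\mapsto u+\Theta$ (with $\Theta\in V\cap H$) genuinely is an $H(t)$-automorphism, the paper exploits the Hensel factorization $\varphi=\varphi_1\varphi_2$ over $H$, where $\varphi_1$ has all its roots in $H$: since $u+\Theta\notin H$, the element $u+\Theta$ must be a root of $\varphi_2$, the minimal polynomial of $u$ over $H$. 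This produces an $H$-automorphism $\sigma$ of $E$, of order $p$; because $\sigma$ preserves $H(t)$ (as $\sigma(t)^{q^j-1}=\sigma(z^{-1})=z^{-1}$, so $\sigma(t)$ is a root-of-unity multiple of $t$) and $[H(t):H]=q^j-1$ is prime to $p$, the restriction $\sigma|_{H(t)}$ is trivial. You will need both of these steps — solving for translations in $H$ rather than in $E$, and the factorization-plus-order argument — to make the Galois claim rigorous.
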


\begin{figure}[h]
\begin{center}
\scalebox{0.9}{
\makebox[\width][c]{
\xymatrix@=4pc{
E=H(u) \ar@{-}^{\quad \txt{$e=q^{k-1}$ \\ $d=2(q^{k-1}-1)=2(e-1)$}}_{\txt{galois of \\ degree $q^{k-1}$}\quad}[d] \\
H(t) \ar@{-}^{\quad \txt{$e=q^{j}-1$\\ $d=q^j-2$}}_{\txt{cyclic of \\ degree $q^j-1$}\quad}[d]\\
H
}
}
}
\end{center}
\caption{The intermediate field $H(t)$.\label{fig:ten}}
\end{figure}

\begin{proof} Notations as in Equation\,(\ref{eq:simpler notation}).
The extension $E/H$ of degree $[E:H]=q^{k-1}(q^j-1)$ is totally ramified, $z^{-1}$ is a prime element of $H$ and $u^{-1}$ is a prime element of $E$. Hence we have \[z^{-1}=\epsilon \cdot \left(u^{-1}\right)^{q^{k-1}(q^j-1)}\]
with a unit $\epsilon \in E$. As an easy consequence of Hensel's lemma, we can write $\epsilon$ as
\[\epsilon=\epsilon_0^{q^j-1} \makebox{\ with a unit \ $\epsilon_0 \in E$.}\]
Then the element
$t:=\epsilon_0 \cdot \left(u^{-1}\right)^{q^{k-1}}$
satisfies the equation
$t^{q^j-1}=z^{-1}.$

\vspace{.2cm} \noindent
It is clear that $H(t)/H$ is a cyclic extension of degree $q^j-1$, and hence the degree of the field
extension $E/H(t)$ is  $[E:H(t)]=q^{k-1}$.

\vspace{.2cm} \noindent
Next we show that  $E/H(t)$ is Galois. From Equation\,(\ref{eq:subtower E}) follows that  $u$ is a root of
the polynomial \[\varphi(T):=z^{-1}\cdot\tr_j(T)^{q^k}-\left(\tr_k(T)+\alpha\right) \in H[T].\]
Its reduction $\varphi^*(T)$ modulo the valuation ideal of $H$ is the polynomial \[\varphi^*(T)=-\left(\tr_k(T)+\alpha\right) \in K[T].\]
We set $\eta_1(T):=\tr_k(T)+\alpha$ and $\eta_2(T):=-1$, then Hensel's lemma gives a factorization
\[\varphi(T)=\varphi_1(T)\cdot \varphi_2(T) \makebox{\ with \ } \varphi_1(T), \varphi_2(T) \in H[T],\]
$\varphi_1(T)$ is monic of degree $q^{k-1}$ and with reduction $\varphi_1^*(T)=\tr_k(T)+\alpha$. Again by Hensel's lemma, the polynomial $\varphi_1(T)$ splits into linear factors over $H$. As $u \not\in H$ is a root of $\varphi(T)$, it follows that $\varphi_2(u)=0$. The degree of the field extension $E=H(u)$ over $H$ is
\[ [E:H]=q^{n-1}-q^{k-1}=\deg \varphi_2(T), \]
and therefore the monic polynomial $z\cdot \varphi_2(T) \in H[T]$ is the minimal polynomial of $u$ over $H$.

\vspace{.2cm} \noindent
We can construct some other roots of $\varphi_2(T)$ in $E$ as follows. By Hensel's lemma, the polynomial
\[\psi(T):=z^{-1}\cdot \tr_j(T)^{q^k}-\tr_k(T) \in H[T]\]
has $q^{k-1}$ distinct roots $\Theta \in H$. For any such $\Theta$ we have
\begin{equation*}
\varphi(u+\Theta)  =  z^{-1}\cdot\tr_j(u+\Theta)^{q^k}-\left(\tr_k(u+\Theta)+\alpha\right)
  =  \varphi(u)+\psi(\Theta)=0.
\end{equation*}
Since $u+\Theta \not\in H$, we conclude that $u+\Theta$ is a root of $\varphi_2(T)$; hence we obtain an automorphism of the field $E$ over $H$ by setting $u \mapsto u+\Theta.$
For $\Theta \neq 0$, this automorphism has order $p={\rm char}(K)$. As $[H(t):H]=q^{j}-1$ is relatively prime to $p$, the restriction of this automorphism to $H(t)$ is the identity. We have thus constructed $q^{k-1}$ distinct automorphisms of $E$ over $H(t)$. This proves that the extension $E/H(t)$ is Galois, since its degree is $[E:H(t)]=q^{k-1}$.

\vspace{.2cm}
\noindent
The different exponent of $E/H$ is $q^{n-1}-2$, see Figure \ref{fig:nine}. Since $H(t)/H$  is tamely ramified with different exponent $q^j-2$, one obtains easily that $E/H(t)$ has different exponent $2(q^{k-1}-1)$, by  transitivity of the different.
\end{proof}

\vspace{.2cm}
\noindent
Note that we are still considering the case $m=1$ with completions at the corresponding places. We define now subfields $G_s \subseteq \widehat{E}_s$  (see Figure \ref{fig:eleven}) by setting:
\[G_1:=H(t), \ \makebox{ and } \ G_{s+1}:=G_s(u_{s+1}) \makebox{\ for \ } s \ge 1.\]

\begin{figure}[h]
\begin{center}
\resizebox{\textwidth}{!}{
\makebox[\width][c]{
\def\objectstyle{\scriptstyle}
\xymatrix@!=1.45pc@dr{
&&&&\\
\widehat{E_3}\ar@{-}[r] \ar@{.}[u]&G_3\ar@{-}|{e=q^j-1}[r] \ar@{.}[u]& \ar@{-}|{e=1}[rr] \ar@{.}[u]&&\widehat{K(u_3)} \ar@{.}[u]\\
&&&&\\
\widehat{E_2} \ar@{-}[r] \ar@{-}[uu]& G_2 \ar@{-}[uu] \ar@{-}|{e=q^j-1}[r]&\widehat{K(u_2)} \ar@{-}|{e=d=q^j}[uu] \ar@{-}|{e=1}[rr]&&\widehat{K(z_3)} \ar@{-}|{e=d=q^j}[uu]\\
&&\\
\widehat{E_1}=\widehat{K(u_1)} \ar@{-}[r] \ar@{-}[uu]& G_1=H(t) \ar@{-}[uu] \ar@{-}|{e=q^j-1}[r]&H=\widehat{K(z_2)} \ar@{-}|{e=d=q^j}[uu]
}
}
}
\end{center}
\caption{The subfields $G_s$.\label{fig:eleven}}
\end{figure}
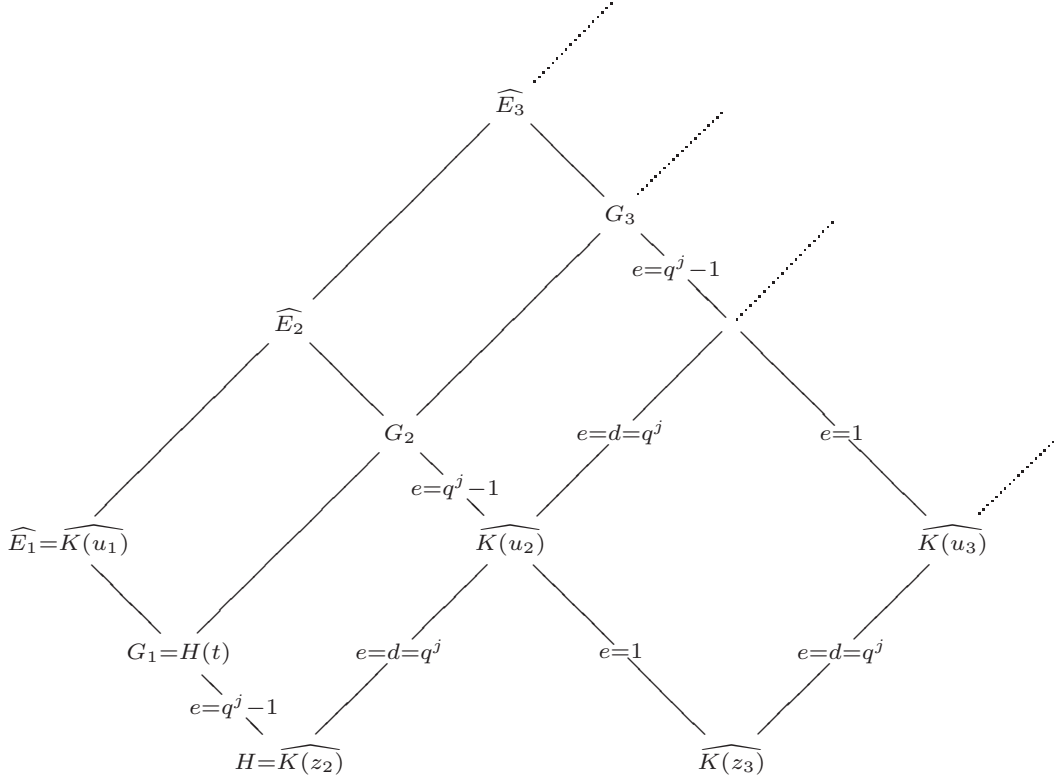

\noindent
Ramification indices and different exponents in Figure\,\ref{fig:eleven} can be read off from
Figures \ref{fig:nine} and \ref{fig:ten}. From this it follows in particular that $\widehat{K(u_2)}=\widehat{K(z_3)}$, $\widehat{K(u_2,u_3)}=\widehat{K(u_3)}=\widehat{K(z_4)}$, etc.

\begin{proposition}\label{prop:G s+1/G s}
For all $s\ge 1$, the extension $G_{s+1}/G_s$ is Galois of degree $q^j$. The ramification index of $G_{s+1}/G_s$ is $e=q^j$, and the different exponent is $d=2(q^j-1)=2(e-1)$.
\end{proposition}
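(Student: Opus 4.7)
The plan is to mirror the structure of Proposition \ref{prop:EtH} and proceed by induction on $s$. The key preparatory step is the inductive identification, at the place $\widetilde{P}$, that
\[G_s=\widehat{K(u_s)}(t)=\widehat{K(z_{s+1})}(t)\quad\text{and}\quad G_{s+1}=\widehat{K(z_{s+2})}(t),\]
where the second equality uses the unramified edge $[u_r=\delta_r]\,|\,[z_{r+1}=\infty]$ from Figure \ref{fig:three}, so locally $\widehat{K(u_r)}=\widehat{K(z_{r+1})}$. The base case $s=1$ is the definition of $G_1=H(t)$, and the inductive step is immediate from $G_{s+1}=G_s(u_{s+1})$ together with $u_{s+1}\in\widehat{K(u_{s+1})}$. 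Since $\nu_{\widehat{K(z_{s+1})}}(z_2^{-1})$ equals the product of the wild ramifications $[z_{r+1}=\infty]|[z_r=\infty]$ from Figure \ref{fig:three}, it is a power of $p$ and therefore coprime to $q^j-1$; consequently $G_s/\widehat{K(z_{s+1})}$ is a tame totally ramified Kummer extension of degree $q^j-1$ with different exponent $q^j-2$.

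The degree and ramification of $G_{s+1}/G_s$ then follow by applying Abhyankar's lemma (Proposition \ref{prop:abhyankar}) to $G_{s+1}=G_s\cdot\widehat{K(z_{s+2})}$: the extension $\widehat{K(z_{s+2})}/\widehat{K(z_{s+1})}$ is totally wildly ramified of degree $q^j$ with different $q^j$ (Figure \ref{fig:three}), while $G_s/\widehat{K(z_{s+1})}$ is tame. This yields $e(G_{s+1}/\widehat{K(z_{s+1})})=q^j(q^j-1)$, and hence $[G_{s+1}:G_s]=e(G_{s+1}/G_s)=q^j$.

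The main obstacle is the Galois property. The element $u_{s+1}$ satisfies $\varphi(u_{s+1})=0$ over $\widehat{K(z_{s+1})}$, where $\varphi(T):=\tr_j(T)-z_{s+1}(\tr_k(T)+\alpha)^{q^j}$ comes from the $u_{s+1}$-form of Equation (\ref{eq:subtower E}); a Hensel factorization identifies the minimal polynomial of $u_{s+1}$ over $G_s$ as the degree-$q^j$ factor of $\varphi$ whose reduction is $(T-\delta_{s+1})^{q^j}$. A direct computation, using additivity of $q^j$-th powers in characteristic $p$, gives $\varphi(u_{s+1}+\Theta)=\varphi(u_{s+1})+\psi(\Theta)$ with $\psi(T):=\tr_j(T)-z_{s+1}\tr_k(T)^{q^j}$. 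Thus each $\Theta\in G_s$ with $\psi(\Theta)=0$ and $\Theta\to 0$ at $\widetilde{P}$ defines a $G_s$-automorphism $u_{s+1}\mapsto u_{s+1}+\Theta$ of $G_{s+1}$. The heart of the argument is to exhibit $q^j$ such $\Theta$'s in $G_s$: substituting $T=\pi\tilde T$ for a uniformizer $\pi$ of $G_s$ and dividing by $\pi$ converts $\psi$ into a polynomial in $\mathcal{O}_{G_s}[\tilde T]$ whose reduction to the algebraically closed residue field $K$ is
\[\tilde T-\bar u_0\,\tilde T^{q^j}=-\bar u_0\,\tilde T\bigl(\tilde T^{q^j-1}-\bar u_0^{-1}\bigr),\]
where $\bar u_0\in K^{\times}$ is the residue of $z_{s+1}\pi^{q^j-1}$ (which has valuation $0$ in $G_s$). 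This factors into $q^j$ distinct linear factors over $K$ (namely $\tilde T=0$ and the $q^j-1$ roots of $\tilde T^{q^j-1}=\bar u_0^{-1}$), each of which lifts uniquely to a root in $G_s$ by Hensel's lemma (Proposition \ref{prop:hensel}). The resulting $q^j$ distinct automorphisms establish Galois.

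Finally, the different exponent comes from transitivity applied to $d(G_{s+1}/\widehat{K(z_{s+1})})$ along the two paths of the diamond $\widehat{K(z_{s+1})}\subseteq G_s,\,\widehat{K(z_{s+2})}\subseteq G_{s+1}$. Through $\widehat{K(z_{s+2})}$ one uses that $G_{s+1}/\widehat{K(z_{s+2})}$ is again a tame Kummer extension of degree $q^j-1$ (the exponent of $z_2^{-1}$ in $\widehat{K(z_{s+2})}$ remains a power of $p$) to get $(q^j-2)+(q^j-1)\cdot q^j=q^{2j}-2$; through $G_s$ one gets $d(G_{s+1}/G_s)+q^j(q^j-2)$. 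Equating the two expressions yields $d(G_{s+1}/G_s)=2(q^j-1)$.
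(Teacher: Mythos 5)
Your proof is correct and follows essentially the same strategy as the paper's: the degree and different exponent come from the lower diamond of Figure \ref{fig:eleven} together with transitivity, and the Galois property is established by exhibiting $q^j$ distinct automorphisms $u_{s+1}\mapsto u_{s+1}+\Theta$ obtained from Hensel-lifted roots of the additive polynomial $\psi$, exactly as in the paper's use of $\chi(T)$. The only notable (and harmless) variation is in the Hensel setup: you use an arbitrary uniformizer $\pi$ and track the resulting unit $\bar u_0$ in the residue, whereas the paper first extracts a $(q^j-1)$-th root $w_0$ of $w^{-1}$ so that the reduction becomes $Z-Z^{q^j}$; both normalizations yield the required $q^j$ distinct roots.
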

\begin{proof}
It is clear that $[G_{s+1}:G_s]=q^j$. From transitivity of the different, the different exponent of $G_{s+1}/G_s$ is $d=2(q^j-1)$. It remains to prove that the extension $G_{s+1}/G_s$ is Galois.
For simplicity we write $v:=u_{s+1}$, $w:=z_{s+1}$ and $G:=G_s$. Then $G_{s+1}=G(v)$.

\vspace{.2cm} \noindent Denote by $\mathcal O_G$ and $\mathfrak m_G$ the valuation ring of $G$ and its maximal ideal. By Equation (\ref{eq:subtower E}), the element $v$ is a root of the polynomial
\[\Phi(T):=\left(\tr_k(T)+\alpha\right)^{q^j}-w^{-1}\cdot \tr_j(T) \in \mathcal O_G[T].\]
The reduction of $\Phi(T)$ modulo $\mathfrak m_G$ decomposes in $K[T]$ as follows:
\begin{equation}\label{eq:Phistar}
\Phi^*(T):=\left(\tr_k(T)+\alpha\right)^{q^j}= \prod_{\delta \in \Delta} \eta_{\delta}(T) \ \makebox{ with } \
\eta_{\delta}(T) = T^{q^j} - \delta^{q^j} \ \in K[T].
\end{equation}

\vspace{.2cm} \noindent
The polynomials
$\eta_{\delta}(T)$
are relatively prime, for distinct $\delta \in \Delta$. By Hensel's lemma we can lift the decomposition of $\Phi^*(T)$ to a decomposition of $\Phi(T)$ as follows:
\[\Phi(T)=\prod_{\delta \in \Delta}\Phi_\delta(T) , \]
with
monic polynomials\  $ \Phi_\delta(T) \in \mathcal O_G[T]$
of degree $\deg \Phi_\delta(T)=q^j$\,, and
\begin{equation}\label{eq:redphidelta}
\Phi_\delta^*(T)=T^{q^j}-\delta^{q^j} \makebox{\  for all \ } \delta \in \Delta.
\end{equation}
As $v$ is a root of $\Phi(T)$ and $[G(v):G]=q^j$, we conclude that there is a unique $\epsilon \in \Delta$ such that
\[\Phi_\epsilon(T) \makebox{ is the minimal polynomial of $v$ over $G$.}\]
We will now show that the polynomial $\Phi_\epsilon(T)$ has $q^j$ distinct roots in $G_{s+1}$ and hence that the field extension $G_{s+1}/G_s$ is Galois. To this end we consider
\[\chi(T):=\tr_j(T)-w\cdot \tr_k(T)^{q^j} \in G[T].\]
From Figure \ref{fig:eleven} we see that $w$ has a pole of order $q^j-1$ in $G$, and hence we can write
\[w=\left(\frac{1}{w_0}\right)^{q^j-1} \makebox{ with some prime element $w_0 \in G$.}\]
Then
\begin{equation*}
\chi(T)  =  \tr_j(T)-\left(\frac{1}{w_0}\right)^{q^j-1}\tr_k(T)^{q^j}
  =  w_0\left[\frac{T}{w_0}-\left(\frac{T}{w_0}\right)^{q^j}+w_0\cdot \Lambda\left(\frac{T}{w_0}\right)\right],
\end{equation*}
with $\Lambda(Z)$ a polynomial in the ring $\mathcal O_G[Z]$.
Again by Hensel's lemma, there exist $q^j$ distinct elements $\xi \in \mathcal O_G$ such that
\[\xi-\xi^{q^j}+w_0\cdot \Lambda(\xi)=0,\]
and for these elements we have
$\chi(w_0\xi)=0.$
Now it follows that
\begin{equation*}
\Phi(v+w_0\xi)  =  (\tr_k(v+w_0\xi)+\alpha)^{q^j}-w^{-1}\tr_j(v+w_0\xi)
  =  \Phi(v)-w^{-1}\chi(w_0\xi)=0-0=0.
\end{equation*}
For every $\delta \in \Delta\backslash \{\epsilon\}$ we have
$\Phi_\delta^*(v+w_0\xi)^*=\Phi_\delta^*(v^*)=\epsilon^{q^j}-\delta^{q^j},$
and therefore $v+w_0\xi$ cannot be a root of the polynomial $\Phi_\delta(T)$. Hence the element $v+w_0\xi$ is a root of $\Phi_\epsilon(T)$, which is the minimal polynomial of $v$ over $G$.
\end{proof}

\vspace{.2cm}
\noindent
In the following we need the concept of weakly ramified extensions of valuations. For simplicity, we  consider
only the case of complete fields.
\begin{definition}\label{def:weakly ramified}
Let $L$ be a field which is complete with respect to a discrete valuation and has an algebraically closed residue class field of characteristic $p>0$. A finite separable extension $L'/L$ is said to be weakly ramified if the following hold:
\begin{enumerate}
\item[{\rm (i)}] There exists a chain of intermediate fields \[L=L_0 \subseteq L_1 \subseteq L_2 \dots \subseteq L_m=L'\] such that all extensions $L_{i+1}/L_i $ are Galois $p$-extensions.
\item[{\rm (ii)}] The different exponent $d(L'|L)$ satisfies \[d(L'|L)=2(e(L'|L)-1),\] where $e(L'|L)$ denotes the ramification index of $L'/L$.
\end{enumerate}
\end{definition}

\begin{proposition}\label{prop:weakly ramified}
Let $L$ be a field, complete with respect to a discrete valuation, with an algebraically closed residue  field of characteristic $p>0$, and let $L'/L$ be a finite separable extension.
\begin{enumerate}
\item[{\rm (i)}] Let $H$ be an intermediate field, $L\subseteq H \subseteq L'$. Then $L'/L$ is weakly ramified if and only if both extensions $H/L$ and $L'/H$ are weakly ramified.
\item[{\rm (ii)}] Assume that $L'=H_1\cdot H_2$ is the composite field of two intermediate fields $L \subseteq H_1, H_2 \subseteq L'$. If both extensions $H_1/L$ and $H_2/L$ are weakly ramified, then also $L'/L$ is weakly ramified.
\end{enumerate}
\end{proposition}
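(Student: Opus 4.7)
The plan is to derive both parts from a single preliminary estimate together with transitivity of the different. The \emph{preliminary lemma} states that any finite separable extension $M'/M$ admitting a chain of Galois $p$-extensions satisfies $d(M'|M) \ge 2(e(M'|M)-1)$. I would prove it by induction on the length of the chain; for length one, a Galois $p$-extension in our setting (complete, algebraically closed residue field of characteristic $p$) is totally wildly ramified, so its ramification groups satisfy $G_0 = G_1$ of order $e$, giving $d = \sum_{i\ge 0}(|G_i|-1) \ge 2(e-1)$. The induction step follows directly from transitivity of the different.

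For part (i), the direction ``$\Leftarrow$'' is immediate: concatenate the chains for $H/L$ and $L'/H$ to obtain one for $L'/L$, and compute via transitivity
$$d(L'|L) = d(L'|H) + e(L'|H)\cdot d(H|L) = 2(e(L'|H)-1) + 2e(L'|H)(e(H|L)-1) = 2(e(L'|L)-1).$$
For the direction ``$\Rightarrow$'', since $[L':L]$ is a $p$-power both $[H:L]$ and $[L':H]$ are $p$-powers; I would then construct chains of Galois $p$-extensions for $H/L$ and $L'/H$ by working inside the Galois closure of $L'/L$ (whose Galois group is a $p$-group) and invoking the fact that every subgroup of a finite $p$-group sits in a subnormal chain with successive quotients of order $p$. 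The preliminary lemma then gives $d(H|L) \ge 2(e(H|L)-1)$ and $d(L'|H) \ge 2(e(L'|H)-1)$, and transitivity combined with the hypothesis $d(L'|L) = 2(e(L'|L)-1)$ forces equality in both, yielding weak ramification of $H/L$ and $L'/H$.

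For part (ii), let $L = M_0 \subseteq \cdots \subseteq M_r = H_1$ and $L = N_0 \subseteq \cdots \subseteq N_s = H_2$ be the chains witnessing weak ramification of $H_1/L$ and $H_2/L$. I would form the combined chain
$$L = N_0 \subseteq \cdots \subseteq N_s = H_2 = H_2M_0 \subseteq H_2M_1 \subseteq \cdots \subseteq H_2M_r = L',$$
where each step $H_2M_i \subseteq H_2M_{i+1}$ is Galois of $p$-power degree (the Galois group embedding into $\mathrm{Gal}(M_{i+1}/M_i)$), so the chain condition for $L'/L$ is in place. The main obstacle is to verify that each base-changed step $H_2M_{i+1}/H_2M_i$ is itself weakly ramified; I would attack this via upper-numbering ramification, using that $M_{i+1}/M_i$ is a weakly ramified $p$-extension whose upper break occurs only at $1$ and that this property passes to the Galois extension $H_2M_{i+1}/H_2M_i$ under the natural embedding of Galois groups. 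Once all steps are seen to be weakly ramified, repeated application of part (i) in the ``$\Leftarrow$'' direction yields the weak ramification of $L'/L$.
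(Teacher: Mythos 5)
The paper itself gives no argument here; it simply cites the reference \cite{recent} (Garcia--Stichtenoth, \emph{On the Galois closure of towers}), so there is no authorial proof to compare against. Judged on its own terms, your proposal gets the global strategy right (one inequality from the chain of Galois $p$-extensions plus transitivity of the different), the preliminary lemma and its ramification-group computation are correct, and the $\Leftarrow$ direction of (i) is fine. Two steps, however, are asserted rather than proved, and the second one is a genuine gap.

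In the $\Rightarrow$ direction of (i) you assert that the Galois closure $N/L$ of a weakly ramified extension has a $p$-group Galois group. This is true in our setting, but it does \emph{not} follow merely from $[L':L]$ being a $p$-power: a non-Galois extension of $p$-power degree can have a Galois closure with non-trivial tame part. What actually makes it work is an induction on the length of the chain $L=L_0\subseteq\cdots\subseteq L_m=L'$: since $L_1/L$ is Galois, every $L$-conjugate of $L'$ is an extension of $L_1$ with a chain of Galois $p$-extensions of length $m-1$, so by induction its Galois closure over $L_1$ is a $p$-extension; taking composita over $L_1$ and then using $[L_1:L]=p$-power gives that $\mathrm{Gal}(N/L)$ is a $p$-group. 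You should spell this out; once you have it, the subnormal-chain argument and the ``equality is forced at each step'' conclusion are fine.

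For part (ii) the proposal has a real gap at the central step. You claim that the condition ``upper ramification break only at $1$'' for $M_{i+1}/M_i$ ``passes to the Galois extension $H_2M_{i+1}/H_2M_i$ under the natural embedding of Galois groups.'' As stated this is not a valid inference: upper numbering is compatible with \emph{quotients} (Herbrand), while the natural map $\mathrm{Gal}(H_2M_{i+1}/H_2M_i)\hookrightarrow\mathrm{Gal}(M_{i+1}/M_i)$ is an inclusion of subgroups living over \emph{different} base fields, and $H_2M_{i+1}/H_2M_i$ is a base change, not a subextension, of $M_{i+1}/M_i$; the two ramification filtrations are computed with respect to different valuations and do not simply correspond under this embedding. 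A correct argument needs to pass to a common Galois closure $\widetilde N/M_i$ containing $H_2M_{i+1}$, write $G=\mathrm{Gal}(\widetilde N/M_i)$, $H=\mathrm{Gal}(\widetilde N/H_2M_i)$, $G'=\mathrm{Gal}(\widetilde N/M_{i+1})$, use that lower numbering restricts ($H_t=H\cap G_t$), convert via the Herbrand functions $\varphi,\psi$, and exploit the inequality $\psi_{\widetilde N/H_2M_i}(w)\ge w$ together with the fact (again requiring the $p$-group lemma above) that $G$ is a $p$-group, so $G_0=G_1$ and $\psi_{\widetilde N/M_i}(1)=1$. Only after these ingredients are in place does one obtain $H\cap G_{\psi(w)}\subseteq G'$ for $w>1$, i.e.\ $H_2M_{i+1}/H_2M_i$ has upper break $\le 1$. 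So the route you describe can be made to work, but the phrase ``passes under the natural embedding'' conceals precisely the Herbrand-function computation and the $p$-group hypothesis that carry the argument; without them the step is unjustified.
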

\begin{proof}
This follows using techniques from \cite{recent}.
See \cite{recent}.
\end{proof}

\vspace{.2cm}
\noindent
By Propositions \ref{prop:EtH}, \ref{prop:G s+1/G s} and item\,(i) of Proposition\,\ref{prop:weakly ramified},
the extensions $\widehat{E}_1/G_1$ and $G_s/G_1$ are weakly ramified. We conclude from item (ii) of Proposition \ref{prop:weakly ramified} that
\begin{equation}\label{eq:weakly ramified}
\widehat{E}_s/\widehat{E}_1 \makebox{ is weakly ramified, for all $s\ge 1$.}
\end{equation}

\noindent
Now we can calculate the different exponent of a place $\widetilde{P}$ of $F_{i+1}$ over $P_1:=[x_1=0]$ in the case $m=1$ (see Figures \ref{fig:eight} and \ref{fig:eleven}). As before, the place $P$ is the restriction of $\widetilde{P}$ to the field $E_i$, and we denote by $P_2$ the restriction of $\widetilde{P}$ to the field $F_2=E_1(x_1)$. The situation is represented in Figure \ref{fig:twelve}, where we set $e_0:=e(P_2|P_1)$ and $e_1:=e(P|[u_1=\infty])$. By Equation (\ref{eq:weakly ramified}),  $e_1$ is a power of $p$, and
$d(P|[u_1=\infty])=2(e_1-1).$

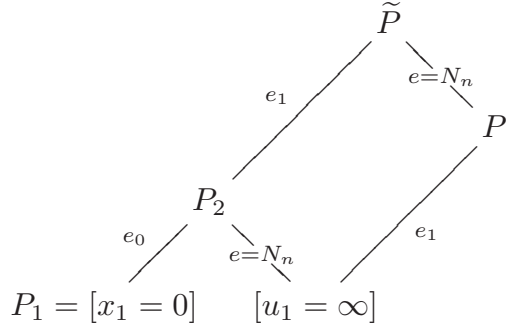
\begin{figure}[h]
\begin{center}
\scalebox{1.2}{
\makebox[\width][c]{
\xymatrix@dr{
\widetilde{P} \ar@{-}|{e=N_n}[r] \ar@{-}_{e_1}[dd]&P \ar@{-}^{e_1}[dd]\\
&\\
P_2 \ar@{-}|{e=N_n}[r] \ar@{-}_{e_0}[d] &[u_1=\infty]\\
P_1=[x_1=0]
}
}
}
\end{center}
\caption{The place extension $\widetilde{P}|P_1$ in the case $m=1$.\label{fig:twelve}}
\end{figure}

\noindent By Proposition\,\ref{prop:ramification} the place $P_2$ is tame over $[u_1=\infty]$ with ramification index \[e(P_2|[u_1=\infty])=\frac{q^n-1}{q-1}.\]
From transitivity we obtain
\begin{equation*}
d(\widetilde{P}|[u_1=\infty])  =  e_1\left(\frac{q^n-1}{q-1}-1\right)+d(\widetilde{P}|P_2)
  =  \frac{q^n-1}{q-1}\cdot 2(e_1-1)+\left(\frac{q^n-1}{q-1}-1\right);
\end{equation*}
hence
\begin{equation}\label{eq:different ptilde ptwo}
d(\widetilde{P}|P_2)=\left(\frac{q^n-1}{q-1}+1\right)(e_1-1)=(N_n+1)(e_1-1).
\end{equation}
By Figure \ref{fig:four},
\begin{equation}\label{eq:e0 and d(p2 p1)}
e_0=e(P_2|P_1)=q^{j-1}N_k \makebox{\ and \ } d(P_2|P_1)=(q^{j-1}-1)N_n+(e_0-1).
\end{equation}
Combining Equations (\ref{eq:different ptilde ptwo}) and (\ref{eq:e0 and d(p2 p1)}) one gets
\begin{eqnarray*}
d(\widetilde{P}|P_1) &  =  & e_1\left( (q^{j-1}-1)N_n+(e_0-1) \right)+ (N_n+1)(e_1-1)\\
\\
                     &  =  & N_n(e_1q^{j-1}-1)+(e_0e_1-1) \
                      =  \ \left(\frac{N_n}{N_k}+1\right)(e_0e_1-1)+\frac{N_n}{N_k}-N_n\\
\\
                     & \le & \left(\frac{q^n-1}{q^k-1}+1\right)(e(\tilde{P}|P_1)-1).
\end{eqnarray*}
This inequality shows the Main Claim in case $m=1$; i.e., the place extension $\tilde{P}|P_1$ satisfies
$$ d(\tilde{P}|P_1) \le  \left(\frac{q^n-1}{q^k-1}+1\right) (e(\tilde{P}|P_1)-1) = b_0\cdot( e(\tilde{P}|P_1)-1).$$
It remains to prove the Main Claim  for:

\vspace{.2cm} \noindent
\fbox{The case $m\ge 2$.}

\noindent
Now we have a place $\widetilde{P}$ of the field $F_{i+1}$ such that its restriction $P$ to the field $E_i=K(u_1,\dots,u_i)$ satisfies the condition in Equation (\ref{eq:restriction u}) for some integer $m$, with $2 \le m \le i$. The restrictions of $P$ to the rational subfields $K(u_1),\dots,K(u_m)$ and $K(z_2),\dots,K(z_m)$ are shown in Figure \ref{fig:thirteen}.

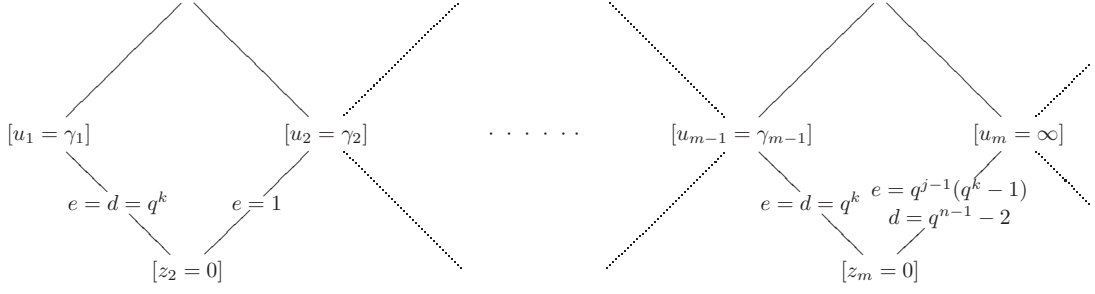
\begin{figure}[h]
\begin{center}
\resizebox{\textwidth}{!}{
\makebox[\width][c]{
\renewcommand{\labelstyle}{\textstyle}
\xymatrix@!=3.3pc{
&&&&&&&\\
[u_1=\gamma_1] \ar@{-}[ur] && [u_2=\gamma_2] \ar@{.}[ur] \ar@{-}[ul]  \ar@{.}[dr] &\ar@{}|{\cdot \ \cdot \ \cdot \ \cdot \ \cdot \ \cdot }[r]&&[u_{m-1}=\gamma_{m-1}] \ar@{-}[ur] \ar@{.}[ul]  \ar@{.}[dl] &&[u_m=\infty] \ar@{.}[];[]+/ur 1.6cm/ \ar@{-}[ul] \ar@{.}[];[]+/dr 1.6cm/\\
&[z_2=0] \ar@{-}|{e=d=q^k}[ul] \ar@{-}|{e=1}[ur] &&&&& [z_m=0] \ar@{-}|{e=d=q^k}[ul] \ar@{-}|{\txt{$e=q^{j-1}(q^k-1)$ \\ $d=q^{n-1}-2$}}[ur]\\
}
}
}
\end{center}
\caption{The case $m \ge 2$.\label{fig:thirteen}}
\end{figure}

\vspace{.2cm}
\noindent
There is a strong analogy between Figures \ref{fig:nine} and \ref{fig:thirteen} that interchanges the roles of $j$ and $k$. In fact, after passing to the completions one proves that there is a field $L_1$ with $\widehat{K(z_m)} \subseteq L_1 \subseteq \widehat{K(u_m)}$ such that:
\begin{enumerate}
\item[(i)] $L_1/\widehat{K(z_m)}$ is cyclic with ramification index $e=q^k-1$.
\item[(ii)] $\widehat{K(u_m)}/L_1$ is a weakly ramified $p$-extension.
\item[(iii)] The extension $L:=\widehat{E}_{m-1}\cdot L_1$ is weakly ramified over $L_1$.
\end{enumerate}
The proof is exactly as in the case of $m=1$; we leave the details to the reader. Note that $L_1$ corresponds to the field $G_1$ in Figure \ref{fig:eleven}. From the case of $m= 1$, we  know that the extension $\widehat{G}/\widehat{K(u_m)}$ with $G=K(u_m,\dots,u_i)$ is weakly ramified (see Equation (\ref{eq:weakly ramified})), and then it follows from Proposition \ref{prop:weakly ramified} that also $\widehat{E}_i=\widehat{E}_m\cdot \widehat{G}$ is weakly ramified over $L$. From item (i) above we see that the extension $L/\widehat{E}_{m-1}$ has ramification index $e=q^k-1$. The extensions $\widehat{E}_{m-1}/\widehat{E}_{1}$ and $\widehat{F}_{2}/\widehat{F}_{1}$ are unramified. Figure \ref{fig:fourteen} represents the situation (in  Figure \ref{fig:fourteen},
\textquoteleft w.r.\textquoteright \  means `weakly ramified'). The degree of $M:=L\cdot\widehat{F}_2$ over $\widehat{F}_2$ follows from Abhyankar's lemma.

\begin{figure}[h]
\begin{center}
\scalebox{1.0}{
\makebox[\width][c]{
\renewcommand{\labelstyle}{\textstyle}
\xymatrix@!=2.5pc@dr{
&&&&&&\\
\widehat{F}_{i+1} \ar@{-}[r]|{e=N_n} & \widehat{E}_i  \ar@{-}[rr]|{w.r.}&& \widehat{G}  \\
 & \widehat{E}_m \ar@{-}[u]|{w.r.} \ar@{-}[rr]|{w.r.}& & \widehat{K(u_m)} \ar@{-}[u]|{w.r.}\\
M=L\cdot\widehat{F}_2 \ar@{-}[uu]|{\widetilde{e} \ , \ \, \widetilde{d}} \ar@{-}|{e=N_n}[r]& L \ar@{-}[u]|{w.r.} \ar@{-}[rr]|{w.r.}& & L_1 \ar@{-}[u]|{w.r.}\\
 & \widehat{E}_{m-1} \ar@{-}[u]|{e=q^k-1}\\
\widehat{F}_2 \ar@{-}[uu]|{e=N_k} \ar@{-}[r]|{e=q^n-1} & \widehat{E}_1\ar@{-}[u]|{e=1}\\
\widehat{F}_1 \ar@{-}[u]|{e=1}
}
}
}
\end{center}
\caption{Ramification over $\widehat{F}_1$ in case $m\ge 2$.\label{fig:fourteen}}
\end{figure}
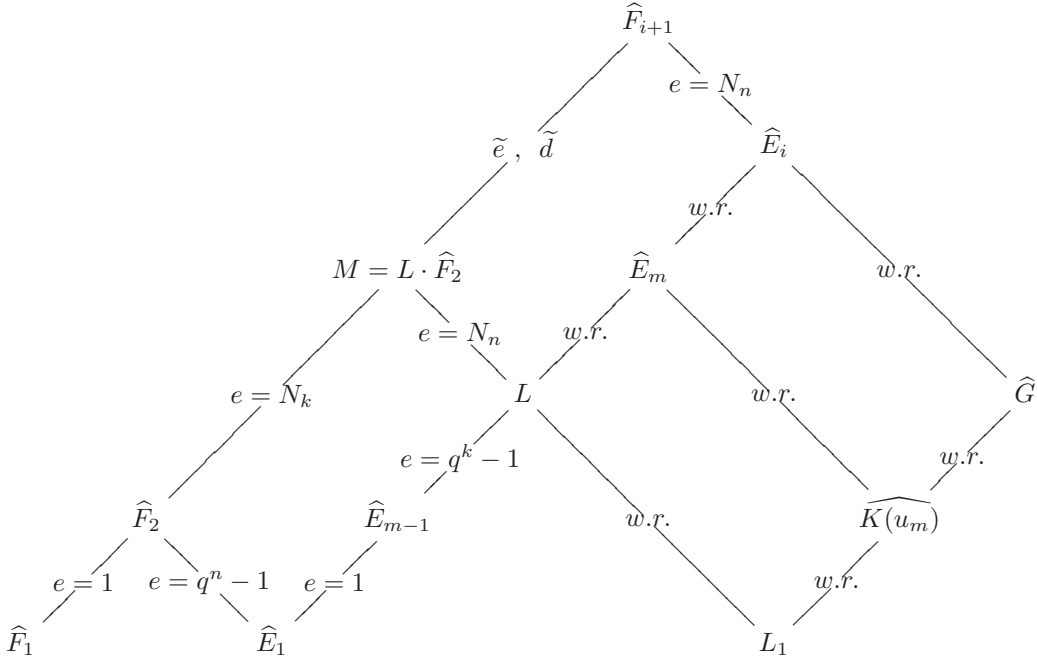

\vspace{.2cm}
\noindent
Finally we consider the composite field $\widehat{F}_{i+1}=\widehat{E}_i\cdot \widehat{F}_2=\widehat{E}_i(x_1)$ and determine ramification index and different exponent of $\widetilde{P}$ over $P_1=[x_1=0]$. We have
\[e(\widetilde{P}|P_1)=\dfrac{q^k-1}{q-1}\cdot \, \widetilde{e}\ , \ \makebox{ with $\tilde{e}$ a power of $p$}.\]

\noindent Denoting by $\widetilde{d}$ the different of $\widehat{F}_{i+1}$ over $M$,
\[\widetilde{d}+\widetilde{e}\cdot(N_n-1)=(N_n-1)+N_n\cdot(2\widetilde{e}-2), \, \makebox{ and
hence } \, \widetilde{d}=(N_n+1)(\widetilde{e}-1).\]
 We finally obtain that
\begin{equation*}
d(\widetilde{P}|P_1)   = \widetilde{d}+\widetilde{e}\cdot(N_k-1)=  \left( \dfrac{q^n-1}{q^k-1}+1
\right)(e(\widetilde{P}|P_1)-1)-\left(\dfrac{q^n-1}{q-1}-\dfrac{q^n-1}{q^k-1}\right)
  \le  b_0\cdot(e(\widetilde{P}|P_1)-1).
\end{equation*}
This finishes the proof of
the Main Claim  and hence also the proof of Theorem \ref{thm3}. \rule{0.5em}{0.5em}

\begin{remark}\label{separated variables}
{\rm  The $u$-tower $\mathcal{E} = (E_1 \subseteq E_2 \subseteq \ldots)$ is recursively defined by
(see Equation\,(\ref{eq:subtower E}))
\begin{equation}\label{eq:sepvaru}
\frac{\tr_j(Y)}{(\tr_k(Y)+\alpha)^{q^j}}=
\frac{\tr_j(X)^{q^k}}{\tr_k(X)+\alpha} \, .
\end{equation}
This equation has \textquoteleft separated variables\textquoteright .

\vspace{.2cm}
\noindent
The $x$-tower $\mathcal{F} = (F_1 \subseteq F_2 \subseteq \ldots )$ is recursively defined by Equation\,(\ref{eq:defining eq F})
which does not have separated variables. Subtracting Equation\,(\ref{eq:defining eq F}) from its $q$-th power,
one sees that the tower $\mathcal{F}$ also satisfies the recursive equation
\begin{equation}\label{eq:sepvarx}
\frac{Y^{q^n}-Y}{Y^{q^j}} = \frac{X^{q^n}-X}{X^{q^n-q^k+1}}\, .
\end{equation}
Equation\,(\ref{eq:sepvarx}) has separated variables but it is not irreducible. One can get from this equation a very simple
proof of Corollary \ref{cor:rational places tower}.

\vspace{.2cm}
\noindent
The $z$-tower $\mathcal{H} = (H_1 \subseteq H_2 \subseteq \cdots)$ with $z_i$ as in Equation\,(\ref{eq:subtower E})
and $H_i:=K(z_1, \ldots, z_i)$ satisfies the recursion
\begin{equation}\label{eq:sepvarz}
\frac{(Y+1)^{N_n}}{Y^{N_j}} = \frac{(X+1)^{N_n}}{X^{q^kN_j}}\, ,
\end{equation}
which has separated variables and is reducible. Equation\,(\ref{eq:sepvarz}) can be deduced  from Equation\,(\ref{eq:sepvarx}).

\vspace{.2cm}
\noindent
Since $\mathcal{H}$ is a subtower of $\mathcal{E}$ and $\mathcal{E}$ is a subtower of $\mathcal{F}$,
we have (see \cite{jnt})
\begin{equation}\label{eq:hef}
\lambda (\mathcal{H}) \ge \lambda(\mathcal{E}) \ge \lambda(\mathcal{F})\ge   2\,\left(\frac{1}{q^j-1}+\frac{1}{q^k-1}\right)^{-1} \, .
\end{equation}
It would be interesting to have a direct proof for the limit $\lambda(\mathcal{H})$ just using
Equation\,(\ref{eq:sepvarz}).
}
\end{remark}

\section{A Drinfeld modular motivation for the equations}\label{sec:four}

In this section we show that Equations (\ref{eq:sepvarx}) and (\ref{eq:sepvarz}) can be obtained using a Drinfeld modular construction. 
More precisely, we show that curves in the towers are related to one-dimensional varieties parametrizing certain classes of Drinfeld modules of characteristic $T-1$ and rank $n\ge 2$ together with some varying additional structure. For definitions and results about Drinfeld modules, we refer to \cite{goss}. We will restrict to the case of Drinfeld $\mathbb F_q[T]$-modules of rank $n$ and characteristic $T-1$.

\vspace{.2cm}
\noindent
It is well known, that Drinfeld modular curves which parametrize Drinfeld modules of rank two together with some level structure, have many $\mathbb{F}_{q^2}$-rational points after suitable reductions. These rational points correspond to supersingular Drinfeld modules. In fact, it was shown in \cite{gekeler2}, that curves obtained in this way are asymptotically optimal; i.e., they attain the Drinfeld--Vl\u{a}du\c{t} bound. More generally, after reduction, the variety parametrizing Drinfeld modules of rank $n$ (again with some additional structure) has points corresponding to supersingular Drinfeld modules, which were shown to be $\mathbb{F}_{q^n}$-rational in \cite{gekeler}. This variety however, has dimension $n-1$. Hence we will consider one-dimensional subvarieties containing the supersingular points, to obtain curves with many $\mathbb{F}_{q^n}$-rational points. We will consider a one-dimensional sub-locus corresponding to particular Drinfeld modules (including all supersingular ones, in order to get many rational points), together with particular isogenies leaving this sub-locus invariant (in order to get recursive equations).

\vspace{.2cm}
\noindent
More precisely, let $A=\mathbb F_q[T]$ be the polynomial ring over $\mathbb F_q$. Let $L$ be a field containing $\mathbb F_q$ together with a fixed $\mathbb{F}_q$-algebra homomorphism $\iota: A \to L$. The kernel of $\iota$ is called the characteristic of $L$. We will always assume that the characteristic is the ideal generated by $T-1$. Further denote by $\tau$ the $q$-Frobenius map and let $L\{\tau\}$ be the ring of additive polynomials over $L$ under operations of addition and composition (also called twisted polynomial ring or Ore ring). Given $f(\tau)=a_0+a_1\tau+\cdots+a_n \tau^n \in L\{\tau\}$, we define $D(f):=a_0$. Note that the map $D: L\{\tau\} \to L$ is a homomorphism of $\mathbb{F}_q$-algebras.

\vspace{.2cm}
\noindent
A homomorphism of $\mathbb{F}_q$-algebras $\phi:A \to L\{\tau\}$ (where one usually writes $\phi_a$ for the image of $a\in A$ under $\phi$) is called a Drinfeld $A$-module $\phi$ of characteristic $T-1$ over $L$, if $D\circ \phi=\iota$ and if there exists $a \in A$ such that $\phi_a \neq \iota(a)$. It is determined by the additive polynomial $\phi_T$. If $\phi_T=g_0\tau^n+g_1\tau^{n-1}+\cdots+g_{n-1}\tau+1 \in L\{\tau\}$, with $g_i\in L$ and $g_0\neq 0$, the Drinfeld module is said to have rank $n$. A Drinfeld module $\phi$ given by $\phi_T=g_0\tau^n+g_1\tau^{n-1}+\cdots+g_{n-1}\tau+1$ is called supersingular (in characteristic $T-1$) if $g_1=\cdots=g_{n-1}=0$. Note that this corresponds to the situation that the additive polynomial $\phi_{T-1}$ is purely inseparable.

\vspace{.2cm}
\noindent
For Drinfeld modules $\phi$ and $\psi$ as above, an isogeny $\lambda:\phi\to \psi$ over $L$ is an element $\lambda\in L\{\tau\}$ satisfying
\begin{equation}
\label{isogeny}
\lambda \cdot \phi_a=\psi_a \cdot \lambda \text{ for all } a\in A.
\end{equation}
We say that the kernel of the isogeny is annihilated by multiplication with $P(T)\in \mathbb{F}_q[T]$, if there exists $\mu\in L\{\tau\}$ such that
$$\mu\cdot \lambda=\phi_{P(T)}.$$ Over the algebraically closed field $\bar{L}$, Drinfeld modules $\phi$ and $\psi$ are isomorphic, if they are related by an invertible isogeny; i.e., if there exists $\lambda \in \bar{L}^\times$, such that Equation (\ref{isogeny}) holds.

\vspace{.2cm}
\noindent
In analogy to normalized Drinfeld modules in \cite{elkiesd}, for $1 \le j \le n$, let $\mathfrak D_{n,j}$ be the set of rank $n$ Drinfeld $A$-modules of characteristic $T-1$ of the form $\phi_T=-\tau^n+g\tau^{j} + 1$. We will call such Drinfeld modules normalized. As before, we assume that $\gcd(n,j)=1$ and write $k=n-j$. Note that $\mathfrak D_{n,j}$ contains the supersingular Drinfeld module $\phi$ with $\phi_T=-\tau^n+1$. First we exhibit certain isogenies for $\phi \in \mathfrak D_{n,j}$ and show that the isogenous Drinfeld module is again in $\mathfrak D_{n,j}$:

\begin{proposition}
Let $\phi \in \mathfrak D_{n,j}$ be a Drinfeld module defined by $\phi_T=-\tau^n+g\tau^{j} + 1$ and let $\lambda$ be an additive polynomial of the form $\lambda=\tau^k-a$. Then there exists a Drinfeld module $\psi$ such that $\lambda$ defines an isogeny from $\phi$ to $\psi$ if and only if
\begin{equation}\label{eq:ag}
\dfrac{1}{a}g^{q^k}-\dfrac{1}{a^{q^j}}g-a^{q^n-1}+1=0.
\end{equation}
Moreover $\psi \in \mathfrak D_{n,j}$ and more precisely, $\psi_T=-\tau^n+h\tau^{j} + 1$ with
\begin{equation}\label{eq:hag}
h=-a^{q^n}+a+g^{q^k}.
\end{equation}
\end{proposition}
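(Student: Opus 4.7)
The isogeny condition $\lambda\phi_a=\psi_a\lambda$ for all $a\in A=\mathbb F_q[T]$ is equivalent, since $\phi$ and $\psi$ are $\mathbb F_q$-algebra homomorphisms and $A$ is generated by $T$ as an $\mathbb F_q$-algebra, to the single identity $\lambda\phi_T=\psi_T\lambda$ in the Ore ring $L\{\tau\}$. The plan is to expand both sides using the commutation law $\tau^i c=c^{q^i}\tau^i$ and then to match coefficients.

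For the implication \textit{(\ref{eq:ag}) $\Rightarrow$ $\psi$ exists}, I would set $h:=-a^{q^n}+a+g^{q^k}$ and $\psi_T:=-\tau^n+h\tau^j+1$ and compute directly
\begin{align*}
\lambda\cdot\phi_T &= -\tau^{n+k}+(g^{q^k}+a)\tau^n-ag\tau^j+\tau^k-a,\\
\psi_T\cdot\lambda &= -\tau^{n+k}+(a^{q^n}+h)\tau^n-ha^{q^j}\tau^j+\tau^k-a.
\end{align*}
The $\tau^{n+k}$, $\tau^k$ and constant terms match automatically; equality at $\tau^n$ is just the definition (\ref{eq:hag}) of $h$; and equality at $\tau^j$ reads $ha^{q^j}=ag$, which after substituting the expression for $h$ and dividing by $a\cdot a^{q^j}$ becomes (\ref{eq:ag}). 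Since $\psi_T$ has leading coefficient $-1\neq 0$ and constant term $1=\iota(T)$, this $\psi$ is a rank-$n$ Drinfeld $A$-module of characteristic $T-1$ lying in $\mathfrak D_{n,j}$, and by construction $\lambda$ is an isogeny $\phi\to\psi$.

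For the reverse implication I must check that the mere existence of some Drinfeld module $\psi$ with $\lambda\phi_T=\psi_T\lambda$ already forces (\ref{eq:ag}) and the prescribed shape of $\psi_T$. Here I would use that $L\{\tau\}$ is right-Euclidean: if $\lambda$ right-divides $\lambda\phi_T$ then the quotient $\psi_T$ is unique. Running the right Euclidean division of $\lambda\phi_T$ by $\tau^k-a$, the first two steps extract $-\tau^n$ and $h\tau^j$ into the quotient (with $h$ as in (\ref{eq:hag})) and leave the partial remainder $(ha^{q^j}-ag)\tau^j+\tau^k-a$. Continuing the division, handled uniformly in the two subcases $j<k$ and $j>k$ allowed by $\gcd(j,k)=1$, produces a final remainder and any additional quotient terms, all of which are scalar multiples of $ha^{q^j}-ag$. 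Hence a $\psi$ exists exactly when $ha^{q^j}=ag$, i.e.\ when (\ref{eq:ag}) holds, and in that case the extraneous quotient terms vanish, so that $\psi_T=-\tau^n+h\tau^j+1\in\mathfrak D_{n,j}$ automatically.

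The only delicate step is the noncommutative bookkeeping in the Euclidean division; however, since the target quotient $-\tau^n+h\tau^j+1$ is already known, the argument only needs to track how the single scalar $ha^{q^j}-ag$ propagates through the algorithm, rather than doing a full coefficient-by-coefficient analysis.
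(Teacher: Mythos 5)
Your forward direction is a correct direct verification, essentially identical in content to the paper's coefficient matching. For the converse you appeal to the right-Euclidean structure of $L\{\tau\}$: writing $\lambda\phi_T = Q\lambda + R$ with $\deg R<k$, a suitable $\psi$ exists iff $R=0$, in which case $\psi_T=Q$ is forced by uniqueness; you observe that the division first produces $-\tau^n$ and then $h\tau^j$ in $Q$, and thereafter every additional quotient term and the final remainder carries the factor $c:=ha^{q^j}-ag$ (times Frobenius-twisted powers of $a$), so $R=0\Leftrightarrow c=0\Leftrightarrow$(\ref{eq:ag}), and then $Q=-\tau^n+h\tau^j+1\in\mathfrak D_{n,j}$. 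This is sound, though two small points are worth noting: your dichotomy $j<k$ versus $j>k$ technically omits $j=k=1$ (which is allowed and gives $n=2$, handled the same way), and the surviving terms are $c$ times Frobenius-twisted powers of $a$ rather than literal scalar multiples of $c$ --- the conclusion $R=0\Leftrightarrow c=0$ still holds because $a\neq0$, but the ``propagation'' step is sketched rather than carried out. The paper instead posits $\psi_T=\sum_{s=0}^n h_s\tau^{n-s}$ with unknown coefficients, expands $\lambda\phi_T=\psi_T\lambda$, and solves the degree-by-degree coefficient equations, using the residues of exponents modulo $k$ to show that $h_s=0$ for all $s\neq0,k,n$; the two remaining equations at degrees $n$ and $j$ then produce (\ref{eq:hag}) and (\ref{eq:ag}). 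Both arguments carry out essentially the same elimination; your Euclidean-division framing makes the existence and uniqueness of $\psi_T$ conceptually transparent, while the paper's version is fully explicit and delivers both conclusions in a single pass rather than splitting into sufficiency and necessity.
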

\begin{proof}
The existence of a Drinfeld module $\psi$ such that $\lambda$ defines an isogeny from $\phi$ to $\psi$, is equivalent to the existence of an additive polynomial $\psi_T=h_0\tau^n+h_1\tau^{n-1}+\cdots+h_{n-1}\tau + h_n$ such that $\lambda \cdot \phi_T=\psi_T \cdot \lambda$. Clearly one needs to choose $h_0=-1$ and $h_n=1$.

The equation $\lambda \cdot \phi=\psi \cdot \lambda$ implies that
$$-\tau^{n+k}+(a+g^{q^k})\tau^n-ag \tau^{j}+\tau^k-a=\sum_{i=k}^{n+k}h_{n-i+k}\tau^i-\sum_{i=0}^nh_{n-i}a^{q^{i}}\tau^i.$$ Consequently we have
\begin{equation}\label{eq:h}
(-a^{q^n}+a+g^{q^k})\tau^n-ag \tau^{j}=\sum_{i=k+1}^{n+k-1}h_{n-i+k}\tau^i-\sum_{i=1}^{n-1}h_{n-i}a^{q^{i}}\tau^i.
\end{equation}
By comparing coefficients of $\tau^{i}$ in Equation\,(\ref{eq:h}) for $n+k-1 \le i \le n+1$, we see that $h_{i}=0$ for all $1\le i < k$. By considering coefficients of $\tau^{i}$ in Equation\,(\ref{eq:h}) for $i \not\equiv n \pmod{k}$, we then conclude that $h_i=0$ for all $i \not\equiv 0 \pmod{k}$. We are left to determine the coefficients of the form $h_{i}$, with $1\le i < n$ a multiple of $k$. Again by Equation\,(\ref{eq:h}) we conclude that for such $i$, $h_i=0$ if $k<i<n$. This leaves two equations in the coefficient $h_k$, namely the ones in Equation\,(\ref{eq:h}) relating the coefficients of  $\tau^n$ and $\tau^j$: $$-a^{q^n}+a+g^{q^k}=h_k \ \makebox{ and } \ -ag=-h_{k}a^{q^j}.$$ The proposition now follows.
\end{proof}

\vspace{.2cm}
\noindent
Now we determine all solutions of Equation\,(\ref{eq:ag}):
\begin{proposition}\label{prop:hgx}
Let $X\in \bar{L}$ be such that $X^{q^k-1}=a$. All solutions of Equation\,{\rm(\ref{eq:ag})} are given by
\begin{equation}
\label{eq:allsol}
g=\dfrac{X^{q^n-1}+c}{X^{q^j-1}}, \ \makebox{ with } \ c\in\mathbb{F}_{q^k}.
\end{equation}
The corresponding $h$ in Equation\,{\rm(\ref{eq:hag})} is given by
\begin{equation}
\label{eq:allsol2}
h=\dfrac{X^{q^n-1}+c}{X^{q^n-q^k}}.
\end{equation}
\end{proposition}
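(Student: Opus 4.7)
The plan is to verify Equation~(\ref{eq:allsol}) by direct substitution and then invoke a degree/counting argument to conclude completeness. First I would plug $g = (X^{q^n-1}+c)/X^{q^j-1}$ into Equation~(\ref{eq:ag}). Using $a = X^{q^k-1}$ one has $a^{q^j} = X^{q^n-q^j}$, so both fractions $g^{q^k}/a$ and $g/a^{q^j}$ can be put over the common denominator $X^{q^n-1}$. Exploiting $c^{q^k}=c$ (since $c\in\mathbb{F}_{q^k}$) and the identity $(q^k-1)(q^n-1) = q^{n+k}-q^n-q^k+1$, I expect the computation to collapse to
$$\frac{g^{q^k}}{a}-\frac{g}{a^{q^j}} \;=\; \frac{X^{q^{n+k}-q^k}+c-(X^{q^n-1}+c)}{X^{q^n-1}} \;=\; X^{(q^k-1)(q^n-1)}-1 \;=\; a^{q^n-1}-1,$$
which is exactly Equation~(\ref{eq:ag}).

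For completeness, I would note that Equation~(\ref{eq:ag}), regarded as a polynomial equation in $g$ over $\bar L$, has degree $q^k$ and hence at most $q^k$ roots. Since distinct elements $c\in\mathbb{F}_{q^k}$ clearly produce distinct values of $g$ in Equation~(\ref{eq:allsol}), the $q^k$ solutions in the proposition already exhaust all solutions. An equivalent structural phrasing is that the map $g\mapsto g^{q^k}/a - g/a^{q^j}$ is $\mathbb{F}_q$-additive with kernel $\{g : g^{q^k-1}=a^{1-q^j}\}\cup\{0\}$ of size $q^k$, so the affine solution set is a single coset of this kernel, namely the translate by the particular solution $X^{q^n-1}/X^{q^j-1}$.

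Finally, Equation~(\ref{eq:allsol2}) should follow by plugging into $h = -a^{q^n}+a+g^{q^k}$. Here $a^{q^n}=X^{q^{n+k}-q^n}$ and $g^{q^k}=(X^{q^{n+k}-q^k}+c)/X^{q^n-q^k}$, so clearing the denominator $X^{q^n-q^k}$ should give
$$hX^{q^n-q^k} \;=\; -X^{q^{n+k}-q^k}+X^{q^n-1}+X^{q^{n+k}-q^k}+c \;=\; X^{q^n-1}+c,$$
in which the leading terms cancel. The only real obstacle in the whole argument is bookkeeping of the $q$-power exponents; no conceptual difficulty arises once the substitution $X^{q^k-1}=a$ is in place to trivialize the $(q^k-1)$-th roots implicit in Equation~(\ref{eq:ag}).
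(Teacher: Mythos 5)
Your proof is correct, but it takes a genuinely different route from the paper's. You verify the candidate formula (\ref{eq:allsol}) by direct substitution---which checks out, using $c^{q^k}=c$ and the exponent identity $(q^k-1)(q^n-1)=q^{n+k}-q^n-q^k+1$---and then argue completeness by degree counting: Equation\,(\ref{eq:ag}) is a polynomial of degree $q^k$ in $g$ and you exhibit $q^k$ distinct roots. The paper instead multiplies Equation\,(\ref{eq:ag}) through by $X^{q^n-1}$ and observes that the resulting left-hand side is exactly $\bigl(-X^{q^n-1}+gX^{q^j-1}\bigr)^{q^k}-\bigl(-X^{q^n-1}+gX^{q^j-1}\bigr)$, so the equation is equivalent to $-X^{q^n-1}+gX^{q^j-1}\in\mathbb{F}_{q^k}$; setting this quantity equal to $c$ and solving for $g$ yields (\ref{eq:allsol}) at once, with completeness coming for free rather than from a separate counting step. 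Your ``structural phrasing'' about the $\mathbb{F}_q$-additive map $g\mapsto g^{q^k}/a-g/a^{q^j}$ and its $q^k$-element kernel is in spirit the same observation the paper exploits, except that you keep the map in the variable $g$ while the paper performs the change of variable $Y=-X^{q^n-1}+gX^{q^j-1}$ that turns the map into $Y\mapsto Y^{q^k}-Y$, making the solution set $\mathbb{F}_{q^k}$ transparent. The computation of $h$ from Equation\,(\ref{eq:hag}) is the same routine substitution in both arguments.
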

\begin{proof}
Multiplying both sides of Equation\,(\ref{eq:ag}) with $X^{q^n-1}$ and using that $a=X^{q^k-1}$, we find that
$$-X^{(q^n-1)q^k}+X^{q^n-1}+X^{(q^j-1)q^k}g^{q^k}-X^{q^j-1}g=0,$$ which can be rewritten as
$$\left(-X^{q^n-1}+gX^{q^j-1}\right)^{q^k}-\left(-X^{q^n-1}+gX^{q^j-1}\right)=0.$$ The possible solutions for $g$ now follow. Inserting these solutions in Equation\,(\ref{eq:hag}), the formula for $h$ is obtained readily.
\end{proof}

\vspace{.2cm}
\noindent
Note that in fact $X$ corresponds to a choice of a nonzero element in the kernel of the isogeny $\lambda=\tau^k-a$. The kernel of $\lambda$ is just $\mathbb{F}_{q^k}X$. Exactly in the case that $c=-1$, the element $X$ can be chosen to be a $T$-torsion point of the Drinfeld module $\phi$. We will assume from now on that this is the case. The equations relating $g$, $h$ and $X$ then simplify to
\begin{equation}\label{eq:simplified}
g=\dfrac{X^{q^n}-X}{X^{q^j}} \, \makebox{ and } \, h=\dfrac{X^{q^n}-X}{X^{q^n-q^k+1}}.
\end{equation}
Therefore we have obtained exactly the same correspondence as the one described in Equation\,(\ref{eq:sepvarx}).
Through this correspondence, we are parametrizing normalized rank $n$ Drinfeld modules together with an isogeny of the form $\lambda=\tau^k-a$ and a nonzero $T$-torsion point in its kernel.

In the particular case of $n=2$ and $k=1$, these are just normalized Drinfeld modules together with $T$-isogenies (together with a $T$-torsion point in their kernel) as studied by Elkies in \cite{elkiesd}. For general $n$ and $k$, not all of the kernel of $\lambda$ will be annihilated by multiplication with $T$ anymore, but by multiplication with the polynomial $(T-1)^{N_k}-(-1)^k$ (which is obviously relatively prime to the characteristic $T-1$):
\begin{proposition}\label{prop:pk}
Let $\phi$ and $\psi$ be two Drinfeld modules given by $\phi_T=-\tau^n+g\tau^{j} + 1$ and $\psi_T=-\tau^n+h\tau^{j} + 1$. Further let $\lambda=\tau^k-X^{q^k-1}$ be an isogeny from $\phi$ to $\psi$. Then the kernel of $\lambda$ is annihilated by the polynomial $P_k(T)=(T-1)^{N_k}-(-1)^k$.
\end{proposition}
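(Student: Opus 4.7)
My plan is to prove the proposition by showing that the additive polynomial $\phi_{P_k(T)}$ vanishes on every element of $\ker\lambda\subseteq\bar L$; right-division in the Ore ring $L\{\tau\}$ will then yield the required $\mu\in L\{\tau\}$ with $\phi_{P_k(T)}=\mu\cdot\lambda$.

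First I identify $\ker\lambda$ explicitly. From $\lambda(y)=y^{q^k}-X^{q^k-1}y$, the nonzero roots satisfy $(y/X)^{q^k-1}=1$, so
\[
\ker\lambda=\mathbb{F}_{q^k}\cdot X,
\]
a one-dimensional $\mathbb{F}_{q^k}$-vector space. Since $\ker\lambda$ is stable under $\phi_T$ (from $\lambda\phi_T=\psi_T\lambda$), $\phi_{T-1}$ restricts to an $\mathbb{F}_q$-linear operator on it.

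Next I compute this restriction. Under the standing hypothesis $c=-1$, the element $X$ is a $T$-torsion point of $\phi$, so $-X^{q^n}+gX^{q^j}=-X$. For $\zeta\in\mathbb{F}_{q^k}$ one has $\zeta^{q^n}=\zeta^{q^j}$ because $q^n=q^{j+k}\equiv q^j\pmod{q^k-1}$. A direct calculation then yields
\[
\phi_{T-1}(\zeta X)=-\zeta^{q^n}X^{q^n}+g\zeta^{q^j}X^{q^j}=\zeta^{q^j}\bigl(-X^{q^n}+gX^{q^j}\bigr)=-\zeta^{q^j}X.
\]
Identifying $\ker\lambda$ with $\mathbb{F}_{q^k}$ by $\zeta X\leftrightarrow\zeta$, this says $\phi_{T-1}|_{\ker\lambda}$ is the operator $-\tau^j\colon\zeta\mapsto-\zeta^{q^j}$.

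Iterating, an easy induction shows $(\phi_{T-1})^m(\zeta X)=(-1)^m\zeta^{q^{jm}}X$ for all $m\ge1$. Specialising to $m=N_k$ and exploiting the arithmetic of $N_k=(q^k-1)/(q-1)$ together with the coprimality $\gcd(j,k)=1$ and the relation $\tau^k=\mathrm{id}$ on $\mathbb{F}_{q^k}$, I will verify the key identity
\[
(-\tau^j)^{N_k}=(-1)^k\cdot\mathrm{id}\quad\text{on }\mathbb{F}_{q^k}.
\]
This gives $\phi_{(T-1)^{N_k}}(\zeta X)=(-1)^k\zeta X$, whence $\phi_{P_k(T)}(\zeta X)=0$ for every $\zeta\in\mathbb{F}_{q^k}$, and right-division in $L\{\tau\}$ produces the required $\mu$.

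The main obstacle lies in the last identity: one must establish both $jN_k\equiv0\pmod k$ (so $\tau^{jN_k}$ acts trivially on $\mathbb{F}_{q^k}$) and $(-1)^{N_k}=(-1)^k$ up to the appropriate sign convention. This delicate number-theoretic step is where the specific choice of exponent $N_k$, rather than an arbitrary multiple of $k$, has to be justified from the structure of the partition $n=j+k$ and from the definition of $N_k$; it is the arithmetic heart of the argument and the only place where a careful, non-formal computation is required.
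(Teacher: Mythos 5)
Your strategy is genuinely different from the paper's. You treat $\ker\lambda\cong\mathbb{F}_{q^k}$ as an $\mathbb{F}_q[T]$-module directly and compute that $\phi_{T-1}$ acts as $-\tau^j$, then try to iterate. The paper instead factors $\lambda=\tau^k-X^{q^k-1}$ into its $N_k$ distinct degree-one right factors $\tau-(\alpha X)^{q-1}$, $\alpha\in\mathbb{F}_{q^k}^{\times}/\mathbb{F}_q^{\times}$, shows that the kernel of each such factor is annihilated by the linear polynomial $T-1+\alpha^{q^j-1}$, and then verifies that the product of these $N_k$ linear polynomials equals $P_k(T)$. These are complementary points of view, but they are not equivalent, and the step you single out as ``the arithmetic heart'' is where your approach breaks.

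That identity, $(-\tau^j)^{N_k}=(-1)^k\cdot\mathrm{id}$ on $\mathbb{F}_{q^k}$, is not merely delicate: it is false in general. It forces $\tau^{jN_k}=\mathrm{id}$, hence $k\mid jN_k$, hence $k\mid N_k$ since $\gcd(j,k)=1$; but $N_k=1+q+\cdots+q^{k-1}$ is typically not divisible by $k$ (already $q=2$, $k=2$ gives $N_2=3$). What your iteration \emph{does} prove, on taking $m=k$ where $\tau^{jk}$ really is the identity on $\mathbb{F}_{q^k}$, is that the degree-$k$ polynomial $(T-1)^k-(-1)^k$ annihilates $\ker\lambda$; and since $\ker\lambda$ is a cyclic $\mathbb{F}_q[T]$-module of $\mathbb{F}_q$-dimension $k$ (a normal element for $\tau^j$ is a cyclic vector, because $\gcd(j,k)=1$), this degree-$k$ polynomial is the exact annihilator. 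Thus, along your route, the assertion that $P_k(T)$ annihilates $\ker\lambda$ becomes exactly the divisibility $(T-1)^k-(-1)^k\mid(T-1)^{N_k}-(-1)^k$, which again amounts to $k\mid N_k$. So the deduction you need cannot be recovered ``by exploiting the arithmetic of $N_k$'' --- it fails --- and the paper's proof uses a mechanism that never passes through your identity (it works one $\mathbb{F}_q$-line $\mathbb{F}_q\alpha X$ at a time and multiplies the resulting linear factors, whose product over the $\alpha$'s is $P_k(T)$ by a separate, purely multiplicative computation).
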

\begin{proof}
Any additive polynomial of the form $\tau-(\alpha X)^{q-1}$ with $\alpha \in \mathbb{F}_{q^k}^{\times}$ is a right factor of $\tau^k-X^{q^k-1}$. In total this gives $N_k$ distinct right factors, since $\#\{ \alpha^{q-1} \, | \, \alpha \in  \mathbb{F}_{q^k}^{\times} \}=N_k$. Clearly the kernel of such a right factor is contained in the kernel of $\tau^k-X^{q^k-1}$. This gives rise to $N_k(q-1)=q^k-1$ nonzero elements of the kernel of $\tau^k-X^{q^k-1}$. Therefore the union of the kernels of the right factors $\tau-(\alpha X)^{q-1}$, with $\alpha \in \mathbb{F}_{q^k}^ {\times}/\mathbb{F}_{q}^{\times}$, is equal to the kernel of $\tau^k-X^{q^k-1}$.

\vspace{.2cm}
\noindent
We claim that the kernel of $\tau-(\alpha X)^{q-1}$ is annihilated by $T-1+\alpha^{q^{j}-1}$. Indeed, $\phi_{T-1+\alpha^{q^{j}-1}}=-\tau^n+g\tau^j+\alpha^{q^{j}-1}$ can be written as  $\mu (\tau-(\alpha X)^{q-1})$ for some additive polynomial $\mu$ if and only if $-(\alpha X)^{q^n-1}+g(\alpha X)^{q^j-1}+\alpha^{q^{j}-1}=0$. This equality is satisfied, as can be seen by using Equation (\ref{eq:simplified}) and the fact that $\alpha \in \mathbb{F}_{q^k}^{\times}$.

Two right factors $\tau-(\alpha X)^{q-1}$ and $\tau-(\alpha' X)^{q-1}$ are equal if and only if $\alpha^{q-1}=\alpha'^{q-1}$. Therefore the proposition follows once we show that the product of $T-1+\alpha^{q^{j}-1}$ over all $\alpha \in \mathbb{F}_{q^k}^{\times}/\mathbb{F}_{q}^{\times}$ equals $(T-1)^{N_k}-(-1)^k$. This is the case, since
$$
\begin{array}{rclcl}
\prod_{\beta \in (\mathbb{F}_{q^k}^{\times})^{q-1}}(T-1+\beta^{N_j}) & = & \prod_{\beta \in (\mathbb{F}_{q^k}^{\times})^{q-1}}(T-1+\beta) & = & (-1)^k\prod_{\beta \in (\mathbb{F}_{q^k}^{\times})^{q-1}}(-T+1-\beta)\\
\\
 & = & (-1)^k\left((-T+1)^{N_k}-1\right) & = & (T-1)^{N_k}-(-1)^k.
 \end{array}
$$
In the first equality we used that since $\gcd(j,k)=1$, the map from $(\mathbb{F}_{q^k}^{\times})^{q-1}$ to itself given by $\beta \mapsto \beta^{N_j}$ is a bijection. In the third equality we used that $(\mathbb{F}_{q^k}^{\times})^{q-1}$ consists of exactly all elements of $\mathbb{F}_{q^k}^{\times}$ of multiplicative order dividing $N_k$.
\end{proof}

\vspace{.2cm}
\noindent
From the proof of Proposition \ref{prop:pk} we also see that $P_k(T)$ is the lowest degree polynomial annihilating the kernel of $\lambda=\tau^k-X^{q^k-1}$. For $k=1$, we have $P_1(T)=T$, so the kernel of the isogeny $\lambda$ is annihilated by multiplication with $T$.

\vspace{.2cm}
\noindent
Alternatively, instead of studying normalized rank $n$ Drinfeld modules, one can consider the corresponding $\bar{L}$-isomorphism classes. More precisely, we look at isomorphism classes of rank $n$ Drinfeld modules $\phi$ with $\phi_T=g_0\tau^n+g_1\tau^{n-1}+\cdots+g_{n-1}\tau+1$ such that
$$g_1=\ldots =g_{j-1}=g_{j+1}=\ldots =g_{n-1}=0.$$
Clearly every such class contains a normalized Drinfeld module, and two normalized Drinfeld modules $\phi, \phi' \in \mathfrak D_{n,j}$, with $\phi_T=-\tau^n+g\tau^{j} + 1$ and $\phi'_T=-\tau^n+g'\tau^{j} + 1$ are isomorphic over $\bar{L}$ if and only if $g'=g\cdot \lambda^{q^j-1}$ for some $\lambda\in \mathbb F_{q^n}^{\times}$.

Since $\gcd(n,j)=1$, the image of the map $\lambda \mapsto \lambda^{q^j-1}$ is $(\mathbb F_{q^n}^{\times})^{q-1}$. We see that $\phi$ and $\phi'$ as above are isomorphic if and only if
$$g'^{N_n}=g^{N_n}.$$
We denote $J(\phi)=g^{N_n}$, since it plays the analogous role of the $j$-invariant for normalized Drinfeld modules (also compare with \cite{potemine}).
It is now easy to relate $J(\phi)$ and $J(\psi)$ for Drinfeld modules $\phi$ and $\psi$ which are related by an isogeny of the form $\tau^k-X^{q^k-1}$. By Equation\,(\ref{eq:simplified}) we have
$$J(\phi)=g^{N_n}=\Bigl(\dfrac{X^{q^n-1}-1}{X^{q^j-1}}\Bigr)^{N_n}=\dfrac{(X^{q^n-1}-1)^{N_n}}{(X^{q^n-1})^{N_j}}$$
and similarly
$$J(\psi)=h^{N_n}=\dfrac{(X^{q^n-1}-1)^{N_n}}{(X^{q^n-1})^{q^kN_j}}.$$
Letting $Z=-X^{q^n-1}$, we have
$$J(\phi)=(-1)^k\dfrac{(Z+1)^{N_n}}{Z^{N_j}},\quad J(\psi)=(-1)^k\dfrac{(Z+1)^{N_n}}{Z^{q^kN_j}},$$
which is the same correspondence as the one described in Equation\,(\ref{eq:sepvarz}).


\bibliographystyle{99}

\noindent Alp Bassa\\
Sabanc{\i} University, MDBF\\
{\rm 34956} Tuzla, \.Istanbul, Turkey\\
bassa@sabanciuniv.edu\\

\noindent Peter Beelen\\
Technical University of Denmark, Department of Applied Mathematics and Computer Science\\
Matematiktorvet, Building 303B\\
DK-2800, Lyngby, Denmark\\
p.beelen@mat.dtu.dk\\

\noindent
Arnaldo Garcia\\
Instituto Nacional de Matem\'atica Pura e Aplicada, IMPA\\
Estrada Dona Castorina 110\\
22460-320, Rio de Janeiro, RJ, Brazil\\
garcia@impa.br\\

\noindent Henning Stichtenoth\\
Sabanc{\i} University, MDBF\\
{\rm 34956} Tuzla, \.Istanbul, Turkey\\
henning@sabanciuniv.edu\\

\end{document}